\documentclass{siamltex}

\usepackage{amssymb}
\usepackage{amsmath}
\usepackage{graphicx, enumerate}
\usepackage{psfrag}
\usepackage{array}
\usepackage[square, numbers]{natbib}
\usepackage[unicode]{hyperref}
\usepackage{hypernat}
\usepackage{subfigure}
\usepackage{algorithm, algorithmic}

% sets
\newcommand{\R}{\mathbb{R}}

\newcommand{\N}{\mathbb{N}}

\newcommand{\Rb}{\overline{\R}}

% function spaces

\newcommand{\Ls}[1]{\textup{L}^{#1}}
\renewcommand{\L}[1]{\textup{L}^{#1}(\Omega)}

\newcommand{\BV}{\textup{BV}(\Omega)}
\newcommand{\TV}{\textup{BV}}

% differential operators
\newcommand{\D}{\text{D}}

% math ops
\DeclareMathOperator*{\argmin}{\textnormal{argmin}}

\DeclareMathOperator*{\wlim}{{\it w}\textnormal{-lim}}

% stochastic

% misc 
%\newcommand{\supp}{\textnormal{supp}}

\newcommand{\eps}{\varepsilon}

\renewcommand{\ker}{\textnormal{ker}}
\newcommand{\ran}{\textnormal{ran}}

\newcommand{\set}[1]{\left\{ #1 \right\}}
\newcommand{\abs}[1]{\left| #1 \right|}
\newcommand{\norm}[1]{\left\| #1 \right\|}
\newcommand{\inner}[2]{\left\langle #1, #2 \right\rangle}

\newcommand{\bigo}{\mathcal{O}}
\newcommand{\ra}{\rightarrow}

\newcommand{\Gr}{\textnormal{Gr}}

\newcommand{\Reg}{\mathcal{R}}

 %

%

% theoremstyles 
% %\theoremstyle{plain}

% \newtheorem{thm}{Theorem}[section]
% \newtheorem{prop}[thm]{Proposition}
\newtheorem{ass}{Assumption}

% \newtheorem{assa}{Assumption}
% \renewcommand{\theassa}{\Alph{assa}}

% %\theoremstyle{definition}
\newtheorem{remark}[theorem]{Remark \upshape}
\newtheorem{alg}{Algorithm}
\newtheorem{example}{Example}
% %\newtheorem*{example*}{Example}

% %\newtheorem*{dfn*}{Definition}
% %\newtheorem*{alg*}{Algorithm}

% %\theoremstyle{remark}
% \newtheorem{prob}{Problem}

% envs

%\usepackage{refcheck}

\title{Morozov's principle for the augmented Lagrangian method
  applied to linear inverse problems}

\author{Klaus Frick\footnotemark[2] \and Dirk A. Lorenz\footnotemark[3]
  \and Elena Resmerita\footnotemark[4]}

% \address{Klaus Frick, Institute for Mathematical Stochastics,
% University of G\"ottingen, Goldschmidtstraß{\ss}e 7
% D-37077 G\"ottingen, Germany}
% \email{frick@math.uni-goettingen.de}
% \author{Dirk Lorenz}
% \address{Dirk Lorenz, Institute for Analysis and Algebra, TU Braunschweig,
%   D-38092 Braunschweig, Germany}
% \email{d.lorenz@tu-braunschweig.de}
% \author{Elena Resmerita}
% \address{Elena Resmerita, Industrial Mathematics Institute, 
% Johannes Kepler University Linz,
% Altenbergerstraße 69,
% A-4040 Linz, Austria}
% \email{elena.resmerita@jku.at}

\date{\today}
\begin{document}
\maketitle
\renewcommand{\thefootnote}{\fnsymbol{footnote}}
\footnotetext[2]{Institute for Mathematical
    Stochastics, University of G\"ottingen, Goldschmidtstra{\ss}e 7
    D-37077 G\"ottingen, Germany,
    \url{frick@math.uni-goettingen.de}}
\footnotetext[3]{Institute for Analysis and Algebra, TU Braunschweig,
   D-38092 Braunschweig, Germany,
   \url{d.lorenz@tu-braunschweig.de}}
\footnotetext[4]{Industrial Mathematics Institute, 
 Johannes Kepler University Linz,
 Altenbergerstraße 69,
 A-4040 Linz, Austria,
 \url{elena.resmerita@jku.at}
}
\renewcommand{\thefootnote}{\arabic{footnote}}
\pagestyle{myheadings}
\thispagestyle{plain}
\markboth{\uppercase{K.~Frick, D.A.~Lorenz, and
    E. Resmerita}}{\uppercase{Morozov's principle for the augmented
    Lagrangian method}}

\begin{abstract}
 The Augmented Lagrangian Method as an approach for regularizing inverse
 problems received much attention recently,
 e.g.~under the name Bregman iteration in imaging.
 This work shows convergence (rates) for this method when Morozov's discrepancy
 principle is chosen as a stopping rule. Moreover, error estimates for the
 involved sequence of subgradients are pointed out. 
 
 The paper studies implications of these results for particular examples motivated by applications in imaging. These include the
 total variation regularization as well as $\ell^q$ penalties with
 $q\in[1,2]$. It is shown that Morozov's principle implies convergence (rates)
 for the iterates with  respect to the metric of strict convergence and the
 $\ell^q$-norm, respectively.
\end{abstract}
\begin{AMS}
  65J22, 46N10, 49M05
\end{AMS}
%\subjclass[2000]{Primary primary class 65J22; Secondary secondary 46N10, 49M05}
%46Nxx Miscellaneous applications of functional analysis
%46N10 Applications in optimization, convex analysis, mathematical programming, economics
%
%49Mxx Methods of successive approximations
%49M05 Methods based on necessary conditions 
%
%65Jxx Numerical analysis in abstract spaces
%65J22 Inverse problems
\begin{keywords}
  Augmented Lagrangian Method, Bregman iteration, Morozov's
  discrepancy principle, regularization
\end{keywords}
%\keywords{Augmented Lagrangian Method, Bregman iteration, Morozov's
%  discrepancy principle, regularization}

\section{Introduction}\label{intro}  

A classical problem in optimization is the solution of
\begin{equation}\label{intro:constrained}
  J(u) \to \min \quad \text{ subject to } \quad Ku=g\,,  
\end{equation}
where $J:H_1\ra \R\cup\{\infty\}$ is a convex functional and $K: H_1 \ra H_2$ is
a linear and bounded operator between Hilbert spaces $H_1$ and $H_2$.
Solutions of problem \eqref{intro:constrained} are called \emph{$J$-minimizing
solutions} of the equation $Ku=g$.
% \begin{equation}\label{intro:opeqn}
%   Ku = g.
% \end{equation}

Of particular interest are ill-posed equations, that is, when the solution of
$Ku=g$ does not depend continuously on the data $g$ (as it is
e.g.~the case if $K$ has non-closed range). This becomes distinctly delicate if the data $g$ is
not available precisely but only noise-affected observations $g^\delta$ for which we assume
that we have the additional information $\norm{g^\delta-g}\leq \delta$.
% \begin{equation}\label{intro:delta}
%   \norm{g^\delta-g} \leq \delta.
% \end{equation}

It is a natural question to ask: ``When does a solution algorithm for the
optimization problem \eqref{intro:constrained} applied to perturbed data $g^\delta$ instead
of $g$, constitute a \emph{regularization method} for the ill-posed equation
$Ku=g$?'' In \cite{FriSch10} an affirmative answer was given for the
\emph{Augmented Lagrangian Method} (ALM), which in the context of
regularization is also known as the \emph{Bregman iteration} (see
\cite{OshBurGolXuYin05}). The ALM was introduced simultaneously by Hestenes
\cite{Hes69} and Powell \cite{Pow69} as an iterative solution method for \eqref{intro:constrained} and reads as follows:

\begin{alg}[the ALM]\label{intro:ala} 
  Let $p_0^\delta \in H_2$ and choose a sequence $\set{\tau_n}_{n\in\N}$ of
  positive parameters. For $n= 1,2,\ldots$ compute
  \begin{subequations}
    \begin{align}
      u_n^\delta & \in \argmin_{u \in H_1} \left( \frac{\tau_n}{2} \norm{Ku -
          g^\delta}^2 + J(u) - \inner{p_{n-1}^\delta}{Ku - g^\delta} \right)\quad\text{
        and }\label{intro:almobjective} \\
      p_n^\delta & = p_{n-1}^\delta + \tau_n (g^\delta - K
      u_n^\delta).\label{intro:almupdate}
    \end{align}
  \end{subequations}
\end{alg}
The name \emph{Augmented Lagrangian} stems from the fact that the functional
\begin{equation*}
  \mathcal{L}(u,p) = J(u) - \inner{p}{Ku-g^\delta}
\end{equation*} 
is the Lagrangian for~\eqref{intro:constrained} and the additional term
$\frac{\tau_n}{2} \norm{Ku - g^\delta}^2$ is an \emph{augmentation} of
$\mathcal{L}$ that fosters the fulfillment of the constraint. Hence, in the
limit, the augmentation term is supposed to vanish and the
variables $p_n^\delta$ shall tend to a Lagrange multiplier for the
problem~\eqref{intro:constrained}. 

It is well known that the Karush-Kuhn-Tucker conditions are necessary and
sufficient regularity conditions for the solutions of
\eqref{intro:constrained}, which guarantee existence of a saddle point of
$\mathcal{L}$. Thus, if there exists $u^\dagger\in H_1$ and
$p^\dagger\in H_2$ such that
\begin{equation*}
  Ku^\dagger = g\quad\text{ and }\quad K^*p^\dagger \in \partial J(u^\dagger)
\end{equation*}
then,
$\mathcal{L}(u^\dagger,p)\leq\mathcal{L}(u^\dagger,p^\dagger)\leq\mathcal{L}(u,p^\dagger)
$. It was pointed out in ~\cite{BurOsh04} that this coincides with the
standard source conditions in regularization theory.

As in \cite{FriSch10}, we will consider the ALM as a regularization method, that is, for stably computing
approximations of solutions of \eqref{intro:constrained} from perturbed
data $g^\delta$. With $\Reg_n: H_2 \ra H_1$ and $\Reg^*_n:H_2\ra H_2$
we denote the operators defined by
\begin{equation*}
  \Reg_n(g^\delta) := u_n^\delta\ \text{ and }\ \Reg^*_n(g^\delta) = p_n^\delta,\ \text{respectively}.
\end{equation*}
The paper \cite{FriSch10} came up with a characterization of parameter choice
rules $\Gamma:(0,\infty)\times H_2 \ra \N$ such that for each solution
$u^\dagger$ of \eqref{intro:constrained}
\begin{equation*}
  \Reg_{\Gamma(\delta_k, g_k)}(g_k) \ra u^\dagger\quad\text{ as }\quad
  \norm{g-g_k} =: \delta_k \ra 0
\end{equation*}
in an appropriate sense. Under a standard source condition, it showed also convergence rates for a class of stopping rules $\Gamma(\delta,y^\delta)$  for which $\Gamma(\delta,y^\delta)\ra \infty$, as $\delta\ra 0$.
We pursue further that study and mainly show that Morozov's discrepancy principle does belong to the above mentioned class. Moreover, we investigate the degenerate case of the discrepancy principle, that is when $\{\Gamma(\delta,g^\delta)\}$ has finite accumulation points. Note that the complex challenge of choosing a right regularization parameter when dealing with  stabilization methods for improperly posed problems is frequently approached via Morozov's rule due to its natural heuristic  motivation. Namely, this rule selects a parameter by comparing the residual $\norm{K u_n^\delta - g^\delta}$ with the presumably known noise level $\delta$ - see, e.g. \cite[Ch. 4]{EngHanNeu96}.
% Interestingly, the ALM iterates in this case not only cluster to solutions of the equation as $\delta\ra 0$, but regularity of these solutions is granted as well, in the sense that they automatically satisfy the source condition and thus, lead to convergence rates.  

In \cite{FriSch10}, the implications of general convergence analysis for the ALM
were emphasized for the case of quadratic functionals $J$ (cf. Example
\ref{defs:quad}). In particular, the authors pointed out that in this case the
ALM is equivalent to the \emph{Tikhonov-Morozov} method (cf. \cite{Gro07}). Here, we will study in more
detail two choices for $J$ that are especially appealing for inverse
problems occurring in \emph{imaging}: 
\begin{enumerate}[i)]
 \item \emph{Total-variation regularization} (cf.
  \cite{RudOshFat92,BurOsh04,BurResHe07}). Let $H_1 = \L{2}$
  for a bounded
  domain $\Omega \subset \R^2$ and consider the function
  \begin{equation}\label{intro:tv}
    J(u) = \begin{cases}
\abs{\D u}(\Omega) & \text{ if } u \in \BV \\
+\infty & \text{ else}.
\end{cases}
  \end{equation}
  Here, $\abs{\D u}(\Omega)$ denotes the total-variation of the (measure-valued)
  distributional derivative of $u$.
  \item \emph{Sparse regularization} (cf. \cite{DauDefDeM04,Lor08,GraHalSch08}).
  Let $H_1=\ell^2$ and
  \begin{equation}\label{intro:sparse}
    J(u) =
    \begin{cases}
      \sum_{k\in\N} \abs{u_k}^q & \text{if }\ u\in\ell^q\\
      +\infty & \text{otherwise}
    \end{cases}
  \end{equation}
  with $1\leq q\leq 2$.
\end{enumerate}
This work is organized as follows. Section 2 presents the main notions and
notation, while Section 3 recalls several results in \cite{FriSch10} and proposes
some extensions of them. For instance, upper bounds for the Bregman distance
between the subgradients of the objective functional $J$ in
\eqref{intro:constrained} corresponding to the iterates and the solution,
respectively, are obtained. Section 4 shows that the ALM together with Morozov's
discrepancy principle lead to stable approximations for the operator equation
both in the nondegenerate and degenerate cases. The results are applied for the
total variation setting in Section 5, by underlying  strict convergence (rates)
for the primal variables. Section 6  summarizes the knowledge on the ALM for the
sparsity regularization setting, i.e.~convergence rates for the primal variables
with respect to the $\ell^q$-norm and for the subgradients of these variables
with respect to Bregman distances ($1\leq q\leq 2$) and  dual norms ($1<q<2$).

%%%%%%%%%%%%%%%%%%%%%%%%%%%%%%%%%%%%%%%%%%%%%%%%%%%%%%%%%%%%%%%%%%%%%%%%%%%%%%%
\section{Basic Definitions and some Notation}\label{defs}

\subsection{Basic Assumptions} Throughout this paper we will assume that $H_1$
and $H_2$ are separable Hilbert spaces with inner products
$\inner{\cdot}{\cdot}$ and norms $\norm{\cdot}$ (not further specified since the meaning is always
clear from the context). We will frequently make use of \emph{Young's
inequality}, which states that for all $u,v \in H_1$ and $\gamma > 0$ one has
that
\begin{equation*}
  \abs{\inner{u}{v}}\leq \frac{1}{2\gamma}\norm{u}^2 +
  \frac{\gamma}{2}\norm{v}^2.
\end{equation*}

We assume further that $K:H_1 \ra H_2$ is a linear
and bounded operator and that $J:H_1\ra \Rb = \R\cup\set{\infty}$ is convex,
lower semi-continuous (l.s.c.) and proper, that is, the domain
\begin{equation*}
  D(J) = \set{u \in H_1~:~ J(u) < \infty}
\end{equation*}
is non-empty. In order to guarantee that $J$-minimizing solutions of
$Ku=g$ exist and that Algorithm \ref{intro:ala} is well defined,
we need to impose additional restrictions (cf. \cite[Lem. 3.1]{FriSch10}):

\begin{ass}\label{defs:compactnessass}
The sub-level sets of the functional
\begin{equation*}
  u \mapsto \norm{Ku}^2 + J(u)
\end{equation*}
are sequentially pre-compact with respect to the weak topology on $H_1$. That
is, for every $c\in\R$, every sequence $\set{u_n}_{n\in\N}$ contained in the
sub-level set
\begin{equation*}
  \Lambda(c) = \set{u \in H_1~:~ \norm{Ku}^2 + J(u) \leq c}
\end{equation*}
has a weakly convergent subsequence in $H_1$.
\end{ass}

Moreover, we will assume that $\set{\tau_n}_{n\in\N}$ in Algorithm \ref{intro:ala}
is a fixed sequence of positive regularization parameters which can be considered
as step-sizes for the iterations.  We will make use of the quantity
\begin{equation*}\label{summary:tn_taun}
  t_n := \sum_{k=1}^n \tau_k.
\end{equation*}
The case of constant parameter $\tau_n = \tau$ is known as stationary augmented
Lagrangian method and leads to $t_n = n\tau$. We will only require that
\begin{equation}\label{infty}
  \lim_{n\ra\infty} t_n =+ \infty\quad\text{ and }\quad \sup_{k\in\N}\tau_k
  =:\bar\tau < \infty,
\end{equation}
i.e., the $\tau_n$'s do not decay too quickly and stay bounded.

Finally, we will assume that $g\in H_2$ is an \emph{attainable} element, that
is, there exists a $u\in D(J)$ such that $Ku=g$. By
$g^\delta\in H_2$ we always denote a perturbed version of $g$ satisfying
$\norm{g^\delta-g}\leq \delta$. For $k\in\N$, we will abbreviate $g_k:= g^{\delta_k}$ with
$\delta_k \ra 0$ as $k\ra \infty$.

\subsection{Convex Analysis} In the course of this paper we will frequently
 use some tools from convex analysis. A standard reference in this respect is \cite{EkeTem76}.

The {\it subdifferential} (or generalized derivative)  $\partial J(u)$ of $J$ at
$u$ is the set of all elements $\xi \in H_1$ satisfying
\begin{equation*}
  J(v) - J(u) - \inner{\xi}{v-u} \geq 0.
\end{equation*}
The \emph{domain} $D(\partial J)$ of the subgradient consists of all $u\in H_1$
for which $\partial J (u) \neq \emptyset$. Finally, we define the \emph{graph} of
$\partial J$ as
\begin{equation*}
  \Gr(\partial J) := \set{(u,\xi) \in H_1\times H_1 ~:~ \xi\in \partial J (u)}.
\end{equation*}
According to \cite[Chap.~I Cor.~5.1]{EkeTem76}, the set $\Gr(\partial J)$ is
sequentially closed with respect to the weak-strong topology on $H_1\times H_1$. That is, if
the sequence $\set{(u_n,v_n)}_{n \in \N}$ of elements in $\Gr(\partial J)$
satisfies that $u_n$ converges weakly to $u$ and $v_n$ converges strongly to $v$,
then $(u,v) \in \Gr(\partial J)$.

The functional $J^*: H_1 \to \Rb$ denotes the
{\it Legendre-Fenchel transform} (or the dual functional) of $J$, which is
defined by
\begin{equation*}\label{defs:legfen}
  J^*(v) := \sup_{u \in H_1} (\inner{v}{u} - J(u)).
\end{equation*}
Since $J^*$ is the pointwise supremum of
affine functions it is convex, l.s.c.~ and
proper~\cite[Chap.~I, Prop.~3.1]{EkeTem76}.
Moreover, one has \cite[Chap.~I, Cor.~5.2.]{EkeTem76}
\begin{equation*}\label{defs:subgrfenchel}
  v\in\partial J(u) \Leftrightarrow u \in \partial J^*(v).
\end{equation*}
Furthermore, it follows from the definition of the subgradient that
\begin{equation*}
  u \in \partial J^*(K^*p)\Rightarrow Ku\in \partial(J^*\circ K^*)(p).
\end{equation*}

For $u \in D(\partial J)$ and $v\in D(J)$, the {\it Bregman
distance} of $J$ between $u$ and $v$ with respect to $\xi \in \partial J(u)$
is defined by
\begin{equation*}
  D_J^{\xi}(v,u) = J(v) - J(u) - \inner{\xi}{v-u}.
\end{equation*}
We will skip the superscript $\xi$, if the choice of the subgradient is
obvious. If additionally $v\in D(\partial J)$ and $\eta
\in \partial J(v)$, we further define the \emph{symmetric} Bregman distance, by
\begin{equation*}
  D_J^{\text{sym}}(v,u) = D_J(v,u) + D_J(u,v) = \inner{\eta - \xi}{v - u}.
\end{equation*}
Note that the convexity of $J$ implies that $D_J$ and $D_J^{\text{sym}}$
are always non-negative.

\begin{example}\label{defs:quad}
  Let $H$ be a Hilbert space and $L:D(L)\subset H_1\ra H$ be a linear
  and closed operator with dense domain $D(L)$. Then, the \emph{quadratic
  functional}
  \begin{equation*}
      J(u) = \begin{cases} \frac{1}{2}\norm{Lu}^2 & \text{ if } u\in D(L) \\
             +\infty & \text{ else.}
             \end{cases}
  \end{equation*}
  is convex, lower semi-continuous and proper. Moreover, for $u\in D(\partial J)
  = D(L^*L)$ the subgradient $\partial J(u)$ coincides with the set
  $\set{L^*Lu}$ (cf. \cite[Lem. 2.4]{FriSch10}). This finally implies that 
  \begin{equation*}
      D_J^{\text{sym}}(v,u) = \norm{L(v-u)}^2.
  \end{equation*}
\end{example}

\subsection{Source Condition}\label{defs:sc}
It is well known, that
regularization methods for the reconstruction of a solution $u^\dagger$ of \eqref{intro:constrained} in
general converge arbitrarily slow, unless further regularity is imposed on
$u^\dagger$~\cite{EngHanNeu96}. In the general setup presented in this paper, this is usually done
in terms of the standard \emph{source condition}~\cite{BurOsh04},
that is, there exists an
element $p^\dagger \in H_2$ (the source element) such that
\begin{equation}\label{defs:sceqn}
  K^*p^\dagger \in \partial J(u^\dagger).
\end{equation}

%%%%%%%%%%%%%%%%%%%%%%%%%%%%%%%%%%%%%%%%%%%%%%%%%%%%%%%%%%%%%%%%%%%%%%%%%%%%%%%
\section{Summary and extensions of previous results}\label{summary}

In this section we summarize the results on regularization by means of the ALM
as presented in \cite{FriSch10}. We further derive an extended error estimate
that allows for convergence rates of the sequence $K^*p_n^\delta$ in the
Bregman-distance associated with the Fenchel conjugate $J^*$.

The dual characterization of the ALM by the
\emph{proximal point method} plays a central role in the convergence analysis in
\cite{FriSch10}. This observation dates back to the work of Rockafellar in
\cite{Roc74}. In the current context, defining $G:H_2 \times H_2 \ra \R$ by
\begin{equation}\label{summary:descent}
  G(p,g) = J^*(K^*p) - \inner{p}{g},
\end{equation}
it holds (cf. \cite[Prop. 4.2]{FriSch10})
\begin{equation}\label{summary:dual}
  p_n^\delta = \argmin_{p \in H_2}\left( \frac{1}{2} \norm{p -
  p_{n-1}^\delta}^2 + \tau_n G(p;g^\delta) \right).
\end{equation}
The basis of the results in \cite{FriSch10} is the following estimate on the
iterates in \eqref{summary:dual} which was established by G\"uler
in~\cite[Lem.~2.2]{Gue91}:

\begin{proposition}\label{summary:gueler}
For all $n\in\N$ and all $p\in H_2$ one has
\begin{equation}\label{summary:guelest}
  G(p_n^\delta,g^\delta) - G(p,g^\delta)
  \leq \frac{\norm{p-p_0^\delta}^2}{2 t_n} - \frac{\norm{p-p_n^\delta}^2}{2t_n} -
  \frac{t_n \norm{p_n^\delta -p_{n-1}^\delta}^2}{2\tau_n^2}.
\end{equation}
\end{proposition}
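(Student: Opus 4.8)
The plan is to read the optimality condition off the proximal characterization \eqref{summary:dual} and then to run a telescoping (Lyapunov) argument. Throughout $g^\delta$ is fixed, so I write $\partial G(p_n^\delta)$ for the subdifferential of the convex map $p\mapsto G(p,g^\delta)$. The strongly convex minimization in \eqref{summary:dual} gives $0\in(p_n^\delta-p_{n-1}^\delta)+\tau_n\,\partial G(p_n^\delta)$, hence
\[
  s_n:=\frac{p_{n-1}^\delta-p_n^\delta}{\tau_n}\in\partial G(p_n^\delta).
\]
From the subgradient inequality for $s_n$ I extract two facts. Testing against an arbitrary $p$ and using the polarization identity $2\inner{p_{n-1}^\delta-p_n^\delta}{p_n^\delta-p}=\norm{p_{n-1}^\delta-p}^2-\norm{p_n^\delta-p}^2-\norm{p_{n-1}^\delta-p_n^\delta}^2$ yields the one-step estimate
\[
  \tau_n\bigl(G(p_n^\delta,g^\delta)-G(p,g^\delta)\bigr)\leq\tfrac12\norm{p_{n-1}^\delta-p}^2-\tfrac12\norm{p_n^\delta-p}^2-\tfrac12\norm{p_{n-1}^\delta-p_n^\delta}^2 .
\]
Testing instead against $p=p_{n-1}^\delta$ gives the monotone-decrease estimate $G(p_{n-1}^\delta,g^\delta)-G(p_n^\delta,g^\delta)\geq\tau_n^{-1}\norm{p_n^\delta-p_{n-1}^\delta}^2\geq 0$.

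The non-obvious ingredient, and the step I expect to be the crux, is the monotonicity of the scaled step lengths $b_n:=\norm{s_n}=\norm{p_n^\delta-p_{n-1}^\delta}/\tau_n$. This follows from monotonicity of the subdifferential of the convex functional $G(\cdot,g^\delta)$: since $s_n\in\partial G(p_n^\delta)$ and $s_{n-1}\in\partial G(p_{n-1}^\delta)$ one has $\inner{s_n-s_{n-1}}{p_n^\delta-p_{n-1}^\delta}\geq 0$, and substituting $p_n^\delta-p_{n-1}^\delta=-\tau_n s_n$ yields $\inner{s_{n-1}}{s_n}\geq\norm{s_n}^2$; Cauchy--Schwarz then forces $b_{n-1}\geq b_n$. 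This is exactly what will let me keep only the last step term rather than the full sum of all step terms.

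With these in hand I would introduce, for the fixed $p$, the Lyapunov functional
\[
  \Phi_n:=t_n\bigl(G(p_n^\delta,g^\delta)-G(p,g^\delta)\bigr)+\tfrac12\norm{p-p_n^\delta}^2+\tfrac12\,t_n^2\,b_n^2
\]
(with $t_0=0$, so that $\Phi_0=\tfrac12\norm{p-p_0^\delta}^2$) and show that $\Phi_n\leq\Phi_{n-1}$. Writing $t_n=t_{n-1}+\tau_n$ and splitting the difference of the first terms as $t_{n-1}\bigl(G(p_n^\delta,g^\delta)-G(p_{n-1}^\delta,g^\delta)\bigr)+\tau_n\bigl(G(p_n^\delta,g^\delta)-G(p,g^\delta)\bigr)$, I bound the first summand by the decrease estimate and the second by the one-step estimate. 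The quadratic terms $\pm\tfrac12\norm{p-p_n^\delta}^2$ and $\pm\tfrac12\norm{p_{n-1}^\delta-p}^2$ cancel exactly; after replacing $b_{n-1}^2\geq b_n^2$ the remaining coefficient of $b_n^2$ is $-t_{n-1}\tau_n-\tfrac12\tau_n^2+\tfrac12(t_n^2-t_{n-1}^2)$, which vanishes because $t_n^2-t_{n-1}^2=2t_{n-1}\tau_n+\tau_n^2$. Hence $\Phi_n\leq\Phi_0=\tfrac12\norm{p-p_0^\delta}^2$, and dividing by $t_n$ and recalling $t_n^2 b_n^2=t_n\,\norm{p_n^\delta-p_{n-1}^\delta}^2/\tau_n^2\cdot t_n$ delivers \eqref{summary:guelest}. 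The main work is the bookkeeping in this telescoping cancellation, while the conceptual key is the step-length monotonicity $b_{n-1}\geq b_n$.
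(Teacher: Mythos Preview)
Your argument is correct. The paper, however, does not actually prove this proposition: it is quoted from G\"uler~\cite[Lem.~2.2]{Gue91} without reproducing the proof, so there is no in-paper argument to compare against. What you have written is essentially G\"uler's original Lyapunov proof for the proximal point method, adapted to variable step sizes $\tau_n$: the one-step descent from the prox optimality condition, the monotone decrease of the scaled increments $b_n=\norm{p_n^\delta-p_{n-1}^\delta}/\tau_n$ via monotonicity of $\partial G$, and the telescoping of $\Phi_n$. The bookkeeping checks out; in particular the base case $n=1$ needs no $b_0$ since $t_0=0$ kills that term, and the coefficient identity $-t_{n-1}\tau_n-\tfrac12\tau_n^2+\tfrac12(t_n^2-t_{n-1}^2)=0$ is exactly $t_n=t_{n-1}+\tau_n$. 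Dividing $\Phi_n\le\Phi_0$ by $t_n$ gives \eqref{summary:guelest} verbatim.
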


This result leads to the general convergence result \cite[Thm. 5.3]{FriSch10}:
\begin{theorem}\label{summary:conv}
  Assume that the stopping rule $\Gamma:(0,\infty)\times H_2
  \ra \N$ satisfies
  \begin{equation}\label{summary:abbrev}
    \lim_{k\ra\infty} \delta_k^2 t_{\Gamma(\delta_k, g_k)} = 0 \quad \text{ and
    }\quad \lim_{k\ra\infty}  t_{\Gamma(\delta_k, g_k)} =+\infty.
  \end{equation}
  Then, the sequence $\set{\Reg_{\Gamma(\delta_k,
  g_k)}(g_k)}_{k\in\N}$ is bounded and each weak cluster
  point is a $J$-minimizing solution of $Ku=g$. Additionally,
  with $\xi_k = K^* \Reg^*_{\Gamma(\delta_k,
  g_k)}(g_k) \in \partial J(\Reg_{\Gamma(\delta_k,
  g_k)}(g_k))$  it holds
  \begin{equation}\label{summary:conv_convfunc}
    \lim_{k\to \infty} J(\Reg_{\Gamma(\delta_k,
    g_k)}(g_k)) = J(u^\dagger)\ \text{ and }\ \lim_{k\to \infty}
    D^{\xi_k}_J(u^\dagger,\Reg_{\Gamma(\delta_k, g_k)}(g_k)) = 0,
  \end{equation}
  and the residuum satisfies the rate
  \begin{equation}\label{summary:conv_convimg}
    \norm{K\Reg_{\Gamma(\delta_k,
    g_k)}(g_k) - g} = \bigo(t_{\Gamma(\delta_k,
    g_k)}^{-1\slash 2}).
  \end{equation}
\end{theorem}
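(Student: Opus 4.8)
The plan is to reduce every assertion about the primal iterates to a statement about the dual objective $G(\cdot,g)$ and then to exploit Güler's estimate (Proposition \ref{summary:gueler}). Abbreviate by $u_k$ and $p_k$ the iterates at the stopping index $\Gamma(\delta_k,g_k)$, and set $\xi_k=K^*p_k$. First I would record the subgradient identity: writing out the first-order optimality condition for the minimizer in \eqref{intro:almobjective} and substituting the update \eqref{intro:almupdate} gives $K^*p_n^\delta\in\partial J(u_n^\delta)$, so $\xi_k$ is admissible, and by the equivalence $v\in\partial J(u)\Leftrightarrow u\in\partial J^*(v)$ also $u_n^\delta\in\partial J^*(K^*p_n^\delta)$. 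Using the Fenchel--Young equality, the definition of $G$, and $Ku^\dagger=g$, I would then establish the fundamental identity
\begin{equation*}
  D_J^{\xi_n}(u^\dagger,u_n^\delta)=J(u^\dagger)+G(p_n^\delta,g),
\end{equation*}
which turns the target limit $D_J^{\xi_k}(u^\dagger,u_k)\ra0$ into the statement that $G(p_n^\delta,g)\ra -J(u^\dagger)$, i.e.\ that the dual iterates become asymptotically optimal.

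Next I would feed an arbitrary $p\in H_2$ into Proposition \ref{summary:gueler}, convert from noisy to exact data via $G(p,g^\delta)=G(p,g)+\inner{p}{g-g^\delta}$, and absorb the resulting cross term $\inner{p-p_n^\delta}{g-g^\delta}\le\delta\norm{p-p_n^\delta}$ into the negative quadratic $-\norm{p-p_n^\delta}^2/(2t_n)$ by Young's inequality with weight $t_n$. Discarding the remaining nonpositive terms yields, for every $p$,
\begin{equation*}
  G(p_n^\delta,g)\le G(p,g)+\frac{\norm{p-p_0^\delta}^2}{2t_n}+\frac{t_n\delta^2}{2}.
\end{equation*}
Along the stopping index the two hypotheses \eqref{summary:abbrev} make the last two summands vanish, so $\limsup_k G(p_k,g)\le G(p,g)$ for each fixed $p$, hence $\limsup_k G(p_k,g)\le\inf_p G(p,g)$.

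The crux is then to identify $\inf_p G(p,g)=-J(u^\dagger)$, i.e.\ the absence of a duality gap, \emph{without} assuming the source condition \eqref{defs:sceqn}. I would obtain this from Assumption \ref{defs:compactnessass}: the value function $y\mapsto\inf\set{J(u):Ku=g+y}$ is convex, and applying the sub-level-set precompactness to near-minimizers, together with weak lower semicontinuity of $J$ and weak continuity of $K$, shows it is lower semicontinuous at $0$; by the duality theory of \cite{EkeTem76} this is exactly what forces zero gap. Combined with the two previous displays and $D_J\ge0$, this gives $\limsup_k D_J^{\xi_k}(u^\dagger,u_k)\le J(u^\dagger)+\inf_pG(p,g)=0$, which is the second limit in \eqref{summary:conv_convfunc}. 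For the residual rate \eqref{summary:conv_convimg} I would instead retain the term $t_n\norm{p_n^\delta-p_{n-1}^\delta}^2/(2\tau_n^2)$, which by the update equals $\tfrac{t_n}{2}\norm{Ku_n^\delta-g^\delta}^2$, bound $-G(p_n^\delta,g)\le J(u^\dagger)$ by weak duality, and divide by $t_n$; then $\norm{Ku_n^\delta-g}\le\norm{Ku_n^\delta-g^\delta}+\delta$ together with \eqref{summary:abbrev} yields the $\bigo(t_n^{-1\slash2})$ rate.

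Finally, for boundedness and the characterization of cluster points I would verify that $\set{u_k}$ remains in a fixed sub-level set $\Lambda(c)$: the residual rate bounds $\norm{Ku_k}$, while a quadratic inequality extracted from Güler's estimate (using $-G(p_n^\delta,g^\delta)\le J(u^\dagger)+\delta\norm{p_n^\delta}$ and solving for $\norm{p_n^\delta}$) gives $\norm{p_k}=\bigo(t_{\Gamma(\delta_k,g_k)}^{1\slash2})$, whence the fundamental identity bounds $J(u_k)$ from above. Assumption \ref{defs:compactnessass} then provides weak cluster points $\bar u$; since the residual tends to $0$ and $K$ is weakly continuous, $K\bar u=g$, and weak lower semicontinuity of $J$ combined with the already established $D_J\ra0$ forces $\bar u$ to be $J$-minimizing and $J(u_k)\ra J(u^\dagger)$. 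I expect the main obstacle to be precisely the gap-free identity $\inf_pG(p,g)=-J(u^\dagger)$ under Assumption \ref{defs:compactnessass} alone, and, as a secondary technical point, the calibration of Young's inequality so that the noise enters only through the product $t_n\delta^2$, which is what makes the two conditions in \eqref{summary:abbrev} the natural hypotheses.
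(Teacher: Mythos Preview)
The paper does not prove this theorem; it is quoted as \cite[Thm.~5.3]{FriSch10} (the opening of Section~\ref{summary} announces that the section ``summarize[s] the results on regularization by means of the ALM as presented in \cite{FriSch10}''), so there is no in-paper argument to compare against directly.

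Your sketch is a faithful reconstruction of the argument underlying that cited result: the dual identity $D_J^{\xi_n}(u^\dagger,u_n^\delta)=J(u^\dagger)+G(p_n^\delta,g)$ via Fenchel--Young, G\"uler's estimate converted from $g^\delta$ to $g$ by Young's inequality with weight $t_n$, and the zero-gap statement $\inf_p G(p,g)=-J(u^\dagger)$ obtained from lower semicontinuity of the value function under Assumption~\ref{defs:compactnessass}. That last point is precisely the content of \cite[Lem.~4.1]{FriSch10}, which the present paper also invokes. The quadratic bound $\norm{p_k}=\bigo(t_{n_k}^{1/2})$ and the residual rate are argued correctly.

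One step deserves to be made explicit. The convergence $J(u_k)\to J(u^\dagger)$ does not follow from $D_J^{\xi_k}(u^\dagger,u_k)\to 0$ and weak lower semicontinuity alone: expanding the Bregman distance gives
\[
J(u_k)=J(u^\dagger)+\inner{p_k}{Ku_k-g}-D_J^{\xi_k}(u^\dagger,u_k),
\]
so you need $\inner{p_k}{Ku_k-g}\to 0$. With only $\norm{p_k}=\bigo(t_{n_k}^{1/2})$ and the stated $\bigo(t_{n_k}^{-1/2})$ residual rate the product is merely bounded. But your zero-gap argument actually delivers the sharper conclusion $t_{n_k}\norm{Ku_k-g_k}^2\to 0$ (take $p=p_\eps$ with $G(p_\eps,g)+J(u^\dagger)<\eps$ in the residual estimate, let $k\to\infty$, then $\eps\to 0$); combined with $\norm{p_k}=\bigo(t_{n_k}^{1/2})$ and $\delta_k=o(t_{n_k}^{-1/2})$ this forces $\norm{p_k}\norm{Ku_k-g}\to 0$. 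Once you spell this out, the argument is complete.
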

As indicated in Section \ref{defs:sc}, the speed of convergence in
\eqref{summary:conv_convfunc} can be arbitrarily slow, unless one imposes regularity restrictions on the
true solutions of $Ku=g$.
 We recall below Theorem 6.3 from \cite{FriSch10} in this respect.

\begin{theorem}\label{summary:ratesest} 
  Assume that the stopping rule $\Gamma:(0,\infty)\times H_2
  \ra \N$ satisfies $\lim_{k\ra\infty}  t_{\Gamma(\delta_k, g_k)} = +\infty$. Then the following two conditions are equivalent: 
  
  (i) There exists a $J$-minimizing solution $u^\dagger$ of $Ku=g$ that satisfies the source
  condition \eqref{defs:sceqn} with source element
  $p^\dagger\in H_2$ and there exists $C\in\R$ such that
  \begin{equation}\label{bound_strong}
    \delta_k t_{\Gamma(\delta_k, g_k)}\leq C.
  \end{equation}
  
  (ii) For $k\ra\infty$, one has 
  \begin{equation}\label{summary:conv_convimg1}
    \norm{K\Reg_{\Gamma(\delta_k,
        g_k)}(g_k) - g} = \bigo(t_{\Gamma(\delta_k,
      g_k)}^{-1})\ \text{ and }\ \left\|\Reg^*_{\Gamma(\delta_k,
        g_k)}(g_k)\right\|=\bigo(1).
  \end{equation}
  Additionally, if (i) or (ii) holds, then 
  \begin{equation*}\label{summary:conv_convfunc1}
    D^{K^*p^\dagger}_J(\Reg_{\Gamma(\delta_k, g_k)}(g_k),u^\dagger) = \bigo(t_{\Gamma(\delta_k,
      g_k)}^{-1})
  \end{equation*}
  and each cluster point of $\left\{\Reg^*_{\Gamma(\delta_k,
      g_k)}(g_k)\right\}$ is a minimizer of $G(\cdot,g)$.
\end{theorem}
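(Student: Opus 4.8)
The plan is to reduce everything to the dual functional $G(\cdot,g)$ and to exploit Güler's estimate (Proposition \ref{summary:gueler}) for the forward implication and a weak-compactness argument for the reverse one. First I would record two structural facts. Writing the optimality condition for the primal subproblem \eqref{intro:almobjective} and substituting the update \eqref{intro:almupdate} gives $K^*p_n^\delta\in\partial J(u_n^\delta)$; by the conjugacy relation $v\in\partial J(u)\Leftrightarrow u\in\partial J^*(v)$ and the chain-rule implication recalled in Section \ref{defs}, this yields $Ku_n^\delta\in\partial(J^*\circ K^*)(p_n^\delta)$, i.e. $Ku_n^\delta-g\in\partial G(\cdot,g)(p_n^\delta)$. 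Secondly, the source condition \eqref{defs:sceqn} for $u^\dagger$ with element $p^\dagger$ is, since $Ku^\dagger=g$, equivalent to $g\in\partial(J^*\circ K^*)(p^\dagger)$, that is, to $p^\dagger$ being a minimizer of the convex, l.s.c. functional $G(\cdot,g)$. Throughout I abbreviate $n_k=\Gamma(\delta_k,g_k)$ and $r_n=g^\delta-Ku_n^\delta=(p_n^\delta-p_{n-1}^\delta)/\tau_n$.

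For the implication (i)$\Rightarrow$(ii) I would insert $p=p^\dagger$ into \eqref{summary:guelest}, decompose the left-hand side as $G(p_n^\delta,g^\delta)-G(p^\dagger,g^\delta)=D^{g}_{J^*\circ K^*}(p_n^\delta,p^\dagger)+\inner{p_n^\delta-p^\dagger}{g-g^\delta}$, and bound the cross term by Young's inequality with weight $t_n$, which exactly cancels the term $\norm{p^\dagger-p_n^\delta}^2/(2t_n)$ on the right. Dropping the nonnegative Bregman term leaves
\[ \frac{t_n}{2}\norm{r_n}^2\le\frac{\norm{p^\dagger-p_0^\delta}^2}{2t_n}+\frac{t_n\delta^2}{2}, \]
hence $\norm{Ku_n^\delta-g}\le\norm{p^\dagger-p_0^\delta}/t_n+2\delta$. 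Keeping the cross term unestimated instead yields a quadratic inequality in $a=\norm{p_n^\delta-p^\dagger}$ whose solution is $a\le 2t_n\delta+\norm{p^\dagger-p_0^\delta}$. Under \eqref{bound_strong} the two right-hand sides are $\bigo(t_{n_k}^{-1})$ and $\bigo(1)$, which is precisely (ii). The additional Bregman rate then follows from $Ku^\dagger=g$, $K^*p^\dagger\in\partial J(u^\dagger)$, $K^*p_n^\delta\in\partial J(u_n^\delta)$ via
\[ D^{K^*p^\dagger}_J(u_n^\delta,u^\dagger)\le D^{\text{sym}}_J(u_n^\delta,u^\dagger)=\inner{p_n^\delta-p^\dagger}{Ku_n^\delta-g}\le\norm{p_n^\delta-p^\dagger}\,\norm{Ku_n^\delta-g}=\bigo(1)\cdot\bigo(t_{n_k}^{-1}). \]

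For the reverse implication (ii)$\Rightarrow$(i), the source condition is obtained by compactness: the boundedness $\norm{p_{n_k}^{\delta_k}}=\bigo(1)$ furnishes a weakly convergent subsequence with limit $\bar p$; since $Ku_{n_k}^{\delta_k}-g\in\partial G(\cdot,g)(p_{n_k}^{\delta_k})$ and $Ku_{n_k}^{\delta_k}-g\to0$ strongly, the weak--strong closedness of the graph of the maximal monotone operator $\partial G(\cdot,g)$ forces $0\in\partial G(\cdot,g)(\bar p)$, so $\bar p$ minimizes $G(\cdot,g)$ and is a source element; the same argument shows every cluster point minimizes $G(\cdot,g)$, giving the final assertion of the theorem.

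The hard part is extracting the parameter bound \eqref{bound_strong} from (ii). The natural route is the telescoped update
\[ p_n^\delta-p_0^\delta=\sum_{j=1}^{n}\tau_j r_j=t_n(g^\delta-g)+\sum_{j=1}^{n}\tau_j\,(g-Ku_j^\delta), \]
which rearranges to $t_n\norm{g^\delta-g}\le\norm{p_n^\delta}+\norm{p_0^\delta}+\sum_{j=1}^{n}\tau_j\norm{g-Ku_j^\delta}$; with $\norm{p_{n_k}^{\delta_k}}=\bigo(1)$ the bound $\delta_k t_{n_k}\le C$ (read with $\delta_k\simeq\norm{g_k-g}$) would follow once the accumulated residual sum is controlled. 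I expect this control to be the main obstacle, because for a fixed noise level the exact-data residuals $\norm{g-Ku_j^{\delta_k}}$ do not decay along $j$ but plateau near $\norm{g_k-g}$, so a naive summation is circular. The plan to break the circularity is to separate the noise-driven part $t_n(g^\delta-g)$ from the approximation part and to invoke the genuine $\bigo(t_j^{-1/2})$ decay of the exact-data residuals from Theorem \ref{summary:conv} together with the source-condition-improved rate established above; this is the step where the equivalence is truly earned.
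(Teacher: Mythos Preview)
First, note that the paper does not prove Theorem~\ref{summary:ratesest}: it is explicitly recalled from \cite[Thm.~6.3]{FriSch10} and stated without proof, so there is no in-paper argument to compare your proposal against directly.

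On substance: your treatment of (i)$\Rightarrow$(ii), of the additional Bregman-distance rate, and of the clause about cluster points of $\{\Reg^*_{\Gamma(\delta_k,g_k)}(g_k)\}$ is sound and follows the expected route through G\"uler's estimate and the weak--strong closedness of $\partial G(\cdot,g)$. Your extraction of the source condition from (ii) via weak compactness of $\{p_{n_k}^{\delta_k}\}$ is likewise correct.

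The genuine gap is the derivation of \eqref{bound_strong} from (ii). Your telescoped identity $t_n(g^\delta-g)=(p_n^\delta-p_0^\delta)-\sum_{j=1}^n\tau_j(g-Ku_j^\delta)$ only produces $t_n\norm{g^\delta-g}$ on the left, not $t_n\delta$; since the standing assumption is $\norm{g^\delta-g}\le\delta$ rather than equality, you have already lost the quantity you need. Even granting $\delta_k=\norm{g_k-g}$, controlling $\sum_{j\le n_k}\tau_j\norm{g-Ku_j^{\delta_k}}$ is circular: once the source element $p^\dagger$ is available, the sharpest pointwise bound you can insert is exactly your own (i)$\Rightarrow$(ii) estimate $\norm{g-Ku_j^{\delta_k}}\le\norm{p^\dagger-p_0}/t_j+2\delta_k$, whose second term contributes $2\delta_k t_{n_k}$ to the sum---strictly more than the $\delta_k t_{n_k}$ you are trying to bound---so the inequality collapses. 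Your fallback of invoking ``the genuine $\bigo(t_j^{-1/2})$ decay of the exact-data residuals from Theorem~\ref{summary:conv}'' does not help either: that theorem concerns the residual \emph{at the stopping index} under a rule satisfying \eqref{summary:abbrev}, not the intermediate residuals $\norm{g-Ku_j^{\delta_k}}$ for $j<\Gamma(\delta_k,g_k)$ at a fixed noise level, and \eqref{summary:abbrev} is not yet known here. In short, the telescoping route does not close, and the final paragraph of your proposal is a hope rather than an argument; to finish you would need a different mechanism---for instance a direct link between $\delta_k$ and the residual at the stopping time, of the kind that the discrepancy principle itself supplies (compare Lemma~\ref{morozov:pararulesc})---or else to consult the actual proof in \cite{FriSch10}.
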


 Theorem \ref{summary:ratesest2} and
Corollary \ref{summary:ratesestcor} below provide quantitative estimates for the
primal and dual iterates of the ALM in case that the source condition \eqref{defs:sceqn} holds. These
results extend \cite[Thm. 6.2]{FriSch10}.

\begin{theorem}\label{summary:ratesest2}
  Assume that $u^\dagger$ is a
  $J$-minimizing solution of $Ku=g$ which satisfies the source
  condition \eqref{defs:sceqn} with source element
  $p^\dagger\in H_2$. Then, for any  $\gamma > 0$
  \begin{equation}\label{summary:ratesesteqn1}
      D^{u^\dagger}_{J^*}(K^*p_n^\delta, K^*p^\dagger) +\frac{t_n}{4}
      \norm{Ku_n^\delta - g}^2 + \frac{\gamma - 1}{2\gamma t_n}
      \norm{p_n^\delta - p^\dagger}^2 \leq
      \frac{\norm{p^\dagger - p_0^\delta}^2}{2 t_n}+ \frac{(1+\gamma) t_n}{2} \delta^2.
  \end{equation}
\end{theorem}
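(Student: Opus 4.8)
The plan is to recognise the first term on the left-hand side as a difference of values of the dual functional $G(\cdot,g)$ and then to feed G\"uler's estimate (Proposition~\ref{summary:gueler}) into this difference, correcting for the fact that the iterates are computed from $g^\delta$ rather than from the exact data $g$.

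First I would unfold the Bregman distance. Since the source condition $K^*p^\dagger\in\partial J(u^\dagger)$ is equivalent to $u^\dagger\in\partial J^*(K^*p^\dagger)$, the element $u^\dagger$ is an admissible subgradient in the definition of $D_{J^*}$; using $\inner{u^\dagger}{K^*(p_n^\delta-p^\dagger)}=\inner{Ku^\dagger}{p_n^\delta-p^\dagger}=\inner{g}{p_n^\delta-p^\dagger}$ one obtains
\begin{equation*}
  D^{u^\dagger}_{J^*}(K^*p_n^\delta, K^*p^\dagger)
  = J^*(K^*p_n^\delta) - J^*(K^*p^\dagger) - \inner{g}{p_n^\delta - p^\dagger}
  = G(p_n^\delta, g) - G(p^\dagger, g).
\end{equation*}
Thus the quantity to be estimated is exactly a descent of $G(\cdot,g)$ from $p^\dagger$ to $p_n^\delta$.

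Next I would invoke Proposition~\ref{summary:gueler} with $p=p^\dagger$ (computed from $g^\delta$) and pass from $G(\cdot,g^\delta)$ to $G(\cdot,g)$ via the elementary identity $G(p,g^\delta)-G(p,g)=\inner{p}{g-g^\delta}$. This inserts the cross term $\inner{p_n^\delta-p^\dagger}{g^\delta-g}$ on the right, which I would control by Young's inequality with parameter $\gamma t_n$,
\begin{equation*}
  \abs{\inner{p_n^\delta - p^\dagger}{g^\delta - g}}
  \leq \frac{1}{2\gamma t_n}\norm{p_n^\delta - p^\dagger}^2 + \frac{\gamma t_n}{2}\delta^2 .
\end{equation*}
The term $\tfrac{1}{2\gamma t_n}\norm{p_n^\delta-p^\dagger}^2$ then combines with the $\tfrac{1}{2t_n}\norm{p^\dagger-p_n^\delta}^2$ produced by Güler's bound to give precisely the coefficient $\tfrac{\gamma-1}{2\gamma t_n}$ appearing in \eqref{summary:ratesesteqn1}, while $\tfrac{\gamma t_n}{2}\delta^2$ is one of the two noise contributions.

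It remains to turn the quadratic term $\tfrac{t_n}{2\tau_n^2}\norm{p_n^\delta-p_{n-1}^\delta}^2$ left on the right of Güler's estimate into the residuum $\tfrac{t_n}{4}\norm{Ku_n^\delta-g}^2$. Here I would use the update \eqref{intro:almupdate} in the form $\tau_n^{-1}(p_n^\delta-p_{n-1}^\delta)=g^\delta-Ku_n^\delta$, which rewrites this term as $\tfrac{t_n}{2}\norm{g^\delta-Ku_n^\delta}^2$, and then pass from $g^\delta$ to $g$ through $\norm{g-Ku_n^\delta}^2\leq 2\delta^2+2\norm{g^\delta-Ku_n^\delta}^2$, at the cost of a further $\tfrac{t_n}{2}\delta^2$. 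Adding the two noise terms gives $\tfrac{\gamma t_n}{2}\delta^2+\tfrac{t_n}{2}\delta^2=\tfrac{(1+\gamma)t_n}{2}\delta^2$, which is exactly the right-hand side of \eqref{summary:ratesesteqn1}. The whole argument is a chain of substitutions rather than a genuinely hard estimate; the only place demanding care is the bookkeeping, namely matching the $\norm{p_n^\delta-p^\dagger}^2$ coefficients and keeping track of the two $\delta^2$-contributions so that the weights line up with the stated constants.
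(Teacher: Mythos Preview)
Your proposal is correct and follows essentially the same route as the paper's own proof: identify the Bregman distance with a difference of $G$-values, apply G\"uler's estimate (Proposition~\ref{summary:gueler}) with $p=p^\dagger$, control the noise cross term by Young's inequality with weight $\gamma t_n$, and convert the $\norm{p_n^\delta-p_{n-1}^\delta}^2$ term into the residuum via~\eqref{intro:almupdate} and the triangle-type bound $\norm{Ku_n^\delta-g}^2\le 2\norm{Ku_n^\delta-g^\delta}^2+2\delta^2$. The only cosmetic difference is that the paper writes the identity as $G(p_n^\delta,g^\delta)-G(p^\dagger,g^\delta)=D_{J^*}^{u^\dagger}(\cdot)+\inner{p_n^\delta-p^\dagger}{g-g^\delta}$ rather than first passing to $G(\cdot,g)$, but this is the same computation.
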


\begin{proof}  Since $u^\dagger$ satisfies the source condition, we have that
$K^*p^\dagger\in \partial J(u^\dagger)$ which is equivalent to $u^\dagger \in
\partial J^*(K^* p^\dagger)$. This leads to
\begin{eqnarray*}
G(p_n^\delta,g^\delta) -G(p^\dagger,g^\delta)&=& G(p_n^\delta,g)
-G(p^\dagger,g) +\inner{p_n^\delta-p^\dagger}{g-g_k}\\ &=&
J^*(K^*p_n^\delta)-J^*(K^*p^\dagger)-\inner{p_n^\delta-p^\dagger}{g}+\inner{p_n^\delta-p^\dagger}{g-g^\delta}\\
&=&
J^*(K^*p_n^\delta)-J^*(K^*p^\dagger)-\inner{K^*p_n^\delta-K^*p^\dagger}{u^\dagger}+\inner{p_n^\delta-p^\dagger}{g-g^\delta}\\
&=&
D_{J^*}^{u^\dagger}(K^*p_n^\delta,K^*p^\dagger)+\inner{p_n^\delta-p^\dagger}{g-g^\delta}.
\end{eqnarray*}
Therefore, the last inequality together with Proposition \ref{summary:gueler}
and Young's inequality gives for an arbitrary $\gamma > 0$
\begin{multline}\label{summary:bregaux1}
 D^{u^\dagger}_{J^*}(K^*p_n^\delta, K^*p^\dagger) = G(p_n^\delta,g^\delta)
 -G(p^\dagger,g^\delta) + \inner{p_n^\delta-p^\dagger}{g^\delta-g} \\
 \leq \frac{\norm{p^\dagger - p_0^\delta}^2}{2 t_n} -
 \left(\frac{\gamma-1}{\gamma}\right)\frac{\norm{p^\dagger - p_n^\delta}^2}{2
 t_n} - \frac{t_n \norm{p^\delta_n - p_{n-1}^\delta}^2}{2 \tau_n^2} + \frac{\gamma}{2}\delta^2 t_n
\end{multline}
Using \eqref{intro:almupdate} together with the inequality
$\norm{Ku_n^\delta - g}^2 \leq 2 \norm{Ku_n^\delta - g^\delta}^2+2\delta^2$
and the previous estimate show the assertion.\qquad
\end{proof}

\begin{lemma}\label{summary:auxlem}
Let $a,b > 0$. Then, 
\begin{equation*}
  \inf_{\gamma > 1} \left(\frac{\gamma}{\gamma-1}a +
  \frac{\gamma^2}{\gamma-1}b\right) = \left(\sqrt{b} + \sqrt{a+b}\right)^2.
\end{equation*}
\end{lemma}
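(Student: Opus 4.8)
The plan is to treat this as an elementary one-variable minimization, so the first move is to clear the denominators by substituting $s = \gamma - 1 > 0$. Writing $\gamma = 1+s$ and $\gamma^2 = 1 + 2s + s^2$ and expanding, the objective collapses to a transparent form:
$$
\frac{\gamma}{\gamma-1}a + \frac{\gamma^2}{\gamma-1}b = \frac{(1+s)a + (1+2s+s^2)b}{s} = a + 2b + \frac{a+b}{s} + sb.
$$
This already isolates the entire $s$-dependence into the two terms $\frac{a+b}{s}$ and $sb$, with the constant $a+2b$ dropping out of the optimization.

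Next I would minimize $\frac{a+b}{s} + sb$ over $s>0$. Since $a,b>0$ both summands are strictly positive, so the arithmetic–geometric mean inequality gives
$$
\frac{a+b}{s} + sb \;\geq\; 2\sqrt{\frac{a+b}{s}\cdot sb} \;=\; 2\sqrt{b(a+b)},
$$
with equality precisely when $\frac{a+b}{s} = sb$, i.e.\ at $s_* = \sqrt{(a+b)/b}$, which lies in $(0,\infty)$. Hence the infimum is attained and equals $a + 2b + 2\sqrt{b(a+b)}$. (One could equivalently set the derivative $-\frac{a+b}{s^2} + b$ to zero and verify the second-order condition, but AM–GM is cleaner and yields both the value and the minimizer in one stroke.)

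Finally I would match this against the claimed closed form by simply expanding it: $(\sqrt b + \sqrt{a+b})^2 = b + 2\sqrt{b(a+b)} + (a+b) = a + 2b + 2\sqrt{b(a+b)}$, which coincides exactly with the infimum computed above. This completes the identification. I do not anticipate any genuine obstacle here; the only point warranting a word of care is checking that the optimizer $s_*$ corresponds to an admissible $\gamma = 1 + s_* > 1$ (so that the infimum over $\gamma>1$ is in fact attained), and this is automatic from $s_* > 0$.
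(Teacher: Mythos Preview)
Your proof is correct. The approach differs slightly from the paper's: the paper differentiates $f(\gamma)=(\gamma-1)^{-1}(\gamma a+\gamma^2 b)$ directly, locates the minimizer $\gamma_*=1+\sqrt{1+a/b}$, and then evaluates $f(\gamma_*)$ by algebraic simplification. Your substitution $s=\gamma-1$ reduces the problem to minimizing $\tfrac{a+b}{s}+sb$ and dispatches it with the AM--GM inequality; this is marginally cleaner since it yields the minimum value and the minimizer simultaneously without a separate evaluation step or second-order check. The two routes are otherwise equivalent (your $s_*=\sqrt{(a+b)/b}$ is exactly the paper's $\gamma_*-1$).
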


\begin{proof}
  With elementary calculus it is straightforward to deduce that the function
  $f(\gamma) = (\gamma-1)^{-1}(\gamma a + \gamma^2 b)$ attains its minimum among all
  $\gamma>1$ at
  \begin{equation*}
      \gamma_* = 1 + \sqrt{1 + \frac{a}{b}}.
  \end{equation*}
  Then, $\gamma_*\slash(\gamma_* - 1) = (\sqrt{b} +
  \sqrt{a+b})\slash\sqrt{a+b}$ and hence
  \begin{equation*}
  f(\gamma^*) =  \frac{\gamma_*}{\gamma_* -1}\left( a + \gamma_* b\right)  = 
  \frac{\gamma_*}{\gamma_* -1}\left(a+b + \sqrt{b}\sqrt{a+b}\right) =
  \left(\sqrt{b} + \sqrt{a+b}\right)^2.
  \end{equation*}
\end{proof}

\begin{corollary}\label{summary:ratesestcor} 
Let the assumptions of Theorem \ref{summary:ratesest2} hold.
\begin{enumerate}[i)]
  \item If $0< \alpha < 1\slash 2$, then
\begin{equation*}\label{summary:bregdistsest}
  \alpha D^{\text{sym}}_J(u_n^\delta, u^\dagger) +
  D^{u^\dagger}_{J^*}(K^*p_n^\delta, K^*p^\dagger) \leq 
  \frac{1-\alpha}{1-2\alpha} \delta^2 t_n + \frac{\norm{p^\dagger -
  p_0^\delta}^2}{2 t_n}.
\end{equation*}  
\item  It holds
\begin{equation*}\label{summary:bregdistest}
  D^{\text{sym}}_J(u_n^\delta, u^\dagger) \leq \norm{Ku_n^\delta -
  g}\Bigl(\delta t_n + \sqrt{\delta^2 t_n^2 + \norm{p_0^\delta
  -p^\dagger}^2}\Bigr).
\end{equation*}
\end{enumerate}
\end{corollary}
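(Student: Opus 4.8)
The plan is to reduce both inequalities to one identity for the symmetric Bregman distance, combined with the quantitative estimates already contained in Theorem \ref{summary:ratesest2} and its proof. The key preliminary observation is that the optimality condition for \eqref{intro:almobjective} gives $K^*p_n^\delta \in \partial J(u_n^\delta)$ (this is precisely the subgradient $\xi_k$ from Theorem \ref{summary:conv}), while the source condition gives $K^*p^\dagger \in \partial J(u^\dagger)$. Since $Ku^\dagger = g$, I would therefore compute
\begin{equation*}
  D^{\text{sym}}_J(u_n^\delta, u^\dagger) = \inner{K^*p_n^\delta - K^*p^\dagger}{u_n^\delta - u^\dagger} = \inner{p_n^\delta - p^\dagger}{Ku_n^\delta - g} \leq \norm{p_n^\delta - p^\dagger}\,\norm{Ku_n^\delta - g},
\end{equation*}
the last step by Cauchy--Schwarz. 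Everything else is bookkeeping on top of this.

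For part (i) I would invoke Theorem \ref{summary:ratesest2} with the tuned value $\gamma = 1\slash(1-2\alpha)$, which is admissible because $\gamma > 1$ whenever $0 < \alpha < 1\slash 2$. The virtue of this choice is that it makes the two constants collapse to the ones in the claim: one checks $(\gamma-1)\slash(2\gamma) = \alpha$ and $(1+\gamma)\slash 2 = (1-\alpha)\slash(1-2\alpha)$, so \eqref{summary:ratesesteqn1} reads
\begin{equation*}
  D^{u^\dagger}_{J^*}(K^*p_n^\delta, K^*p^\dagger) + \frac{t_n}{4}\norm{Ku_n^\delta - g}^2 + \frac{\alpha}{t_n}\norm{p_n^\delta - p^\dagger}^2 \leq \frac{\norm{p^\dagger - p_0^\delta}^2}{2t_n} + \frac{1-\alpha}{1-2\alpha}\,\delta^2 t_n.
\end{equation*}
It then suffices to absorb $\alpha D^{\text{sym}}_J$ into the two quadratic terms on the left: by the arithmetic--geometric mean inequality these dominate $\sqrt{\alpha}\,\norm{Ku_n^\delta - g}\,\norm{p_n^\delta - p^\dagger}$, and since $\sqrt{\alpha} \geq \alpha$ on $(0,1)$ this is at least $\alpha\,\norm{Ku_n^\delta - g}\,\norm{p_n^\delta - p^\dagger} \geq \alpha D^{\text{sym}}_J(u_n^\delta, u^\dagger)$ by the identity above. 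As the left-hand side of the displayed inequality already contains $D^{u^\dagger}_{J^*}$, this shows that $\alpha D^{\text{sym}}_J(u_n^\delta, u^\dagger) + D^{u^\dagger}_{J^*}(K^*p_n^\delta, K^*p^\dagger)$ is no larger than the right-hand side, which is exactly the claim.

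For part (ii) I would first isolate a bound on $\norm{p_n^\delta - p^\dagger}$. Discarding the two nonnegative terms $D^{u^\dagger}_{J^*}$ and $t_n\norm{p_n^\delta - p_{n-1}^\delta}^2\slash(2\tau_n^2)$ in the intermediate estimate \eqref{summary:bregaux1} and rearranging gives, for every $\gamma > 1$,
\begin{equation*}
  \norm{p_n^\delta - p^\dagger}^2 \leq \frac{\gamma}{\gamma-1}\,\norm{p_0^\delta - p^\dagger}^2 + \frac{\gamma^2}{\gamma-1}\,\delta^2 t_n^2.
\end{equation*}
This is exactly the quantity minimized in Lemma \ref{summary:auxlem} with $a = \norm{p_0^\delta - p^\dagger}^2$ and $b = \delta^2 t_n^2$; taking the infimum over $\gamma > 1$ and then a square root yields $\norm{p_n^\delta - p^\dagger} \leq \delta t_n + \sqrt{\delta^2 t_n^2 + \norm{p_0^\delta - p^\dagger}^2}$. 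Multiplying this by $\norm{Ku_n^\delta - g}$ and using the Cauchy--Schwarz estimate for $D^{\text{sym}}_J$ from the first paragraph closes the proof.

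The step I expect to need the most care is the choice of the source estimate in part (ii). Starting from the final form \eqref{summary:ratesesteqn1} and merely discarding its $\norm{Ku_n^\delta - g}^2$ term leaves the coefficient $\gamma(1+\gamma)\slash(\gamma-1)$ multiplying $\delta^2 t_n^2$, which does not fit the template $\gamma^2\slash(\gamma-1)$ of Lemma \ref{summary:auxlem} and would produce $2\delta^2 t_n^2$ under the root. One must instead go back to the sharper intermediate inequality \eqref{summary:bregaux1}, whose last squared term carries coefficient $\gamma\slash 2$ rather than $(1+\gamma)\slash 2$; only then does Lemma \ref{summary:auxlem} apply verbatim and give the advertised constant. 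The coefficient identities and admissibility for $\gamma = 1\slash(1-2\alpha)$ in part (i) are by comparison routine.
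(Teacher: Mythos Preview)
Your proposal is correct and follows essentially the same route as the paper. For part (i) the paper uses Young's inequality to bound $\alpha D^{\text{sym}}_J$ by $\tfrac{\alpha}{t_n}\norm{p_n^\delta-p^\dagger}^2+\tfrac{t_n}{4}\norm{Ku_n^\delta-g}^2$ and then inserts this into \eqref{summary:ratesesteqn1} with $\gamma=1/(1-2\alpha)$; your AM--GM argument that $\tfrac{t_n}{4}\norm{Ku_n^\delta-g}^2+\tfrac{\alpha}{t_n}\norm{p_n^\delta-p^\dagger}^2\geq \sqrt{\alpha}\,\norm{Ku_n^\delta-g}\,\norm{p_n^\delta-p^\dagger}\geq \alpha D^{\text{sym}}_J$ is just the same estimate read the other way. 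For part (ii) you match the paper exactly, including the important point that one must return to the intermediate inequality \eqref{summary:bregaux1} (coefficient $\gamma/2$) rather than \eqref{summary:ratesesteqn1} (coefficient $(1+\gamma)/2$) in order to invoke Lemma \ref{summary:auxlem} with the correct constants.
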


\begin{proof} 
  From Young's inequality it follows that
  \begin{equation*}
  \alpha D_J^\text{sym}(u_n^\delta, u^\dagger)=\alpha \inner{p_n^\delta-
  p^\dagger}{Ku_n^\delta-g} \leq \frac{\alpha}{t_n} \norm{p_n^\delta -
  p^\dagger}^2 + \frac{t_n}{4} \norm{Ku_n^\delta -g}^2.
  \end{equation*}
  Hence the first inequality follows from Theorem \ref{summary:ratesest2} with
  $\gamma=1\slash (1-2\alpha)$, due to the fact that  $\alpha < 1\slash 2$.
  
  In order to prove ii) we observe from \eqref{summary:bregaux1} that for all
  $\gamma > 1$
  \begin{equation*}
      \norm{p_n^\delta - p^\dagger}^2 \leq \frac{\gamma}{\gamma-1}
      \norm{p_0^\delta - p^\dagger}^2 + \frac{\gamma^2}{\gamma-1}\delta^2t_n^2.
  \end{equation*}
  Hence,  Lemma \ref{summary:auxlem} with $a = \norm{p_0^\delta
  - p^\dagger}^2$ and $b=\delta^2t_n^2$ leads to
  \begin{equation*}
       \norm{p_n^\delta - p^\dagger}^2 \leq \left( \delta t_n +  \sqrt{\delta^2
       t_n^2 + \norm{p_0^\delta -p^\dagger}^2}\right)^2.
  \end{equation*}
  Finally, the assertion follows from
  \begin{equation*}
      D^{\text{sym}}_J(u_n^\delta, u^\dagger) = \inner{p_n^\delta -
      p^\dagger} {K(u_n^\delta - u^\dagger)}\leq \norm{Ku_n^\delta -
      g}\norm{p_n^\delta - p^\dagger}.
  \end{equation*}
\end{proof}

\begin{remark}\label{summary:remonrates}\upshape
\begin{enumerate}[i)]
  \item Obviously, the best possible rates with respect to the
estimates in Theorem \ref{summary:ratesest} and Corollary \ref{summary:ratesestcor} i) are obtained
when $t_{\Gamma(\delta, g^\delta)} \sim \delta^{-1}$. However, if one only has
\begin{equation*}\label{summary:paramchoice}
  \delta t_{\Gamma(\delta, g^\delta)} \leq  C,
\end{equation*} 
for some $C>0$, then Corollary \ref{summary:ratesestcor} ii) shows that the
symmetric Bregman distance behaves at least as well as the residual:
\begin{equation*}
  D^{\text{sym}}_J(u_n^\delta, u^\dagger) = \bigo\left(\norm{Ku_n^\delta -
  g}\right). 
\end{equation*}  
\item Since $K^*p_n^\delta\in\partial J(u_n^\delta)$ and $K^*p^\dagger \in
\partial J(u^\dagger)$ is equivalent to $u_n^\delta\in \partial
J^*(K^*p_n^\delta)$ and $u^\dagger\in\partial J^*(p^\dagger)$ respectively, it
follows that
\begin{eqnarray*}
 D^{\text{sym}}_{J^*}(K^*p_n^\delta, K^*p^\dagger) &
 = & D^{u^\dagger}_{J^*}(K^*p_k, K^*p^\dagger)+
 D^{u_k}_{J^*}(K^*p^\dagger,K^*p_k ) \\
 & = &  \inner{u_k-u^\dagger}{K^*p_k-K^*p^\dagger}\\ 
 &=& D^{\text{sym}}_J(u_n^\delta, u^\dagger).
\end{eqnarray*}
Hence, all estimates for the primal variables $\set{u_n^\delta}_{n\in\N}$
automatically  hold also for $\set{K^*p_n^\delta}_{n\in\N}$.
\end{enumerate}
\end{remark}

%%%%%%%%%%%%%%%%%%%%%%%%%%%%%%%%%%%%%%%%%%%%%%%%%%%%%%%%%%%%%%%%%%%%%%%%%%%%%%%%%%%%
\section{Morozov's discrepancy principle}\label{morozov}

In this section we analyze the discrepancy principle as an a posteriori stopping rule.
In order to apply the convergence (rate) results in
Theorems~\ref{summary:conv},~\ref{summary:ratesest},
and~\ref{summary:ratesest2}, a given stopping rule
$\Gamma:(0,\infty) \times H_2\ra \N$  has to satisfy \eqref{summary:abbrev} and
\eqref{bound_strong}, respectively. We verify these estimates for the
particular situation where the stopping index is chosen according to
\emph{Morozov's discrepancy principle}: Choose $\rho > 1$ and define
\begin{equation}\label{morozov:disc}
  \Gamma(\delta, g^\delta) := \min\set{n\in\N~:~ \norm{K u_n^\delta - g^\delta}
  < \rho \delta}.
%  \Gamma(\delta, g^\delta) := \max\set{n\in\N~:~ \norm{K u_n^\delta -g^\delta} > \rho \delta}.
\end{equation}
That is, we take the first iterate $u_n^\delta$ for which the residual
$\norm{K u_n^\delta - g^\delta}$ falls below a number which is a constant $\rho$ times the noise level $\delta$.

\begin{proposition}
The stopping rule \eqref{morozov:disc} is well defined.
\end{proposition}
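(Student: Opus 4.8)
The plan is to reduce the statement to the non-emptiness of the set
\begin{equation*}
  S := \set{n\in\N~:~ \norm{K u_n^\delta - g^\delta} < \rho\delta},
\end{equation*}
since $\N$ is well-ordered and $S\neq\emptyset$ then guarantees the existence of $\min S$. I would in fact prove the sharper quantitative bound
\begin{equation*}
  \limsup_{n\ra\infty}\norm{Ku_n^\delta - g^\delta} \leq \delta,
\end{equation*}
which, because $\rho>1$, shows $\norm{Ku_n^\delta - g^\delta} < \rho\delta$ for all large $n$; in particular $S$ is cofinite and hence non-empty.

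The engine is G\"uler's estimate (Proposition \ref{summary:gueler}). Using the update \eqref{intro:almupdate} to rewrite $t_n\norm{p_n^\delta - p_{n-1}^\delta}^2/(2\tau_n^2) = (t_n/2)\norm{Ku_n^\delta - g^\delta}^2$, it gives, for any fixed reference $\bar p\in H_2$ with $G(\bar p,g^\delta)<\infty$,
\begin{equation*}
  \frac{t_n}{2}\norm{Ku_n^\delta - g^\delta}^2 \leq G(\bar p,g^\delta) - G(p_n^\delta,g^\delta) + \frac{\norm{\bar p - p_0^\delta}^2}{2t_n} - \frac{\norm{\bar p - p_n^\delta}^2}{2t_n}.
\end{equation*}
The decisive difficulty is that $G(\cdot,g^\delta)$ is in general \emph{not} bounded below, because the noisy datum $g^\delta$ need not be attainable; one therefore cannot simply conclude that the residual tends to $0$. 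Instead I would exploit the attainability of the exact datum, $g=K\bar u$ with $\bar u\in D(J)$: the Fenchel--Young inequality $J^*(K^*p)+J(\bar u)\geq\inner{K^*p}{\bar u}=\inner{p}{g}$ produces the lower bound
\begin{equation*}
  G(p,g^\delta) = J^*(K^*p)-\inner{p}{g^\delta} \geq \inner{p}{g-g^\delta} - J(\bar u) \geq -\delta\norm{p} - J(\bar u).
\end{equation*}

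Inserting this with $p=p_n^\delta$ into the previous display leaves on the right-hand side only the terms $\delta\norm{p_n^\delta}$ and $-\norm{\bar p - p_n^\delta}^2/(2t_n)$, both governed by the a priori uncontrolled quantity $a_n:=\norm{p_n^\delta}$. The key manoeuvre is that, after estimating $\norm{\bar p - p_n^\delta}^2\geq a_n^2 - 2\norm{\bar p}\,a_n$, these assemble into a downward parabola $(\delta+\bigo(1/t_n))a_n - a_n^2/(2t_n)$ in $a_n$, whose maximum over $a_n\geq 0$ is $\delta^2 t_n/2 + \bigo(1)$. Dividing by $t_n/2$ and using $t_n\ra\infty$ from \eqref{infty} yields $\norm{Ku_n^\delta - g^\delta}^2\leq \delta^2 + \bigo(1/t_n)$, which is the claimed bound. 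I expect the extraction of the \emph{sharp} constant $\delta$ to be the main obstacle: $\rho$ may be arbitrarily close to $1$, so a cruder treatment — e.g. bounding $\norm{p_n^\delta}$ by the telescoped triangle inequality along the iteration — would only give $\limsup\norm{Ku_n^\delta - g^\delta}\leq 2\delta$ and would fail for $\rho\in(1,2]$. The choice $\bar p=0$ removes the cross term entirely and is admissible whenever $J$ is bounded below (as in both guiding examples, where $G(0,g^\delta)=J^*(0)=-\inf J$ is finite); in general one may take $\bar p=p_1^\delta$, which is legitimate since $K^*p_1^\delta\in D(J^*)$.
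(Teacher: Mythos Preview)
Your argument is correct and leads to exactly the bound the paper uses, namely $\tfrac{1}{2}\norm{Ku_n^\delta - g^\delta}^2 \leq C/t_n + \delta^2/2$; the paper simply imports this estimate from \cite[Cor.~5.2]{FriSch10} rather than deriving it, while you reprove it from G\"uler's estimate (Proposition~\ref{summary:gueler}) via the quadratic-in-$\norm{p_n^\delta}$ trick. Your observation that the sharp constant $\delta$ (not $2\delta$) is essential for $\rho$ close to $1$, and that this is precisely what the completion-of-the-square buys, is on point; the choice of admissible reference $\bar p$ (either $0$ when $\inf J>-\infty$ or $p_1^\delta$ in general) is also handled correctly.
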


\begin{proof}
  It follows from \cite[Cor.~5.2]{FriSch10} that there exists a constant $C >
  0$ such that
  \begin{equation*}
    \frac{1}{2}\norm{K u_n^\delta - g^\delta}^2 \leq \frac{C}{t_n} +
    \frac{\delta^2}{2}
  \end{equation*}
  This implies that for all $\rho > 1$ there exists an index $n_0\in
  \N$ for which $\norm{Ku_{n_0}^\delta-g^\delta}<\rho\delta$. 
  Thus, $\Gamma(\delta, g^\delta)<\infty$ is ensured.\qquad
\end{proof}

Our analysis is structured as follows: In Section \ref{morozov:qualitative}, we
derive convergence rates (based on Corollary \ref{summary:ratesestcor} ii)) for
the symmetric Bregman-distance between the primal iterates $\set{u_n^\delta}_{n\in\N}$ and $J$-minimizing solutions of $Ku=g$, under the
hypothesis that the source condition holds. Here, we make no other assumption
on $\Gamma(\delta, g^\delta)$ except \eqref{morozov:disc}. 
In Section \ref{sec:nondegenerate-case} we then point out that the convergence
results in Theorems~\ref{summary:conv} and~\ref{summary:ratesest} apply for the
parameter choice rule \eqref{morozov:disc} if additionally one requires $\lim_{\delta\ra0} 
{\Gamma(\delta, g^\delta)} = \infty$. We refer to this situation as the
\emph{non-degenerate case}.
Finally in Section \ref{sec:degenerate-case} we treat the \emph{degenerate
case}, i.e., where $\{\Gamma(\delta, g^\delta)\}_{\delta}$ has finite
accumulation points.  

\subsection{Convergence rates.}\label{morozov:qualitative}

We will state a qualitative estimate for the Bregman distance between the primal
variables in the ALM  and solutions of \eqref{intro:constrained} if
the source condition is satisfied and if the Morozov stopping rule
is applied. In particular, this analysis sheds some light
on the role of $\rho$ in \eqref{morozov:disc}.

\begin{lemma}\label{morozov:pararulesc}
Let $u^\dagger$ be a $J$-minimizing
solution of $Ku=g$ that satisfies the source condition with source element
$p^\dagger$ and assume that $\Gamma$ is chosen according to the stopping
  rule~\eqref{morozov:disc}. Then, 
\begin{equation}\label{morozov:sourcecondparamest}
    \delta t_{\Gamma(\delta, g^\delta)} \leq \frac{\norm{p^\dagger -
    p_0^\delta}}{\sqrt{\rho -1}} + \delta \bar\tau.
\end{equation}
In particular, \eqref{bound_strong} is satisfied. 
\end{lemma}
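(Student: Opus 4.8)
The plan is to bound $\delta\,t_{\Gamma(\delta,g^\delta)}$ by exploiting the minimality built into \eqref{morozov:disc}: the iterate one step before the stopping index must still violate the discrepancy criterion. Abbreviate $N:=\Gamma(\delta,g^\delta)$ and first assume $N\geq 2$. By definition of the stopping rule, the index $n=N-1$ does not satisfy \eqref{morozov:disc}, so
\[
  \norm{Ku_{N-1}^\delta - g^\delta} \geq \rho\delta.
\]

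Next I would derive an upper bound for this same residual at a general index. Rather than invoke the statement of Theorem \ref{summary:ratesest2} directly---which is phrased through $\norm{Ku_n^\delta - g}$ and would cost a factor $2$---I would return to the intermediate estimate \eqref{summary:bregaux1} in its proof. Using the update \eqref{intro:almupdate}, which gives $\norm{p_n^\delta - p_{n-1}^\delta}^2/\tau_n^2 = \norm{Ku_n^\delta - g^\delta}^2$, one may move the two subtracted terms of \eqref{summary:bregaux1} to the left-hand side and discard the non-negative Bregman distance $D^{u^\dagger}_{J^*}(K^*p_n^\delta,K^*p^\dagger)$ together with the non-negative quantity $\tfrac{\gamma-1}{\gamma}\tfrac{\norm{p^\dagger-p_n^\delta}^2}{2t_n}$ (the latter being non-negative exactly when $\gamma>1$). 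This leaves, for every $\gamma>1$ and every $n$,
\[
  \frac{t_n}{2}\norm{Ku_n^\delta - g^\delta}^2 \leq \frac{\norm{p^\dagger - p_0^\delta}^2}{2t_n} + \frac{\gamma}{2}\,\delta^2 t_n.
\]

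Evaluating this at $n=N-1$ and inserting $\norm{Ku_{N-1}^\delta - g^\delta}^2 \geq \rho^2\delta^2$, then multiplying by $2t_{N-1}$ and rearranging, gives
\[
  \delta^2 t_{N-1}^2\,(\rho^2 - \gamma) \leq \norm{p^\dagger - p_0^\delta}^2.
\]
The decisive step is the calibration of the free parameter: choosing $\gamma=\rho^2-\rho+1$, which satisfies $1<\gamma<\rho^2$ precisely because $\rho>1$, turns $\rho^2-\gamma$ into $\rho-1$ and yields $\delta\,t_{N-1}\leq \norm{p^\dagger - p_0^\delta}/\sqrt{\rho-1}$. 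Finally $t_N=t_{N-1}+\tau_N$ together with $\tau_N\leq\bar\tau$ from \eqref{infty} produces \eqref{morozov:sourcecondparamest}. The degenerate index $N=1$ is handled separately: there $\delta\,t_1=\delta\tau_1\leq\delta\bar\tau$, which is already dominated by the right-hand side since $\norm{p^\dagger-p_0^\delta}/\sqrt{\rho-1}\geq 0$. That \eqref{bound_strong} holds then follows at once, the right-hand side of \eqref{morozov:sourcecondparamest} being uniformly bounded as $\delta\to 0$.

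The main obstacle I anticipate is twofold. First, one must work with the $g^\delta$-form of the estimate---the very residual that Morozov's rule controls---rather than the $g$-form in Theorem \ref{summary:ratesest2}, because the elementary bound $\norm{Ku_n^\delta-g}^2\leq 2\norm{Ku_n^\delta-g^\delta}^2+2\delta^2$ loses a factor $2$ and would force the unnatural restriction $\rho>3$. Second, the choice of $\gamma$ must be tuned so that $\rho^2-\gamma=\rho-1$; this is exactly what produces the $\sqrt{\rho-1}$ denominator while keeping $\gamma$ inside its admissible window $1<\gamma<\rho^2$, and it is what makes the whole argument valid for every $\rho>1$.
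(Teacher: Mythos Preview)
Your argument is correct and follows essentially the same route as the paper: both proofs combine G\"uler's estimate (Proposition~\ref{summary:gueler}) with Young's inequality and the minimality of $p^\dagger$ for $G(\cdot,g)$, applied at the index $N-1$ where the discrepancy is still $\geq\rho\delta$. The only cosmetic difference is that the paper re-derives the key inequality directly from \eqref{summary:guelest} (using the cruder bound $\norm{Ku_{n_*}^\delta-g^\delta}^2\geq\rho\,\delta^2$ and then Young with $\eta=t_{n_*}$), whereas you recycle the already-packaged estimate \eqref{summary:bregaux1} and recover the same constant $\sqrt{\rho-1}$ via the calibrated choice $\gamma=\rho^2-\rho+1$; your explicit handling of the edge case $N=1$ is a welcome addition.
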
  
\begin{proof}
  Let $g^\delta \in H_2$ and set $\delta:= \norm{g - g^\delta}$ as well as $n_* =
  \Gamma(\delta, g^\delta)-1$.  Then, it follows from  \eqref{morozov:disc} that
  \begin{equation*}
      \norm{K u_{n^*}^\delta - g^\delta} \geq \rho \delta.
  \end{equation*}
  This together with \eqref{summary:guelest} yields
  \begin{equation*}
    \frac{\norm{p - p_{n_*}^\delta}^2}{2 t_{n_*}} +  \frac{\rho \delta^2
    t_{n_*}}{2} \leq G(p,g^\delta) - G(p_{n_*}^\delta, g^\delta) +  \frac{\norm{p -
    p_0^\delta}^2}{2 t_{n_*}}
  \end{equation*}
  for all $p \in H_2$ (recall that  $\norm{K u_n^\delta - g^\delta} =
  \tau_n^{-1}\norm{p_n^\delta - p_{n-1}^\delta}$ by
  \eqref{intro:almupdate}). From the definition of $G$ it follows that $G(p,g^\delta) - G(p_{n_*}^\delta, g^\delta) = G(p,g) - G(p_{n_*}^\delta, g) + \inner{p - p_{n_*}^\delta}{g - g^\delta}$. After applying Young's inequality to the inner product we get, for every $p\in H_2$ and $\eta>0$,
  \begin{equation*}
    \frac{\norm{p - p_{n_*}^\delta}^2}{2 t_{n_*}} + \frac{\rho \delta^2 t_{n_*}}{2} \leq G(p,g) - G(p_{n_*}^\delta, g) + \frac{\norm{p - p_{n_*}^\delta}^2}{2\eta} + \frac{\eta\norm{g - g^\delta}^2}{2} + \frac{\norm{p - p_0^\delta}^2}{2 t_{n_*}}.
  \end{equation*}
  Setting $\eta = t_{n_*}$ hence gives
  \begin{equation}\label{morozov:est}
    \frac{(\rho-1) \delta^2 t_{n_*}}{2} \leq G(p,g) - G(p_{n_*}^\delta, g) +
    \frac{\norm{p - p_0^\delta}^2}{2 t_{n_*}}.
  \end{equation}
  Since $u^\dagger$ satisfies the source condition with source element
  $p^\dagger$, it follows from \cite[Prop. 6.1]{FriSch10} that $G(p^\dagger,g)
  \leq G(p,g)$ for all $p\in H_2$. Moreover, using $p^\dagger$ instead of $p$ in \eqref{morozov:est} shows
  \begin{equation*}
    \frac{(\rho-1) \delta^2 t_{n_*}}{2} \leq   \frac{\norm{p^\dagger -
    p_0^\delta}^2}{2 t_{n_*}}
  \end{equation*}
  or in other words
  \begin{equation*}
    \delta t_{\Gamma(\delta, g^\delta)} \leq \frac{\norm{p^\dagger -
    p_0^\delta}}{\sqrt{\rho -1}} + \delta \bar\tau.
  \end{equation*}
\end{proof}

With this preparation we are ready to state the announced estimate for the
primal variables.
 
\begin{theorem}\label{morozov:goodest}  Let the assumptions of Lemma
\ref{morozov:pararulesc} be satisfied. Then,
\begin{equation}\label{morozov:goodestimate}
       D_J^{\text{sym}}(\Reg_{\Gamma(\delta, g^\delta)}(g^\delta), u^\dagger)
       \leq \left(1 +
       \bigo(\sqrt{\delta})\right)\frac{\rho(\sqrt{\rho}+1)}{\sqrt{\rho-1}}
       \norm{p_0^\delta - p^\dagger} \delta.
\end{equation}
\end{theorem}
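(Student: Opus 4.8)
The plan is to specialize the already-proved bound of Corollary~\ref{summary:ratesestcor}~ii) to the Morozov stopping index $n=\Gamma(\delta,g^\delta)$ and to insert the two quantitative facts that this particular index supplies: a bound on the residual $\norm{Ku_n^\delta-g}$ furnished by the stopping rule~\eqref{morozov:disc}, and a bound on $\delta t_n$ furnished by Lemma~\ref{morozov:pararulesc}. Abbreviating $a:=\norm{p_0^\delta-p^\dagger}$ and $s:=\delta t_n$, Corollary~\ref{summary:ratesestcor}~ii) reads
\begin{equation*}
  D_J^{\text{sym}}(u_n^\delta,u^\dagger)\leq \norm{Ku_n^\delta-g}\bigl(s+\sqrt{s^2+a^2}\bigr),
\end{equation*}
so the entire task reduces to controlling the two factors on the right-hand side as $\delta\to 0$.

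First I would estimate the residual. By definition~\eqref{morozov:disc}, $n$ is the first index with $\norm{Ku_n^\delta-g^\delta}<\rho\delta$; hence the triangle inequality together with $\norm{g-g^\delta}\leq\delta$ controls $\norm{Ku_n^\delta-g}$ by a fixed multiple of $\delta$. Next, Lemma~\ref{morozov:pararulesc} bounds the second factor through $s=\delta t_n\leq a/\sqrt{\rho-1}+\delta\bar\tau=:s_+$. Since $s\mapsto s+\sqrt{s^2+a^2}$ is monotonically increasing, I may replace $s$ by $s_+$, so that it only remains to expand $s_++\sqrt{s_+^2+a^2}$ in powers of $\delta$.

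The leading behaviour is transparent: letting $\delta\to 0$ collapses $s_+$ to $a/\sqrt{\rho-1}$, and since $a^2/(\rho-1)+a^2=\rho\,a^2/(\rho-1)$ one obtains the limit $a/\sqrt{\rho-1}+a\sqrt{\rho}/\sqrt{\rho-1}=a(\sqrt{\rho}+1)/\sqrt{\rho-1}$. Multiplying this by the residual factor and collecting the leading coefficient yields the announced factor $(\sqrt{\rho}+1)/\sqrt{\rho-1}$ times $a\delta$. The $\delta\bar\tau$ perturbation is disposed of by writing $\sqrt{s_+^2+a^2}=\sqrt{\rho a^2/(\rho-1)+\bigl(2a\bar\tau\delta/\sqrt{\rho-1}+\bar\tau^2\delta^2\bigr)}$ and applying subadditivity $\sqrt{x+y}\leq\sqrt{x}+\sqrt{y}$; the correction then enters as $\bigo(\sqrt{\delta})$, which is precisely the origin of the factor $1+\bigo(\sqrt{\delta})$ (a sharper expansion $\sqrt{1+t}\leq 1+t/2$ would in fact give $1+\bigo(\delta)$).

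The step I expect to be most delicate is the exact tracking of the leading constant, and specifically the handling of the data error $g-g^\delta$. Splitting $Ku_n^\delta-g=(Ku_n^\delta-g^\delta)+(g^\delta-g)$, the first summand is controlled by $\rho\delta$ through~\eqref{morozov:disc}, while the second only satisfies $\norm{g^\delta-g}\leq\delta$; a plain triangle inequality therefore inflates the residual prefactor from $\rho$ to $\rho+1$. Since $\norm{p_n^\delta-p^\dagger}=\bigo(1)$, the cross term $\inner{p_n^\delta-p^\dagger}{g^\delta-g}$ is genuinely of order $\delta$, so recovering the sharper constant $\rho(\sqrt{\rho}+1)/\sqrt{\rho-1}$ claimed in~\eqref{morozov:goodestimate} requires estimating this cross term more carefully than by Cauchy--Schwarz. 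This is the one point of the argument that needs genuine thought; everything else reduces to the monotone substitution described above together with the closed-form optimization underlying Corollary~\ref{summary:ratesestcor}~ii) (Lemma~\ref{summary:auxlem}).
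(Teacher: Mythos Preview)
Your approach is exactly the paper's: bound the residual via the stopping rule~\eqref{morozov:disc}, bound $\delta t_n$ via Lemma~\ref{morozov:pararulesc}, substitute both into Corollary~\ref{summary:ratesestcor}~ii), and use the subadditivity $\sqrt{x+y}\leq\sqrt x+\sqrt y$ to produce the $\bigo(\sqrt\delta)$ correction. The monotone substitution $s\mapsto s_+$ and the identification of the limiting value $a(\sqrt\rho+1)/\sqrt{\rho-1}$ are precisely what the paper does.

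On the point you flag as requiring ``genuine thought'': the paper does \emph{not} do anything more refined here. Its proof simply records $\norm{K\Reg_{\Gamma(\delta,g^\delta)}(g^\delta)-g^\delta}<\rho\delta$ and then invokes Corollary~\ref{summary:ratesestcor}~ii) directly. As you correctly note, that corollary is stated with $\norm{Ku_n^\delta-g}$, not $\norm{Ku_n^\delta-g^\delta}$, so a strict reading via the triangle inequality yields the prefactor $(\rho+1)$ rather than $\rho$. Your instinct that a sharper treatment of the cross term $\inner{p_n^\delta-p^\dagger}{g^\delta-g}$ might recover $\rho$ is, however, misplaced: under the bare hypothesis $\norm{g-g^\delta}\leq\delta$ this term is genuinely of order $\norm{p_n^\delta-p^\dagger}\,\delta\sim a\delta$, the same order as the leading term, and no cancellation is available. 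So the discrepancy you spotted is a minor slip in the paper's stated constant rather than a gap in your reasoning; your argument is complete as it stands and delivers exactly what the paper's own proof actually establishes, namely~\eqref{morozov:goodestimate} with $(\rho+1)$ in place of $\rho$.
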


\begin{proof}  
 From \eqref{morozov:sourcecondparamest} it follows that
 \begin{eqnarray*}
    \delta^2 t_{\Gamma(\delta, g^\delta)}^2 + \norm{p^\dagger -
    p_0^\delta}^2& \leq & \frac{1}{\rho - 1}\norm{p^\dagger -
    p_0^\delta}^2 + \frac{2\delta \bar\tau}{\sqrt{\rho - 1}}\norm{p^\dagger -
    p_0^\delta}  + \delta^2 \bar\tau^2 +  \norm{p^\dagger -
    p_0^\delta}^2\\
    & = & \frac{\rho}{\rho - 1}\norm{p^\dagger -
    p_0^\delta}^2 + \delta\left(\frac{2 \bar \tau}{\sqrt{\rho -
    1}}\norm{p^\dagger - p_0^\delta} + \delta \bar\tau^2\right) \\
    & = & \frac{\rho}{\rho - 1}\norm{p^\dagger - p_0^\delta}^2 + \bigo(\delta).
 \end{eqnarray*}
 This together with \eqref{morozov:sourcecondparamest} and the fact that 
$\sqrt{a+b}\leq\sqrt{a} + \sqrt{b}$ for all $a,b>0$ implies
 \begin{equation*}
    \delta t_{\Gamma(\delta, g^\delta)} + \sqrt{ \delta^2 t_{\Gamma(\delta,
    g^\delta)}^2 + \norm{p^\dagger
    - p_0^\delta}^2 } \leq \frac{\sqrt{\rho} + 1}{\sqrt{\rho -
    1}}\norm{p^\dagger - p_0^\delta} + \bigo(\sqrt{\delta}).
 \end{equation*}
 Since by construction in \eqref{morozov:disc} 
 \begin{equation*}
    \norm{K\Reg_{\Gamma(\delta, g^\delta)}(g^\delta) - g^\delta} < \rho\delta,
 \end{equation*}
 the assertion follows from Corollary \ref{summary:ratesestcor} ii).\qquad
\end{proof}
  
\begin{remark}\label{morozov:goodestrem}\upshape
    The function 
    \begin{equation*}
          f(\rho) := \frac{\rho(\sqrt{\rho}+1)}{\sqrt{\rho-1}}
    \end{equation*} 
    which appears in the right hand side of \eqref{morozov:goodestimate} is
    minimal for $\rho^*\simeq 1.6404$ with $f(\rho^*)\simeq 4.6753$. Hence,
    after setting $\rho = \rho^*$ in the stopping rule \eqref{morozov:disc},
    Theorem \ref{morozov:goodest} implies the following rough estimate
    \begin{equation*}
      D_J^{\text{sym}}(\Reg_{\Gamma(\delta, g^\delta)}(g^\delta), u^\dagger)
      < 5\norm{p_0^\delta - p^\dagger}\delta
    \end{equation*}
    as $\delta\ra 0$.
\end{remark}

\subsection{The nondegenerate case}\label{sec:nondegenerate-case}

In this section we will show that the assumptions of Theorems
\ref{summary:conv} and \ref{summary:ratesest} are satisfied for the stopping
rule \eqref{morozov:disc}, if additionally one requires
\begin{equation}\label{morozov:stoppinfty}
  \lim_{k\ra\infty}\Gamma(\delta_k, g_k) = \infty.
\end{equation} 
From Lemma \ref{morozov:pararulesc} it already follows that
\eqref{bound_strong} holds which implies applicability of Theorem
\ref{summary:ratesest}. Moreover, we find 

\begin{lemma}\label{morozov:convergence}
  Assume that $\Gamma$ is chosen according to the stopping
  rule~\eqref{morozov:disc} and that \eqref{morozov:stoppinfty} holds. 
  Then, $\Gamma(\delta_k,g_k)$ satisfies \eqref{summary:abbrev},
  i.e.~$\delta_k^2t_{\Gamma(\delta_k,g_k)}\to 0$ and $t_{\Gamma(\delta_k,g_k)}\to+\infty$, as $k\to\infty$.  
\end{lemma}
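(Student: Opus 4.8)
The plan is to prove the two assertions of \eqref{summary:abbrev} separately, treating the divergence of $t_{\Gamma(\delta_k,g_k)}$ first, as it is immediate, and then the decay of $\delta_k^2 t_{\Gamma(\delta_k,g_k)}$, which requires an adaptation of the argument behind Lemma \ref{morozov:pararulesc} that avoids the source condition. For the divergence, I would simply note that by \eqref{infty} the sequence $\{t_n\}$ is nondecreasing with $t_n\ra+\infty$, so for any $M>0$ there is an $N$ with $t_n>M$ for all $n\geq N$; by hypothesis \eqref{morozov:stoppinfty} we have $\Gamma(\delta_k,g_k)\geq N$ for all large $k$, whence $t_{\Gamma(\delta_k,g_k)}>M$ eventually. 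The same reasoning applied to $n_k^*:=\Gamma(\delta_k,g_k)-1$ (which tends to $+\infty$) shows $t_{n_k^*}\ra+\infty$, a fact I will use below.

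For the decay, the central observation is that the estimate \eqref{morozov:est} obtained in the proof of Lemma \ref{morozov:pararulesc},
\[
  \frac{(\rho-1)\delta^2 t_{n_*}}{2}\leq G(p,g)-G(p_{n_*}^\delta,g)+\frac{\norm{p-p_0^\delta}^2}{2t_{n_*}},\qquad n_*=\Gamma(\delta,g^\delta)-1,
\]
holds for \emph{every} $p\in H_2$ and its derivation relies only on the stopping rule \eqref{morozov:disc}, G\"uler's inequality \eqref{summary:guelest} and Young's inequality, so it is available here. What replaces the source condition is the fact that, since $g$ is attainable, $G(\cdot,g)$ is bounded below: writing $g=Ku$ with $u\in D(J)$ and using the Fenchel--Young inequality $J^*(K^*p)\geq\inner{K^*p}{u}-J(u)$ together with $\inner{p}{g}=\inner{K^*p}{u}$ gives $G(p,g)=J^*(K^*p)-\inner{p}{g}\geq -J(u)$, so that $m:=\inf_{p\in H_2}G(p,g)>-\infty$.

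With this in hand, the conclusion follows from an infimizing-sequence argument. Set $n_k^*=\Gamma(\delta_k,g_k)-1$, which by \eqref{morozov:stoppinfty} satisfies $n_k^*\geq 1$ for large $k$ and $t_{n_k^*}\ra+\infty$. Fix $\eps>0$ and choose $p_\eps$ with $G(p_\eps,g)\leq m+\eps$; inserting $p=p_\eps$ into the displayed estimate and bounding $-G(p_{n_k^*}^{\delta_k},g)\leq -m$ yields
\[
  \frac{(\rho-1)\delta_k^2 t_{n_k^*}}{2}\leq\eps+\frac{\norm{p_\eps-p_0^{\delta_k}}^2}{2t_{n_k^*}}.
\]
Letting $k\ra\infty$, using $t_{n_k^*}\ra+\infty$ and the boundedness of $\{p_0^{\delta_k}\}$ (e.g.\ $p_0^\delta\equiv p_0$), the last term vanishes, so $\limsup_k\delta_k^2 t_{n_k^*}\leq 2\eps/(\rho-1)$; since $\eps>0$ is arbitrary, $\delta_k^2 t_{n_k^*}\ra 0$. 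Finally, $t_{\Gamma(\delta_k,g_k)}=t_{n_k^*}+\tau_{\Gamma(\delta_k,g_k)}$ with $\tau_n\leq\bar\tau$ gives $\delta_k^2 t_{\Gamma(\delta_k,g_k)}\leq\delta_k^2 t_{n_k^*}+\bar\tau\delta_k^2\ra 0$, as desired. The main obstacle, and the reason one cannot just invoke Lemma \ref{morozov:pararulesc}, is that without the source condition $G(\cdot,g)$ need not attain its infimum, so the substitution $p=p^\dagger$ used there is unavailable; the $\eps$-argument above, made possible by the boundedness below from attainability and by $t_{n_k^*}\ra+\infty$, is precisely what circumvents this.
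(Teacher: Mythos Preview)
Your proof is correct and follows essentially the same approach as the paper: both use the estimate \eqref{morozov:est} with an $\eps$-infimizing element $p_\eps$ for $G(\cdot,g)$ (whose boundedness below is ensured by attainability of $g$), pass to the limit using $t_{n_k^*}\ra+\infty$, and then transfer the conclusion from $t_{n_k^*}$ to $t_{\Gamma(\delta_k,g_k)}$ via $\tau_n\leq\bar\tau$. The only cosmetic difference is that you spell out the Fenchel--Young argument for $\inf G(\cdot,g)>-\infty$ explicitly, whereas the paper cites a lemma.
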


\begin{proof}
  Let $\eps> 0$ and choose $p_\eps \in H_2$ such that $G(p_\eps, g)
  \leq \inf_{q\in H_2} G(q,g) + \eps$ (note that, due to \cite[Lem. 4.1]{FriSch10}, the
  right hand side is finite whenever $g$ is attainable). This together with the estimate
  \eqref{morozov:est} in the proof of Theorem \ref{morozov:goodest} shows
  \begin{equation*}
    \frac{(\rho-1) \delta^2 t_{n_*}}{2} \leq \eps + \frac{\norm{p_\eps - p_0^\delta}^2}{2 t_{n_*}}.
  \end{equation*}
According to \eqref{infty}, the conditions $\tau_k \leq \bar \tau$ for all $k\in\N$, and 
$\lim_{k\ra\infty}  {\Gamma(\delta_k, g_k)} = \infty$ imply
$\lim_{k\ra\infty} t_ {\Gamma(\delta_k, g_k)} =+\infty$. Hence, substituting $g_k$ for $g^\delta$, $\delta_k$ for $\delta$, and
  $\Gamma(\delta_k, g_k)-1$ for $n_*$ shows
  \begin{equation*}
    \limsup_{k\ra\infty} \delta_k^2 t_{\Gamma(\delta_k, g_k)} \leq
    \limsup_{k\ra\infty}\left( \frac{2\eps}{\rho -1} + \frac{\norm{p_\eps -
    p_0^\delta}^2}{2 t_{\Gamma(\delta_k, g_k)-1}} + \delta_k^2 \bar\tau \right)=
    \frac{2\eps}{\rho -1}.
  \end{equation*}
  Since $\eps$ is arbitrary, this proves the statement.\qquad
\end{proof}

Combining the above results with Theorem \ref{summary:conv} yields
results on convergence for Morozov's discrepancy principle
as a stopping rule:

\begin{corollary}\label{cor_conv} Assume that $\Gamma$ is chosen as in
\eqref{morozov:disc}  and that \eqref{morozov:stoppinfty} holds. Then, the
  sequence $\set{\Reg_{\Gamma(\delta_k, g_k)}(g_k)}_{k\in\N}$ is bounded and each
  weak cluster point $u^{\dagger}$ is a $J$-minimizing solution of $Ku=g$.
  Additionally, \eqref{summary:conv_convfunc} and \eqref{summary:conv_convimg}  hold.
\end{corollary}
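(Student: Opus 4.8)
The plan is to obtain the statement directly by chaining the two results that immediately precede it, with no new analysis required. The hypotheses assumed here, namely that $\Gamma$ is given by \eqref{morozov:disc} and that \eqref{morozov:stoppinfty} holds, are exactly the hypotheses of Lemma \ref{morozov:convergence}. So the first step is simply to invoke that lemma and conclude that the chosen stopping rule satisfies \eqref{summary:abbrev}; that is, $\delta_k^2 t_{\Gamma(\delta_k, g_k)}\to 0$ and $t_{\Gamma(\delta_k, g_k)}\to+\infty$ as $k\to\infty$.

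The second step is to feed \eqref{summary:abbrev} into Theorem \ref{summary:conv}. Since \eqref{summary:abbrev} is the sole hypothesis of that theorem, its conclusions transfer verbatim: the sequence $\set{\Reg_{\Gamma(\delta_k, g_k)}(g_k)}_{k\in\N}$ is bounded, each of its weak cluster points is a $J$-minimizing solution of $Ku=g$, and both \eqref{summary:conv_convfunc} and \eqref{summary:conv_convimg} hold. This closes the argument.

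There is no genuine obstacle here, since all of the substantive work has been absorbed upstream: the analytic content driving \eqref{summary:abbrev} lives in the proof of Lemma \ref{morozov:convergence}, which combines G\"uler's estimate \eqref{summary:guelest} with an $\eps$-approximate minimizer of $G(\cdot,g)$ to bound $\delta_k^2 t_{\Gamma(\delta_k, g_k)}$, while the convergence assertions themselves are Theorem \ref{summary:conv}. The only thing worth double-checking is the matching of hypotheses and conclusions across the two results, namely that the conditions of the corollary coincide with those of Lemma \ref{morozov:convergence} and that the lemma's output \eqref{summary:abbrev} is precisely the input Theorem \ref{summary:conv} requires. Both matches are exact, so the corollary follows at once.
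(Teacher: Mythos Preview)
Your proposal is correct and matches the paper's approach exactly: the corollary is stated immediately after Lemma~\ref{morozov:convergence} with the one-line justification that combining that lemma with Theorem~\ref{summary:conv} yields the result. There is nothing to add.
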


If additionally the source condition is satisfied, Lemma
\ref{morozov:convergence} and Theorem \ref{summary:ratesest} imply

\begin{corollary}\label{cor_conv_2} Let the assumptions of Corollary \ref{cor_conv}
be satisfied and assume that there exists a solution $u^\dagger$ of
\eqref{intro:constrained} which verifies the source condition with source
element $p^\dagger$. Then, \eqref{summary:conv_convimg1} holds and each weak cluster point of 
    $\set{\Reg^*_{\Gamma(\delta_k, g_k)}(g_k)}_{k\in\N}$ is a minimizer of
    $G(\cdot,g)$.
\end{corollary}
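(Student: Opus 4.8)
The plan is to read Corollary \ref{cor_conv_2} as an immediate application of the equivalence in Theorem \ref{summary:ratesest}, whose hypotheses are all furnished by the two lemmas of this section. First I would check the standing assumption of Theorem \ref{summary:ratesest}, namely $\lim_{k\ra\infty} t_{\Gamma(\delta_k, g_k)} = +\infty$. Since the assumptions of Corollary \ref{cor_conv} include that $\Gamma$ is chosen by Morozov's rule \eqref{morozov:disc} and that \eqref{morozov:stoppinfty} holds, Lemma \ref{morozov:convergence} delivers exactly this (in fact the stronger $\delta_k^2 t_{\Gamma(\delta_k, g_k)} \to 0$, which is not needed here).

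Next I would verify condition (i) of Theorem \ref{summary:ratesest}. Its source-condition half is precisely the extra hypothesis of Corollary \ref{cor_conv_2}: there is a $J$-minimizing solution $u^\dagger$ with $K^*p^\dagger \in \partial J(u^\dagger)$. Its quantitative half, the bound \eqref{bound_strong} of the form $\delta_k t_{\Gamma(\delta_k, g_k)} \leq C$, is exactly the conclusion of Lemma \ref{morozov:pararulesc}, whose estimate \eqref{morozov:sourcecondparamest} reads $\delta t_{\Gamma(\delta, g^\delta)} \leq \norm{p^\dagger - p_0^\delta}/\sqrt{\rho - 1} + \delta\bar\tau$. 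With both halves of (i) in place, the theorem yields condition (ii), which is verbatim the first assertion \eqref{summary:conv_convimg1}, and its concluding clause gives that every cluster point of $\set{\Reg^*_{\Gamma(\delta_k, g_k)}(g_k)}_{k\in\N}$ minimizes $G(\cdot, g)$, which is the second assertion.

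There is no genuine obstacle here; the only points requiring a line of care are bookkeeping. One is that the constant $C$ in \eqref{bound_strong} must be uniform in $k$, so one uses that $\delta_k\bar\tau \to 0$ and that the initializations $p_0^{\delta_k}$ stay bounded, both of which are already absorbed into the conclusion of Lemma \ref{morozov:pararulesc}. The other is the identification of the ``cluster point'' of Theorem \ref{summary:ratesest} with the ``weak cluster point'' of Corollary \ref{cor_conv_2}; this is harmless, because the boundedness $\norm{\Reg^*_{\Gamma(\delta_k, g_k)}(g_k)} = \bigo(1)$ from \eqref{summary:conv_convimg1} guarantees that the dual iterates possess weak cluster points, and it is precisely these that Theorem \ref{summary:ratesest} identifies as minimizers of $G(\cdot, g)$.
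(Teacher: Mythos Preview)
Your proposal is correct and follows precisely the paper's own route: the text immediately preceding the corollary states that Lemma~\ref{morozov:pararulesc} supplies \eqref{bound_strong}, Lemma~\ref{morozov:convergence} supplies $t_{\Gamma(\delta_k,g_k)}\to+\infty$, and then Theorem~\ref{summary:ratesest} yields both conclusions. Your additional bookkeeping remarks on the uniformity of the constant and on weak versus general cluster points are apt clarifications but do not alter the argument.
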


\begin{remark}\label{morozov:cor_conv_better_rem}\upshape
 From Schauder's Theorem and from $\overline{\ran(K)}
    = \ker(K^*)^\bot$ it follows that for each compact $K$ with dense range,
    the adjoint operator $K^*$ is compact and injective and hence
    \begin{equation*}
          \lim_{k\ra\infty} K^*\Reg^*_{\Gamma(\delta_k, g_k)}(g_k) = K^*\bar p
    \end{equation*} 
    strongly, where $\bar p$ is a minimizer of $G(\cdot,g)$. If the condition
    on the range of $K$ is not satisfied, then strong convergence hold on
    subsequences. 
\end{remark}

\subsection{The degenerate case}\label{sec:degenerate-case}

We will finally discuss the case when the stopping index chosen by Morozov's
discrepancy principle degenerates, that is, when there exists an $N \in \N$ such
that
\begin{equation}\label{morozov:deg}
  \limsup_{\delta \ra 0^+} \Gamma(\delta, g^\delta) = N.
\end{equation}
In this case, the assumption \eqref{morozov:stoppinfty} is not satisfied and
the results of Section \ref{sec:nondegenerate-case} do not apply in general.

The following result shows, however, that a degenerate stopping rule as in
\eqref{morozov:deg} already implies that the true solutions of
\eqref{intro:constrained} satisfy the source condition \eqref{defs:sceqn} and
hence the results in Section \ref{morozov:qualitative} hold.  Moreover, the
convergence (on subsequences) of the dual sequence also follows. 

\begin{theorem}\label{morozov:degconv}
  Let $\Gamma:(0,\infty)\times H_2 \ra \N$ be as in
  \eqref{morozov:disc} and assume that \eqref{morozov:deg} holds.  Then, the
  following assertions are true:
  \begin{enumerate}[i)]
    \item The set $\set{p_N^\delta}_{\delta > 0}$ is bounded and each of its weak cluster points is a minimizer of $G(\cdot,g)$.
    \item  The set $\set{u_N^\delta}_{\delta > 0}$ is bounded
    and each of its weak cluster points is a $J$-minimizing
    solution of $Ku=g$.
    \item All $J$-minimizing solutions of $Ku=g$ satisfy the source
    condition with a source element $p^\dagger$.
    \item
    \begin{equation}\label{morozov:degconv_convfunc}
      \norm{K u_N^\delta- g} < (\rho + 1)\delta\quad\text{ and }\quad
      D_J^\text{sym}(u^\delta_N, u^\dagger) = \bigo(\delta).
    \end{equation}
  \end{enumerate}
\end{theorem}

\begin{proof}
  The
  definition of $\Gamma(\delta, g^\delta)$ in \eqref{morozov:disc} and the monotonicity of the residual $\norm{Ku_n^\delta - g^\delta}$ (cf. \cite[Cor. 3.3]{FriSch10}) imply
  \begin{equation}\label{morozov:degpr1}
    \norm{ K u_N^\delta - g^\delta} \leq \rho \delta,\quad \text{ for all }\delta
    > 0.
  \end{equation}
  In particular, this yields $\norm{ K u_N^\delta - g} \leq (\rho+1)\delta$.
  
  It was shown in the proof of \cite[Thm 5.3]{FriSch10} (by using
  G\"uler's estimate \eqref{summary:guelest} and Young's inequality) that
  \begin{equation*}
    \norm{p - p_N^\delta}^2 \leq 2\norm{p - p_0^\delta}^2 +
    4 t_N^2 \delta^2 + 4t_N(G(p,g) - \inf_{q \in H_2} G(q,g))\quad \text{ for all
    }p\in H_2.
  \end{equation*}
  Choosing an arbitrary $p$ such that $G(p,g) < 0$ implies   
  \begin{equation*}\label{morozov:degdualbnd}
    \limsup_{\delta \ra 0^+} \norm{p_N^\delta} =: A < \infty.
  \end{equation*}
  Now, let $\set{\delta_k}_{k\in\N}$ be such that $\delta_k \ra 0^+$ and that
  $p_N^{\delta_k} \rightharpoonup \hat p \in H_2$. Due to the dual
  characterization \eqref{summary:dual} and to the equality  $p_{N-1}^\delta -
  p_N^\delta = \tau_N (K u_N^\delta - g^\delta)$, it follows that $K
  u^{\delta_k}_N - g^{\delta_k} \in \partial G(\cdot,
  g^{\delta_k})(p_N^{\delta_k})$. Since $G(p,g) = G(p,g^{\delta_k}) +
  \inner{p}{g^{\delta_k} - g}$ for all $p\in H_2$, one has
  \begin{equation*}
    K u^{\delta_k}_N - g \in \partial G(\cdot, g)(p_N^{\delta_k}).
  \end{equation*}
  Recall that the graph of the subgradient of a convex and lower semi-continuous
  functional is weakly-strongly closed. Therefore, inequality 
  \eqref{morozov:degpr1} yields
  \begin{equation*}
    0 = \lim_{k\ra\infty} K u^{\delta_k}_N - g \in \partial G(\cdot,
    g)(\wlim_{k\ra\infty} p_N^{\delta_k}) = \partial G(\cdot, g)(\hat p).
  \end{equation*}
  This proves i).
  
  From the definition of $u^\delta_N$ in \eqref{intro:almobjective}
  and the fact that $p_{N-1}^\delta - p_N^\delta = \tau_N (K u_N^\delta -
  g^\delta)$ it follows (for $\delta$ small enough)
  \begin{equation*}
    \begin{split}
      \frac{\tau_N}{2}\norm{K u^\delta_N - g^\delta} + J(u^\delta _N) & \leq
      \frac{\tau_N}{2}\delta^2 + J(u^\dagger) + \inner{p_{N-1}^\delta}{g - K
      u^\delta_N} \\
      & \leq \frac{\tau_N}{2}\delta^2 + J(u^\dagger) + A(\rho+1)\delta +
      \tau_N \rho(\rho+1)\delta^2.
    \end{split}
  \end{equation*}
  In other words, $J(u^\delta_N) - J(u^\dagger)  = \bigo(\delta)$ as $\delta \ra
  0^+$. This together with \eqref{morozov:degpr1} shows that
  $\sup_{\delta > 0} \left\{J(u^\delta_N) + \norm{Ku_N^\delta}\right\} < \infty$ and
  consequently, according to Assumption \ref{defs:compactnessass}, that
  $\set{u_N^\delta}_{\delta>0}$ is weakly compact and hence bounded. Thus, ii) follows from \eqref{morozov:degpr1} and the lower semi-continuity of $J$.
  
  Let $p^\dagger$ be a minimizer of $G(\cdot,g)$, which exists according to i).
  This and the definition of $G(p,g)$ in \eqref{summary:descent} implies
  \begin{equation*}
    G(p^\dagger, g) - G(p_N^\delta, g^\delta) \leq \delta \norm{p_N^\delta}.
  \end{equation*}
  Moreover, we deduce from the optimality condition of \eqref{intro:almobjective} that
  $K^*p_N^\delta \in \partial J(u_N^\delta)$, which in turn implies that
  $Ku_N^\delta \in \partial (J^*\circ K^*)(p_N^\delta)$. Using the definition of
  the subgradient and some rearrangements give
  \begin{equation*}
    G(p^\dagger, g) - G(p_N^\delta, g^\delta) \geq - \delta
    \left(\norm{p^\dagger} + \norm{p_N^\delta}\right).
  \end{equation*}
  Since $\set{\norm{p_N^\delta}}_{\delta > 0}$ is bounded according to ii), the
  previous two estimates result in
  \begin{equation}\label{morozov:degpr2}
    \lim_{\delta\ra 0^+} J^*(K^*p_N^\delta) - \inner{p_N^\delta}{
    g^\delta}  = \lim_{\delta\ra 0^+} G(p_N^\delta, g^\delta) = G(p^\dagger, g) =
    J^*(K^*p^\dagger) - \inner{p^\dagger}{g}.
  \end{equation}
  Using once more the relation $K^*p_N^\delta \in \partial J(u_N^\delta)$ shows
  that $J^*(K^* p_N^\delta) + J(u_N^\delta) = \inner{K^* p_N^\delta}{u_N^\delta}$
  and consequently
  \begin{equation*}
    J^*(K^* p_N^\delta) - \inner{p_N^\delta}{g^\delta} + J(u_N^\delta) =
    \inner{Ku_N^\delta - g^\delta}{p_N^\delta}.
  \end{equation*}
  Now, let $u^\dagger$ be a $J$-minimizing solution of $Ku=g$ which
  exists according to ii). Taking the limit $\delta\ra 0^+$ in the previous
  equality, using \eqref{morozov:degpr1},
  \eqref{morozov:degpr2}, as well as the boundedness of $\set{p_N^\delta}_{\delta>
  0}$ and the fact that $J(u_N^\delta) \ra J(u^\dagger)$ result in
  \begin{equation*}
    J(u^\dagger) + J^*(K^* p^\dagger) = \inner{p^\dagger}{g} =
    \inner{K^*p^\dagger}{u^\dagger}
  \end{equation*}
  that is, $K^*p^\dagger \in \partial J(u^\dagger)$. This proves iii).
  
  Statement iv)  follows from i), iii) and Corollary
  \ref{summary:ratesestcor} ii) together with the first inequality in
  \eqref{morozov:degconv_convfunc}.\qquad
\end{proof}

\begin{remark}\upshape
As  $\{\Gamma(\delta, g^\delta)\}_{\delta>0}$ has finite
accumulation points, without restricting generality, we can consider that this is a constant
subsequence. This yields that for all $\delta$ sufficiently small, one has to
stop the algorithm at the same iteration. 

A degenerate case is discussed for the
Landweber method for nonlinear equations in the book \cite[p. 284]{EngHanNeu96}.
It is shown there that $\lim_{\delta\ra 0} u^\delta_N=u_N$  where $u_N$ is the
$N$-th iterate in the exact data case and is a solution of the operator
equation as well. This means that in the exact data case the Landweber algorithm reaches
the solution after $N$ steps, with $N$ being the stopping index in the noisy data
case. 

For the ALM analyzed here, we could not show  that  $\lim_{\delta\ra
0} u^\delta_N=u_N$  where $u_N$ is the $N$-th iterate in the exact data case
because the implicit feature of the method makes the analysis more difficult.
However, we could establish that the accumulation points of 
$\{u^\delta_N\}_{\delta>0}$ are $J$ - minimizing solutions with additional
smoothness, i.e.,  satisfying the source condition.
\end{remark}

The results for the two cases are briefly summarized in the following corollary.

\begin{corollary}  Let $\Gamma:(0,\infty)\times H_2 \ra \N$ be  chosen according to Morozov's rule
  \eqref{morozov:disc}. Then, as $\delta\ra 0$, the stopping index $\Gamma$ either increases and leads to weak convergence of the ALM algorithm on subsequences   to solutions of the operator equation  or is constant, in which case the corresponding ALM iterates  converge weakly on subsequences to a solution of the equation  satisfying the source condition. 
\end{corollary}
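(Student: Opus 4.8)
The plan is to reduce the statement to the two situations already treated in Sections~\ref{sec:nondegenerate-case} and~\ref{sec:degenerate-case}, exploiting the elementary fact that the integer sequence produced by the stopping rule is, along any subsequence, either bounded or unbounded. First I would fix an arbitrary sequence $\delta_k \ra 0^+$ and abbreviate $n_k := \Gamma(\delta_k, g_k) \in \N$. The whole dichotomy then hinges solely on whether $\set{n_k}_{k\in\N}$ is bounded; no further analytic input is needed at this stage.

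In the first (nondegenerate) case, $\set{n_k}$ is unbounded, so there is a subsequence along which $n_{k_j} \ra \infty$, i.e.\ \eqref{morozov:stoppinfty} holds for this subsequence. By Lemma~\ref{morozov:convergence} the pair $(\delta_{k_j}, n_{k_j})$ then satisfies \eqref{summary:abbrev}, and Corollary~\ref{cor_conv} applies: the iterates $\Reg_{\Gamma(\delta_{k_j}, g_{k_j})}(g_{k_j})$ are bounded and each of their weak cluster points is a $J$-minimizing solution of $Ku = g$. Passing to a further weakly convergent subsequence, whose existence is guaranteed by Assumption~\ref{defs:compactnessass}, then yields weak convergence to a solution of the operator equation, as claimed.

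In the second (degenerate) case, $\set{n_k}$ is bounded, hence attains only finitely many values, and $\limsup_{k} n_k = N$ for some $N\in\N$; that is, \eqref{morozov:deg} holds. Following the remark after Theorem~\ref{morozov:degconv}, I would extract a constant subsequence with $n_{k_j} \equiv N$, so that the algorithm is stopped at the same index for all sufficiently small $\delta$. Theorem~\ref{morozov:degconv}, parts ii) and iii), then shows that $\set{u_N^{\delta_{k_j}}}$ is bounded, that each of its weak cluster points is a $J$-minimizing solution of $Ku=g$, and---this is exactly the feature distinguishing the degenerate regime---that every such solution satisfies the source condition \eqref{defs:sceqn}. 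Selecting a weakly convergent subsequence produces the asserted weak convergence to a solution satisfying the source condition.

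I expect the argument to be essentially bookkeeping, since the substantive analytic work has already been carried out in Corollary~\ref{cor_conv} and Theorem~\ref{morozov:degconv}. The only point requiring mild care is the logical structure of the dichotomy together with the repeated passage to subsequences: one must check that the hypothesis \eqref{morozov:stoppinfty} of the nondegenerate theory, although phrased for a full sequence, is legitimately invoked along the extracted subsequence, and that in the degenerate case the remark indeed licenses the reduction to a constant stopping index. Neither of these introduces a genuine obstacle.
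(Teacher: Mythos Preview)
Your proposal is correct and matches the paper's intent: the corollary is stated in the paper as a brief summary of the two preceding cases with no separate proof, and your argument does precisely what the paper implicitly relies on, namely invoking Corollary~\ref{cor_conv} in the nondegenerate case and Theorem~\ref{morozov:degconv} in the degenerate case after the obvious bounded/unbounded dichotomy for $\set{\Gamma(\delta_k,g_k)}$.
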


%%%%%%%%%%%%%%%%%%%%%%%%%%%%%%%%%%%%%%%%%%%%%%%%%%%%%%%%%%%%%%%%%%%%%%%%%%%%%%%%%%%%%%%%%%%%%%%
\section{Iterative total variation regularization}\label{sec:tv-regul}

The ALM method in the case of $J$ being the total variation
seminorm~\eqref{intro:tv} is also known as Bregman iteration
\cite{OshBurGolXuYin05}. It was shown in \cite{OshBurGolXuYin05} that Morozov's
discrepancy principle yields weak$^*$ convergence in $\BV$ of the
iterative method.  The expected but missing convergence  there was the one  with
respect to the total variation seminorm, in the sense
 \begin{equation}\label{j-conv}
\lim_{k\ra\infty} J(u_k)=J(u).
\end{equation}
  As a consequence of the analysis based on the augmented Lagrangian method tools, it became clear that this convergence does hold.  Moreover, linear convergence rates with respect to the Bregman distance associated with the total variation seminorm were established in \cite{BurResHe07} first for the noise free case.  According to \cite{BurOsh04} and due to the symmetric Bregman distance estimates pointed out in this work, such convergence rates provide information on the fine structure of the iterates, that is, the variation of the iterates is concentrated around the discontinuities set of the true solution. In the noisy data case, an a posteriori  stopping rule was proposed in \cite{OshBurGolXuYin05}:
\[
n_*(\delta,g^\delta)=\max\set{n\in\N: \|Ku_n^\delta-g^\delta\|\geq\rho\delta},
\quad\rho>1.  
\]
Although convergence was shown there for the net
$\{u_{n_*(\delta,g^\delta)}^\delta\}$ as $\delta\ra 0$, no convergence rate was obtained for it. This section aims to point out such a convergence rate. Note
that the a posteriori rule~\eqref{morozov:disc} employed here relates to the
above mentioned one by
\begin{equation*}
  \Gamma(\delta, g^\delta) = n_*(\delta, g^\delta)+1.
\end{equation*}

Still, the question on how to quantify the weak$^*$ convergence is not answered.
A possible answer could be given by taking into account that weak$^*$
convergence in $\BV$ together with convergence in the sense \eqref{j-conv} is
equivalent to so-called \textit{strict convergence}. Thus, one can obtain
convergence rates with respect to a related metric,  as shown below.
Recall \cite[page 125]{AmbFusPal00} that $\{u_ k\}_{k\in\N}\subset \BV$
converges \emph{strictly} to $u$ if it converges with respect to the metric
\begin{equation}\label{metric}
  \tilde d(u,v)=\|u-v\|_{\Ls{1}}+|J(u)-J(v)|.
\end{equation}
In this section we consider the linear and bounded operator $K:\L{2}\ra
\L{2}$,  where $\Omega\subset {\mathbb{R}}^2$ is open and bounded.
\begin{proposition}
  Let $\set{g_ k}_{ k\in\N} \subset \L{2}$ be such that
  $\norm{g-g_ k} \leq \delta_ k\ra 0$ as $ k\ra\infty$. Let $\Gamma$ be chosen according to the Morozov's rule \eqref{morozov:disc} and assume that $\lim_{
    k\ra\infty} \Gamma(\delta_ k, g_ k) = \infty$. Then, the sequence
  $\set{\Reg_{\Gamma(\delta_ k, g_ k)}(g_ k)}_{ k\in\N}$ satisfies 
  \eqref{summary:conv_convfunc} and \eqref{summary:conv_convimg}. Moreover, it
  has a subsequence which converges strictly to  a $J$-minimizing solution of $Ku=g$.
\end{proposition}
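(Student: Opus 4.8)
The plan is to read off the two convergence statements \eqref{summary:conv_convfunc} and \eqref{summary:conv_convimg} directly from the abstract theory already developed, and then to upgrade the abstract weak convergence to strict convergence in $\BV$ by exploiting the compactness of the embedding $\BV\hookrightarrow\L{1}$ available in the present concrete setting. Write $u_k:=\Reg_{\Gamma(\delta_k,g_k)}(g_k)$. Since $\Gamma$ is the Morozov rule \eqref{morozov:disc} and $\Gamma(\delta_k,g_k)\to\infty$ by hypothesis, Corollary \ref{cor_conv} applies (in this $\L{2}$ setting the standing Assumption \ref{defs:compactnessass} reduces, by reflexivity of $\L{2}$, to $\L{2}$-boundedness of the relevant sublevel sets). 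Thus $\set{u_k}$ is bounded in $\L{2}$, each of its weak cluster points is a $J$-minimizing solution of $Ku=g$, and both \eqref{summary:conv_convfunc} and \eqref{summary:conv_convimg} hold. This settles the first assertion of the proposition with no extra work.

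For the strict convergence I would first assemble the bounds needed to produce an $\L{1}$-convergent subsequence. From Corollary \ref{cor_conv} the family $\set{u_k}$ is bounded in $\L{2}$, hence in $\L{1}$ since $\Omega$ is bounded; moreover $J(u_k)=\abs{\D u_k}(\Omega)$ is bounded because, by \eqref{summary:conv_convfunc}, $J(u_k)\to J(u^\dagger)<\infty$. Consequently $\set{u_k}$ is bounded in $\BV$, and the compactness of the embedding $\BV\hookrightarrow\L{1}$ (see \cite{AmbFusPal00}) yields a subsequence $\set{u_{k_j}}$ converging strongly in $\L{1}$ to some $u^\ast\in\BV$.

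The crucial step is then to identify $u^\ast$ as a $J$-minimizing solution. Since $\set{u_{k_j}}$ is bounded in the Hilbert space $\L{2}$, a further (not relabelled) subsequence converges weakly in $\L{2}$; because both strong $\L{1}$ convergence and weak $\L{2}$ convergence entail convergence in the sense of distributions, the two limits coincide and the weak $\L{2}$-limit is again $u^\ast$. Hence $u^\ast$ is a weak $\L{2}$-cluster point of $\set{u_k}$, and Corollary \ref{cor_conv} guarantees that $u^\ast$ is a $J$-minimizing solution of $Ku=g$. In particular $J(u^\ast)$ equals the minimal value, so that $J(u_{k_j})\to J(u^\dagger)=J(u^\ast)$ by \eqref{summary:conv_convfunc}.

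Finally I would conclude from the metric \eqref{metric} that, along the chosen subsequence,
\begin{equation*}
  \tilde d(u_{k_j},u^\ast)=\norm{u_{k_j}-u^\ast}_{\Ls{1}}+\abs{J(u_{k_j})-J(u^\ast)}\longrightarrow 0,
\end{equation*}
the first term vanishing by the $\L{1}$ convergence of the second step and the second by the value convergence just recorded; hence $\set{u_{k_j}}$ converges strictly to the $J$-minimizing solution $u^\ast$. The main obstacle is precisely the identification carried out above: one must ensure that the subsequence extracted for $\L{1}$ compactness and the one extracted for weak $\L{2}$ compactness share a common limit, so that Corollary \ref{cor_conv} can be invoked to certify that $u^\ast$ solves $Ku=g$ with minimal $J$. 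Everything else — the $\BV$ bound, the compact embedding, and the evaluation of \eqref{metric} — is routine once this matching of limits, which rests on the compatibility of both modes of convergence with the distributional topology, is in place.
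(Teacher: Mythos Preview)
Your proof is correct and follows essentially the same route as the paper's: invoke Corollary~\ref{cor_conv} for the first assertions and the $\L{2}$-bound, combine this with the bound on $J(u_k)$ to get a $\BV$-bound, use compact embedding into $\L{1}$ to extract an $\L{1}$-convergent subsequence, identify its limit as a $J$-minimizing solution via the weak $\L{2}$-cluster-point statement of Corollary~\ref{cor_conv}, and finish with~\eqref{summary:conv_convfunc}. The only difference is that you spell out explicitly the matching of the $\L{1}$ and weak $\L{2}$ limits through distributional convergence, which the paper leaves implicit (and you cite \cite{AmbFusPal00} rather than \cite{AcaVog94} for the compactness).
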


\begin{proof}
The first assertions result from Corollary \ref{cor_conv}. Let further denote
$u_k= \Reg_{\Gamma(\delta_ k, g_ k)}(g_ k)$. According to Corollary
\ref{cor_conv},  the sequence $\set{u_k}_{k\in\N}$  is bounded in $\L{2}$ and
$\sup_{k\in\N} J(u_k) < \infty$. Hence we find that
\begin{equation*}
  \sup_{k\in\N}\norm{u_k}_{\TV} = \sup_{k\in\N} \norm{u}_{\Ls{1}} + J(u_k) <
  \infty.
\end{equation*} 
Theorem 2.5 in \cite{AcaVog94} implies that $\set{u_k}_{k\in\N}$ is
strongly $\Ls{1}$-compact and thus there is a subsequence, indexed by
$k'$, which converges to some $u^*$ strongly in $\L{1}$. Since each $\Ls{2}$-weak
cluster point of $\set{u_k}_{k\in\N}$ is a $J$-minimizing solution of $Ku=g$ according
to Corollary \ref{cor_conv}, the same holds for $u^*$. Finally, it follows from
\eqref{summary:conv_convfunc} that $\tilde d(u_{k'}, u^*) \ra 0$.\qquad
\end{proof}

Clearly, error estimates in terms of the $\Ls{1}$-norm are desirable, but not easy to derive. In order to show convergence rates for strict convergence of the iterates,  we need to employ another metric, which appears naturally in the analysis, namely
\begin{equation}\label{equiv_metric}
  d(u,v)=\norm{Ku-Kv}_{\Ls{2}}+\abs{J(u)-J(v)}.
\end{equation}

The following lemma points out the relation between the two metrics described above.

\begin{lemma}
Assume that $K:\L{1}\ra \L{2}$ is continuous
and can be extended by continuity to $\L{2}$. Then, convergence of a sequence with respect to the metric $\tilde
d$ defined by \eqref{metric} implies convergence of the sequence with respect to the metric $d$
defined by \eqref{equiv_metric}. 
If additionally the linear  bounded operator $K:\L{2}\ra \L{2}$ is injective,
then the two metrics are equivalent.
\end{lemma}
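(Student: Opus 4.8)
The plan is to prove the two directions separately, with the forward implication (that $\tilde d$-convergence implies $d$-convergence) being immediate and the equivalence under injectivity requiring a compactness argument. For the forward direction, suppose $\tilde d(u_k,u)\ra 0$, so that $\norm{u_k-u}_{\Ls{1}}\ra 0$ and $\abs{J(u_k)-J(u)}\ra 0$. The continuity of $K$ from $\L{1}$ to $\L{2}$ furnishes a constant $C>0$ with $\norm{Kv}_{\Ls{2}}\leq C\norm{v}_{\Ls{1}}$ (and on $\L{2}$ this extension agrees with the operator appearing in $d$), so I would estimate $\norm{Ku_k-Ku}_{\Ls{2}}\leq C\norm{u_k-u}_{\Ls{1}}\ra 0$. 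Adding the unchanged second terms gives $d(u_k,u)\ra 0$ at once, with no use of injectivity.

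For the converse, assume now that $K:\L{2}\ra\L{2}$ is injective and that $d(u_k,u)\ra 0$, i.e.\ $Ku_k\ra Ku$ in $\L{2}$ and $J(u_k)\ra J(u)$; the goal is $\norm{u_k-u}_{\Ls{1}}\ra 0$. The first step is to show that $\set{u_k}$ is bounded in $\BV$. I would split $u_k=(u_k-\bar u_k)+\bar u_k$ into its mean $\bar u_k=\abs{\Omega}^{-1}\int_\Omega u_k$ and its mean-free part. The total variation is unaffected by the additive constant, so $J(u_k-\bar u_k)=J(u_k)$ is bounded, and the Poincar\'e--Sobolev inequality for $\BV$ in dimension two \cite{AmbFusPal00} bounds $\norm{u_k-\bar u_k}_{\Ls{2}}$ by $C\,J(u_k)$. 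Writing $Ku_k=K(u_k-\bar u_k)+\bar u_k\,K\eye$, where $\eye$ denotes the constant function one, and using that both $\norm{Ku_k}_{\Ls{2}}$ and $\norm{K(u_k-\bar u_k)}_{\Ls{2}}$ are bounded, I obtain that $\abs{\bar u_k}\,\norm{K\eye}_{\Ls{2}}$ is bounded. Here injectivity enters decisively: $K\eye\neq 0$, hence $\norm{K\eye}_{\Ls{2}}>0$ and $\abs{\bar u_k}$ is bounded. Combining the two contributions bounds $\norm{u_k}_{\Ls{2}}$, and therefore $\norm{u_k}_{\Ls{1}}$ and $\norm{u_k}_{\TV}=\norm{u_k}_{\Ls{1}}+J(u_k)$, uniformly in $k$.

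With $\BV$-boundedness in hand, Theorem 2.5 in \cite{AcaVog94} renders $\set{u_k}$ precompact in $\L{1}$. I would then argue by the subsequence principle: given any subsequence, extract a further subsequence $u_{k'}\ra u^*$ in $\L{1}$. Lower semicontinuity of $J$ places $u^*\in\BV\subset\L{2}$, and continuity of $K$ from $\L{1}$ to $\L{2}$ gives $Ku_{k'}\ra Ku^*$; since also $Ku_{k'}\ra Ku$ in $\L{2}$, we conclude $Ku^*=Ku$, whence injectivity forces $u^*=u$. As every subsequence thus has a further subsequence converging to the same limit $u$ in $\L{1}$, the entire sequence satisfies $u_k\ra u$ in $\L{1}$; together with $J(u_k)\ra J(u)$ this yields $\tilde d(u_k,u)\ra 0$, establishing the equivalence.

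I expect the main obstacle to be the $\BV$-boundedness step, and specifically the control of the means $\bar u_k$: the mean-free part is handled cheaply by the Poincar\'e estimate, but the constant component of $u_k$ is invisible to $J$ and is seen only through $K$, so it must be routed through $K\eye\neq 0$. This is exactly where injectivity is indispensable, and it also explains why the plain (non-injective) hypothesis yields only the one-sided implication: a nontrivial kernel could let the means drift while leaving both $Ku_k$ and $J(u_k)$ convergent.
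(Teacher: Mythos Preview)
Your argument is correct and follows the same overall strategy as the paper: the forward implication is immediate, and for the converse you establish $\TV$-boundedness, invoke the $\Ls{1}$-compactness result \cite[Thm.~2.5]{AcaVog94}, and identify the limit via injectivity of $K$.

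There are two minor technical differences. First, for $\TV$-boundedness the paper simply cites \cite[Lem.~4.1]{AcaVog94}, which asserts that $u\mapsto\norm{Ku}_{\Ls{2}}+J(u)$ is $\TV$-coercive whenever $K$ does not annihilate constants; your Poincar\'e--mean decomposition is essentially a self-contained proof of exactly this lemma. Second, to identify the limit you pass $K$ through the strong $\Ls{1}$-convergence $u_{k'}\to u^*$, using the $\Ls{1}\to\Ls{2}$ continuity of $K$. The paper instead observes that $\TV$-boundedness also gives weak $\Ls{2}$-convergence (again via \cite[Thm.~2.5]{AcaVog94}), so that $Ku_{k'}\rightharpoonup Kv$ in $\Ls{2}$ by the $\Ls{2}\to\Ls{2}$ boundedness of $K$ alone. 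This is why the paper can remark, just after the proof, that the $\Ls{1}\to\Ls{2}$ continuity hypothesis is not actually needed for the second part; your route is perfectly valid but does consume that hypothesis.
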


\begin{proof} The first part follows immediately from $\norm{Ku}_{\Ls{2}}\leq \norm{K}\norm{u}_{\Ls{1}}$ for any $u\in\L{1}$. 

Assume now that $d(u_k,u)\ra 0$ as $k\ra\infty$ and that $K$ is injective. Then,
$K$ in particular does not annihilate constant functions and it follows from
\cite[Lemma 4.1]{AcaVog94} that $u\mapsto \norm{Ku}_{\Ls{2}} + J(u)$ is
$\TV$-coercive. Hence boundedness of $\set{\norm{Ku_k}_{\Ls{2}}}_{k\in\N}$ and
$\set{J(u_k)}_{k\in\N}$, which follows from $d(u_k,u)\ra 0$,  yields
boundedness of $\set{\norm{u_k}_{\TV}}_{k\in\N}$. Thus, there exists a subsequence  
$\set{u_{k'}}_{k'\in\N}$ which converges  to some $v\in \BV$ strongly in
$\L{1}$  and  weakly in $\L{2}$ to $v$ due to compact and bounded embedding
respectively (cf. \cite[Theorem 2.5]{AcaVog94}). These yield strong convergence
of the subsequence  in $\L{1}$  to $v$, as well as weak convergence in $\L{2}$ of $\set{Ku_{k'}}_{k'}$ to $Kv$. 
Since the weak limit is unique, it follows that  $Ku=Kv$ and consequently,
since $K$ is injective, that $u=v$.

Moreover, the entire sequence $\set{u_k}_{k\in\N}$ converges strongly in $\L{1}$
to $u$, which completes the proof.\qquad
\end{proof}
Note that the continuity of the operator $K$ from $\L{1}$ into $\L{2}$ is not necessary for proving the second part of the lemma. 

Now we are able to show the convergence rate in terms of the metric $d$:
\begin{proposition} 
  Let $\set{g_ k}_{ k\in\N} \subset \L{2}$ be such that
  $\norm{g-g_ k} \leq \delta_ k\ra 0$ as $ k\ra\infty$. Let $\Gamma$ be chosen according to rule \eqref{morozov:disc} and assume that $\lim_{
    k\ra\infty} \Gamma(\delta_ k, g_ k) = \infty$.
  If $u^\dagger$ is a $J$-minimizing
  solution of $Ku=g$ that satisfies the source condition
  \eqref{defs:sceqn}  with source element $p^\dagger\in H_2$, then the following convergence rate holds:
  \begin{equation*}\label{rate_metric}
    d(\Reg_{\Gamma(\delta_k,g_k)}(g_k),u^\dagger)=\|K\Reg_{\Gamma(\delta_k,g_k)}(g_k)-Ku^\dagger\|+|J(\Reg_{\Gamma(\delta_k,g_k)}(g_k))-J(u^\dagger)|=\bigo(\delta_k).
  \end{equation*}
\end{proposition}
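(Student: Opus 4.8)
The plan is to estimate the two summands defining $d(u_k,u^\dagger)$ separately, where for brevity I write $u_k := \Reg_{\Gamma(\delta_k, g_k)}(g_k)$ and $p_k := \Reg^*_{\Gamma(\delta_k, g_k)}(g_k)$, so that $K^*p_k \in \partial J(u_k)$ by the optimality condition of \eqref{intro:almobjective}. The residual summand is immediate: since $Ku^\dagger = g$ and Morozov's rule \eqref{morozov:disc} forces $\norm{Ku_k - g_k} < \rho\delta_k$, the triangle inequality gives $\norm{Ku_k - Ku^\dagger} = \norm{Ku_k - g} \leq \norm{Ku_k - g_k} + \norm{g_k - g} < (\rho+1)\delta_k = \bigo(\delta_k)$. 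Note that this part uses only the stopping rule, not the source condition nor the hypothesis $\Gamma(\delta_k,g_k)\to\infty$.

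The functional summand $\abs{J(u_k) - J(u^\dagger)}$ is the substantial part. Since $u^\dagger$ satisfies the source condition, Theorem \ref{morozov:goodest} applies and already yields $D_J^{\text{sym}}(u_k, u^\dagger) = \bigo(\delta_k)$. The decisive step is to convert this \emph{symmetric} estimate into two-sided control of $J(u_k) - J(u^\dagger)$. To this end I would exploit the splitting
\begin{equation*}
  D_J^{\text{sym}}(u_k, u^\dagger) = D_J^{K^*p^\dagger}(u_k, u^\dagger) + D_J^{K^*p_k}(u^\dagger, u_k),
\end{equation*}
in which both one-sided Bregman distances are non-negative; hence each of them is individually $\bigo(\delta_k)$, in particular $D_J^{K^*p^\dagger}(u_k, u^\dagger) = \bigo(\delta_k)$.

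Expanding this one-sided distance and using the source condition to rewrite the linear term gives $J(u_k) - J(u^\dagger) = D_J^{K^*p^\dagger}(u_k, u^\dagger) + \inner{p^\dagger}{Ku_k - g}$. For the upper bound I would combine $D_J^{K^*p^\dagger}(u_k, u^\dagger) = \bigo(\delta_k)$ with the Cauchy--Schwarz estimate $\abs{\inner{p^\dagger}{Ku_k - g}} \leq \norm{p^\dagger}(\rho+1)\delta_k$ from the first paragraph; for the lower bound I would use non-negativity of $D_J^{K^*p^\dagger}(u_k, u^\dagger)$ directly, obtaining $J(u_k) - J(u^\dagger) \geq \inner{p^\dagger}{Ku_k - g} \geq -\norm{p^\dagger}(\rho+1)\delta_k$. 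Together these two bounds give $\abs{J(u_k) - J(u^\dagger)} = \bigo(\delta_k)$, and adding the residual estimate completes the claim $d(u_k, u^\dagger) = \bigo(\delta_k)$.

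The main obstacle is exactly this passage from the symmetric Bregman distance---which is what the abstract machinery of Section \ref{summary} delivers---to a genuine two-sided bound on the functional values. The two essential ingredients are the non-negativity splitting of $D_J^{\text{sym}}$, which decouples the two directions, and the source condition $K^*p^\dagger \in \partial J(u^\dagger)$, which is precisely what permits the otherwise uncontrolled linear remainder $\inner{K^*p^\dagger}{u_k - u^\dagger}$ to be rewritten as $\inner{p^\dagger}{Ku_k - g}$ and thereby tied back to the residual. This also clarifies why the source condition is indispensable for a rate on the functional term, even though the residual term alone requires only Morozov's rule.
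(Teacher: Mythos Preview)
Your proof is correct and follows essentially the same strategy as the paper: bound the residual via Morozov's rule, then control $\abs{J(u_k)-J(u^\dagger)}$ by combining the $\bigo(\delta_k)$ estimate on $D_J^{\text{sym}}(u_k,u^\dagger)$ from Theorem~\ref{morozov:goodest} with the source condition to absorb the linear remainder into the residual.

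There is one small but noteworthy difference. For the direction $J(u^\dagger)-J(u_k)$, the paper uses the subgradient $K^*p_k\in\partial J(u_k)$, obtaining
\[
J(u^\dagger)-J(u_k)\ \le\ \inner{p_k}{g-Ku_k}+D_J^{\text{sym}}(u_k,u^\dagger),
\]
and then needs the boundedness of $\norm{p_k}$ (which is available from the estimates behind Corollary~\ref{summary:ratesestcor} together with Lemma~\ref{morozov:pararulesc}). You instead handle \emph{both} directions with the fixed subgradient $K^*p^\dagger$: the lower bound $J(u_k)-J(u^\dagger)\ge\inner{p^\dagger}{Ku_k-g}$ is just convexity, and the upper bound comes from $D_J^{K^*p^\dagger}(u_k,u^\dagger)\le D_J^{\text{sym}}(u_k,u^\dagger)=\bigo(\delta_k)$. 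This avoids invoking the boundedness of the dual iterates altogether and is therefore marginally more self-contained; it also makes transparent, as you note, that the hypothesis $\Gamma(\delta_k,g_k)\to\infty$ is not actually used in the argument.
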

\begin{proof}
  From the definition of rule \eqref{morozov:disc} it follows that
  $\|K\Reg_{\Gamma(\delta_k,g_k)}g_k-Ku^\dagger\|=\bigo(\delta_k)$.
  
  In order to establish an error estimate for
  $|J(\Reg_{\Gamma(\delta_k,g_k)}(g_k))-J(u^\dagger)|$, we use 
  Theorem~\ref{morozov:goodest}.
  Indeed, since the symmetric Bregman distance is larger than the Bregman
  distance, one has
  \begin{multline*}
    J(u^\dagger)-J(\Reg_{\Gamma(\delta_k,g_k)}(g_k))\leq \\
    \inner{\Reg^*_{\Gamma(\delta_k,g_k)}(g_k)}{g-K\Reg_{\Gamma(\delta_k,g_k)}(g_k)}
    + D^{\text{sym}}_J(\Reg_{\Gamma(\delta_k,g_k)}(g_k), u^\dagger).
  \end{multline*}
  Using the Cauchy-Schwarz inequality and again
  Corollary~\ref{summary:ratesestcor} we see that
  \[
  J(u^\dagger)-J(\Reg_{\Gamma(\delta_k,g_k)}(g_k)) = \bigo(\delta_k).
  \]
  Similarly one can show
  \[
  J(\Reg_{\Gamma(\delta_k,g_k)}(g_k))-J(u^\dagger) = \bigo(\delta_k)
  \]
  which ends the proof.\qquad
\end{proof}

%%%%%%%%%%%%%%%%%%%%%%%%%%%%%%%%%%%%%%%%%%%%%%%%%%%%%%%%%%%%%%%%%%%%%%%%%%%%%%%
\section{Sparse  regularization}\label{sec:sparse-regul}

In the case of sparse regularization,  the convex
functional~\eqref{intro:sparse}  is considered with $1\leq q\leq
2$ (see \cite{DauDefDeM04}). The aim of the functional $J$ is to promote sparse solutions,
i.e.~solutions which  have only a few (especially a finite number of) nonzero entries.
Tikhonov
regularization based on this regularization functional  has been studied in great detail
in~\cite{Lor08,GraHalSch08,LorSchTre10}. The case $q=1$ for the
stationary augmented Lagrangian method has been treated in~\cite{BurResHe07}
also under the name Bregman iteration. There, the authors
obtained convergence of the method for noisefree data for the Bregman
distance and considered an a priori stopping rule for noisy data. In
this section we also treat the case $q=1$ and derive both an enhanced
convergence rate for noisefree data in norm and also optimal
convergence rates for noisy data with the a posteriori rule given by
Morozov's discrepancy principle.

\subsection{Convergence rates for $\delta\to 0$}
\label{sec:conv-rates-delt}

We start with a result on convergence in the noisy data case  which holds for all $q\in[1,2]$. Fulfillment of a 
source condition is not needed here.
\begin{theorem}
  Let $K:\ell^2\to H_2$ be linear and bounded, $1\leq q\leq 2$ and let $J$ be defined by~\eqref{intro:sparse}. 
Moreover, let the parameter choice $\Gamma$
obey~\eqref{summary:abbrev}.  Then the sequence
$\{\Reg_{\Gamma(\delta_k,g_k)}(g_k)\}$ has a subsequence which converges
strongly to a $J$-minimizing solution of $Ku=g$.
\end{theorem}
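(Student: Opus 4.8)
The plan is to invoke the general convergence result Theorem~\ref{summary:conv} to obtain weak subsequential convergence together with convergence of the $J$-values, and then to upgrade weak convergence in $\ell^2$ to strong convergence by exploiting the special structure of the sparsity functional~\eqref{intro:sparse}. Write $u_k := \Reg_{\Gamma(\delta_k,g_k)}(g_k)$. Since $\Gamma$ obeys~\eqref{summary:abbrev}, Theorem~\ref{summary:conv} guarantees that $\set{u_k}$ is bounded in $\ell^2$, that each of its weak cluster points is a $J$-minimizing solution of $Ku=g$, and that $J(u_k)\to J(u^\dagger)$, where $u^\dagger$ denotes any $J$-minimizing solution (the value $J(u^\dagger)$ being independent of the chosen minimizer, since all such solutions attain the same minimal $J$-value). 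Using boundedness I would extract a subsequence, still denoted $\set{u_k}$, converging weakly in $\ell^2$ to such a minimizer $u^\dagger$; it then remains only to show that this convergence is in fact strong.

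The key observation is that weak convergence in $\ell^2$ forces componentwise convergence: for every fixed index $j$ one has $(u_k)_j=\inner{u_k}{e_j}\to\inner{u^\dagger}{e_j}=(u^\dagger)_j$, where $e_j$ is the $j$-th standard basis vector. Combined with $J(u_k)=\sum_j\abs{(u_k)_j}^q\to\sum_j\abs{(u^\dagger)_j}^q=J(u^\dagger)<\infty$, I would deduce $\norm{u_k-u^\dagger}_{\ell^q}\to0$ by a Scheff\'e/Brezis--Lieb type argument: given $\eps>0$, pick $N$ with $\sum_{j>N}\abs{(u^\dagger)_j}^q<\eps$; componentwise convergence kills the finite head $\sum_{j\le N}\abs{(u_k)_j-(u^\dagger)_j}^q$, while the convergence of the total mass $\sum_j\abs{(u_k)_j}^q$ together with convergence on the head forces the tail $\sum_{j>N}\abs{(u_k)_j}^q$ below roughly $\eps$ for large $k$; Minkowski's inequality (valid since $q\ge1$) then controls $\sum_{j>N}\abs{(u_k)_j-(u^\dagger)_j}^q$ by the two tails. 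Finally, since $1\le q\le2$ the continuous embedding $\ell^q\hookrightarrow\ell^2$ satisfies $\norm{\cdot}_{\ell^2}\le\norm{\cdot}_{\ell^q}$, so $\norm{u_k-u^\dagger}_{\ell^2}\le\norm{u_k-u^\dagger}_{\ell^q}\to0$, which is exactly the asserted strong convergence.

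I expect the main obstacle to be the passage from componentwise plus $J$-value convergence to $\ell^q$-norm convergence, i.e.\ making the tail estimate rigorous and uniform in $k$; this is precisely where the convergence of the full $\ell^q$-mass (equivalently $J(u_k)\to J(u^\dagger)$) is indispensable, as componentwise convergence alone is far too weak to rule out mass escaping to infinity. As a sanity check, the borderline case $q=2$ is much easier: there $J=\norm{\cdot}_{\ell^2}^2$, so $J(u_k)\to J(u^\dagger)$ means $\norm{u_k}\to\norm{u^\dagger}$, and strong convergence follows immediately from the Radon--Riesz property of the Hilbert space $\ell^2$ without any recourse to the componentwise argument.
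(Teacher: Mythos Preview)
Your proposal is correct and follows essentially the same approach as the paper: invoke Theorem~\ref{summary:conv} to obtain weak subsequential convergence in $\ell^2$ together with $J(u_k)\to J(u^\dagger)$, then upgrade to strong convergence. The only difference is that the paper outsources the upgrade step to \cite[Lemma~4.3]{DauDefDeM04}, whereas you spell out its proof via the componentwise-plus-tail argument; your direct argument is sound and in fact reproduces that lemma.
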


\begin{proof}
  By Theorem~\ref{summary:conv}, the sequence
  $\{\Reg_{\Gamma(\delta_k,g_k)}(g_k)\}$ is bounded in $\ell^2$ and
  hence, has a subsequence which converges weakly in
  $\ell^2$. Moreover, it follows from Theorem~\ref{summary:conv} that
  $J(\Reg_{\Gamma(\delta_k,g_k)}(g_k)) \to
  J(u^\dagger)$. By~\cite[Lemma 4.3]{DauDefDeM04} this shows that
  $J(\Reg_{\Gamma(\delta_k,g_k)}(g_k))$ also converges strongly.\qquad
\end{proof}
Note that the entire sequence of iterates converges strongly to the unique $J$-minimizing solution of $Ku=g$ in the case $q\in(1,2]$.

By Theorem~\ref{morozov:convergence}, we also conclude that
$\ell^q$-regularization combined with Morozov's discrepancy principle
gives rise to a (subsequentially) convergent regularization method and,  if additionally the source condition is fulfilled, leads to  convergence rates in the sense of Bregman distances.

Actually, in the latter case, we can strengthen
the above result. More precisely, we can derive convergence rates with respect to the $\ell^q$ norm for $q\in[1,2]$. The two cases $q\in(1,2]$ and $q=1$ have to be treated separately.

In the case $q\in(1,2]$, we take advantage of the differentiability and the high degree of convexity of the functional $J$  to  estimate even the distance between the subgradients appearing in the iterative process.

 The Fenchel conjugate of $J$ is  $J^*(\xi)=\frac{1}{r} \norm{\xi}_{\ell^r}^r$ with $r=q\slash{(q-1)}>2$ (see, e.g., \cite[Proposition 4.2, p. 19]{EkeTem76}.

The following result, which will be useful in the sequel, was pointed out in \cite[Proposition 3.2]{RamRes10}. We give here the proof for the sake of completeness.

\begin{lemma}
If $q\in(1,2]$ and $J$ according to~\eqref{intro:sparse},
then one has for all $v\in\ell^q$ and $u\in\ell^{2(q-1)}$, $u\neq 0$,
\begin{equation}\label{bd_norm}
 D_J(v,u)\geq c_q\norm{v-u}_{\ell^q}^2,
\end{equation}
for $\norm{v-u}_{\ell^q}$ small enough, where $c_q=c_q(u)$ is a positive number.
\end{lemma}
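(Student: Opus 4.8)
The plan is to establish the lower bound \eqref{bd_norm} by exploiting the fact that for $q\in(1,2]$ the functional $J$ is, away from the origin in the relevant coordinates, locally uniformly convex with a quadratic modulus. Concretely, I would recall that $D_J(v,u)=J(v)-J(u)-\inner{\xi}{v-u}$ with $\xi\in\partial J(u)$, and that since $J(u)=\sum_k\abs{u_k}^q$ decouples over coordinates, the Bregman distance also decouples:
\begin{equation*}
  D_J(v,u)=\sum_{k\in\N}\Bigl(\abs{v_k}^q-\abs{u_k}^q-q\,\sign(u_k)\abs{u_k}^{q-1}(v_k-u_k)\Bigr).
\end{equation*}
Each summand is the Bregman distance of the scalar convex function $\phi(t)=\abs{t}^q$ and is nonnegative. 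The strategy is to produce a coordinatewise quadratic lower bound and then sum.

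First I would treat the scalar function $\phi(t)=\abs{t}^q$. For $q\in(1,2]$ one has $\phi\in C^2$ away from $0$ with $\phi''(t)=q(q-1)\abs{t}^{q-2}$, which blows up as $t\to 0$ when $q<2$; this is the source of the coordinatewise convexity but also the main technical nuisance, since the modulus degenerates (for $q<2$) as $\abs{v_k}$ grows and is unbounded near the axes. By Taylor's theorem with integral remainder, the scalar Bregman distance equals $\int_0^1(1-s)\phi''(u_k+s(v_k-u_k))\,ds\,(v_k-u_k)^2$, so I would seek a uniform-in-$k$ lower bound on the averaged second derivative. The assumption $u\in\ell^{2(q-1)}$, $u\neq 0$, is exactly what controls the weights $\abs{u_k}^{q-2}$ appearing here: note $2(q-1)=q\cdot\frac{2(q-1)}{q}$ and that the exponent $2(q-1)$ is precisely the one that makes the sequence $\{\abs{u_k}^{q-2}\}$ interact correctly with an $\ell^q$-type norm of $v-u$ via Hölder's inequality.

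The key steps in order would be: \textbf{(1)} reduce to the coordinatewise estimate and discard the (nonnegative) contributions from coordinates where $u_k=0$, or bound them separately using $\abs{v_k}^q\geq$ const$\cdot\abs{v_k}^2$ only when $\abs{v_k}$ is small; \textbf{(2)} on coordinates with $u_k\neq 0$, use the integral-remainder form to get a bound of the shape $c\sum_k w_k(v_k-u_k)^2$ with weights $w_k$ comparable to $\abs{u_k}^{q-2}$ (for $\abs{v_k-u_k}$ small relative to $\abs{u_k}$); \textbf{(3)} convert the weighted $\ell^2$ sum into $\norm{v-u}_{\ell^q}^2$ by a reverse Hölder / interpolation argument, writing
\begin{equation*}
  \norm{v-u}_{\ell^q}^2=\Bigl(\sum_k\abs{v_k-u_k}^q\Bigr)^{2/q}
  =\Bigl(\sum_k w_k^{q/2}(v_k-u_k)^q w_k^{-q/2}\Bigr)^{2/q},
\end{equation*}
and apply Hölder with conjugate exponents $2/q$ and $2/(2-q)$ so that the weight-reciprocal factor collects to $\bigl(\sum_k w_k^{-q/(2-q)}\bigr)^{(2-q)/q}$, which is a constant $c_q(u)$ finite exactly because $w_k^{-q/(2-q)}\sim\abs{u_k}^{(2-q)q/(2-q)}=\abs{u_k}^{q}$ wait---this must be arranged to match $u\in\ell^{2(q-1)}$, so I would track the exponents carefully to confirm finiteness under the stated hypothesis.

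The main obstacle I anticipate is twofold: the degeneracy of $\phi''$ and the honest verification that the summability hypothesis $u\in\ell^{2(q-1)}$ is exactly what guarantees the constant $c_q(u)=c_q(\sum_k\abs{u_k}^{(2-q)\cdot\text{(exponent)}})^{\ldots}$ is finite and positive. Because the second-derivative weights degenerate for large $\abs{t}$ (when $q<2$), the inequality can only be local, which is why the statement restricts to $\norm{v-u}_{\ell^q}$ small enough; this smallness lets me replace $\phi''$ along the segment by a fixed fraction of $\phi''(u_k)$ uniformly in $k$. I would make this localization precise by choosing a radius comparable to the smallest nonzero $\abs{u_k}$ or, more robustly for infinitely many nonzero coordinates, by splitting into finitely many ``large-$u_k$'' coordinates (handled by the local $C^2$ estimate) and a tail (handled directly by convexity of $\phi$ together with the $\ell^{2(q-1)}$ summability). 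Assembling these with Hölder as above yields \eqref{bd_norm}.
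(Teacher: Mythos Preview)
The paper takes a much shorter and cleaner route than your coordinatewise strategy. It invokes a known norm-level inequality (Butnariu--Iusem, Lemma 1.4.8) which gives
\[
D_J(v,u)\ \geq\ (t+\norm{u})^q-\norm{u}^q-qt\,\norm{u}^{q-1},\qquad t:=\norm{v-u}_{\ell^q},
\]
thereby reducing the problem in one stroke to a \emph{single} real-variable Taylor expansion of $\varphi(t)=(t+\norm{u})^q$ about $t=0$. The quadratic lower bound with constant $c_q=\tfrac{bq(q-1)}{2}\norm{u}^{q-2}$ and the smallness threshold $t<\tfrac{3(1-b)\norm{u}}{2-q}$ then drop out immediately. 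In particular, the hypothesis $u\in\ell^{2(q-1)}$ plays no analytic role in the argument beyond ensuring $\partial J(u)\neq\emptyset$ so that $D_J(v,u)$ is defined.

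Your coordinatewise approach is a different route and in principle could be made to work, but as written it has a real gap. The localization step---replacing $\phi''$ along the segment by a fixed fraction of $\phi''(u_k)=q(q-1)\abs{u_k}^{q-2}$---requires something like $\abs{v_k-u_k}\leq c\abs{u_k}$, and this cannot be enforced uniformly in $k$ from the single global condition $\norm{v-u}_{\ell^q}$ small when $u$ has infinitely many nonzero coordinates tending to zero. Your proposed fix (finitely many ``large'' coordinates plus a tail) is only gestured at; the tail is exactly where the difficulty lives, and you have not shown how convexity of $\phi$ plus $\ell^{2(q-1)}$-summability closes it. Separately, your reverse H\"older computation actually yields the requirement $\sum_k\abs{u_k}^q<\infty$, not $\sum_k\abs{u_k}^{2(q-1)}<\infty$; since $2(q-1)<q$ for $q<2$ the latter hypothesis is the stronger one, so there is no logical problem, but your claim that $u\in\ell^{2(q-1)}$ ``is exactly what controls the weights'' misidentifies the role of that hypothesis. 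The paper's norm-level reduction sidesteps all of these issues entirely.
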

\begin{proof}
The inequality is obvious if $q=2$. Let $q\in(1,2)$. Note that $D(\partial J)=\ell^{2(q-1)}$. In order to simplify the notation in the proof, we omit the subscript for the $\ell^q$ norm. 
Now \cite[Lemma 1.4.8]{ButIus00} implies that for all $v\in\ell^q$, $u\in\ell^{2(q-1)}$
\begin{equation}\label{temp}
 D_J(v,u)\geq(t+\|u\|)^q-\|u\|^q-qt\|u\|^{q-1},
\end{equation}
where $t:=\norm{v-u}$. Let $\varphi(t):=\left(t+\|u\|\right)^q$ for $t$ small enough. The Taylor expansion of $\varphi$ around $0$ yields existence of an $a_t\in(0,t)$ such that
\[
\varphi(t)=\|u\|^q+qt\|u\|^{q-1}+\frac{q(q-1)t^2}{2}\|u\|^{q-2}+\frac{q(q-1)(q-2)t^3}{6}\left(a_t+\|u\|\right)^{q-3}.
\] 
 This inequality and \eqref{temp} imply 
\begin{eqnarray*}
  D_J(v,u)&\geq &\varphi(t)-\|u\|^q-qt\|u\|^{q-1}\\
&=& \frac{q(q-1)t^2}{2}\|u\|^{q-2}+\frac{q(q-1)(q-2)t^3}{6}\left(a_t+\|u\|\right)^{q-3}\\
&=& \frac{q(q-1)t^2}{2}\|u\|^{q-2}\left[1-\frac{(2-q)t}{3}\|u\|^{2-q}\left(a_t+\|u\|^{q-3}\right)\right].
\end{eqnarray*}
Note that $a_t+\|u\|\geq\|u\|$ and $q-3<0$. Hence, $(a_t+\|u\|)^{q-3}\leq\|u\|^{q-3}$ and
\begin{equation}\label{temp1}
 D_J(v,u)\geq  \frac{q(q-1)t^2}{2}\|u\|^{q-2}\left[1-\frac{(2-q)t}{3}\|u\|^{-1}\right].
 \end{equation}
Let $b\in (0,1)$ and take $t<\frac{3(1-b)\|u\|}{2-q}$. Then inequality \eqref{temp1} yields 
\[
D_J(v,u)\geq  c_qt^2,
\]
with $c_q=\frac{bq(q-1)}{2}\|u\|^{q-2}$.\qquad
\end{proof}

\begin{proposition}  Let $K:\ell^2\to H_2$ be linear and bounded,  $J$ be defined
by~\eqref{intro:sparse} with $1< q\leq 2$. Let
  $\Gamma$ be the parameter choice according to Morozov's discrepancy
  principle~\eqref{morozov:disc}. If the $J$-minimizing solution $u^\dagger$ of
  $Ku=g$ satisfies the source condition \eqref{defs:sceqn} with a source element
  $p^\dagger$, then the following convergence rates hold for $k$ sufficiently
  large:
\begin{equation*}\label{rate_q}
\norm{\Reg_{\Gamma(\delta_k,g_k)}(g_k)-u^\dagger}_{\ell^q}=\bigo(\sqrt{\delta_k}),
\end{equation*}
\begin{equation*}\label{rate_r}
\norm{K^* \Reg^*_{\Gamma(\delta_k,
  g_k)}(g_k)-K^* p^\dagger}_{\ell^r}=\bigo(\delta_k^{\frac{q-1}{q}}).
\end{equation*}
\end{proposition}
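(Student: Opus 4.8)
The plan is to derive the two rates from the machinery already assembled, using the specific structure of the $\ell^q$ setting. For the primal rate, I would start from Theorem~\ref{morozov:goodest}, which under the source condition and Morozov's rule yields $D_J^{\text{sym}}(\Reg_{\Gamma(\delta_k,g_k)}(g_k),u^\dagger)=\bigo(\delta_k)$. Since the symmetric Bregman distance dominates the one-sided Bregman distance $D_J(\Reg_{\Gamma(\delta_k,g_k)}(g_k),u^\dagger)$, I can combine this with the just-proved Lemma~\eqref{bd_norm}, which for $q\in(1,2]$ gives $D_J(v,u^\dagger)\geq c_q\norm{v-u^\dagger}_{\ell^q}^2$ once $\norm{v-u^\dagger}_{\ell^q}$ is small. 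The logical order matters here: I first need to know the iterates are already $\ell^q$-close to $u^\dagger$ so that the lemma's smallness hypothesis is in force. That $\ell^q$-convergence should follow from Theorem~\ref{summary:conv} (convergence of $J$-values plus weak convergence), in the manner of the strong-convergence argument via \cite[Lemma 4.3]{DauDefDeM04} used just above. Once smallness is secured, I would write
\begin{equation*}
  c_q\norm{\Reg_{\Gamma(\delta_k,g_k)}(g_k)-u^\dagger}_{\ell^q}^2 \leq D_J(\Reg_{\Gamma(\delta_k,g_k)}(g_k),u^\dagger)\leq D_J^{\text{sym}}(\Reg_{\Gamma(\delta_k,g_k)}(g_k),u^\dagger)=\bigo(\delta_k),
\end{equation*}
so that taking square roots yields $\norm{\Reg_{\Gamma(\delta_k,g_k)}(g_k)-u^\dagger}_{\ell^q}=\bigo(\sqrt{\delta_k})$.

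For the dual rate, the natural object is the Fenchel conjugate $J^*(\xi)=\tfrac1r\norm{\xi}_{\ell^r}^r$ with $r=q/(q-1)>2$, which is now smooth and \emph{uniformly convex of power type $r$} on bounded sets. The idea is to run the same argument as for the primal variable, but on the dual side: by Remark~\ref{summary:remonrates}~ii) the symmetric Bregman distance of $J^*$ between $K^*p_n^\delta$ and $K^*p^\dagger$ equals $D_J^{\text{sym}}(u_n^\delta,u^\dagger)$, hence is also $\bigo(\delta_k)$. I would then invoke a lower bound for $D_{J^*}$ of the form $D_{J^*}(\xi,\eta)\geq c\,\norm{\xi-\eta}_{\ell^r}^r$ valid for $r\geq 2$ on bounded sets (this is the $r$-analogue of inequality~\eqref{bd_norm}, but now an everywhere-valid power-$r$ estimate since the exponent exceeds $2$). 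Combining $c\,\norm{K^*p_n^\delta-K^*p^\dagger}_{\ell^r}^r\leq D_{J^*}^{\text{sym}}(K^*p_n^\delta,K^*p^\dagger)=\bigo(\delta_k)$ and taking $r$-th roots gives $\norm{K^*p_n^\delta-K^*p^\dagger}_{\ell^r}=\bigo(\delta_k^{1/r})=\bigo(\delta_k^{(q-1)/q})$, which is exactly the claimed exponent.

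I expect the main obstacle to be supplying the correct \emph{lower} bound for $D_{J^*}$ in the $\ell^r$ norm with $r>2$. Unlike the primal lemma, where the exponent $q\in(1,2]$ forces the quadratic-type bound~\eqref{bd_norm}, the dual functional with $r>2$ is only uniformly convex of the weaker power type $r$, so the natural estimate is $\norm{\xi-\eta}_{\ell^r}^r$ rather than $\norm{\xi-\eta}_{\ell^r}^2$; getting the power bookkeeping right is what produces the exponent $(q-1)/q$ and must be done carefully on a bounded set containing the relevant iterates. The boundedness needed to localize both power-type inequalities is not automatic and has to be extracted beforehand: the $\ell^q$-boundedness of $\set{u_n^\delta}$ follows from Theorem~\ref{summary:conv}, while $\ell^r$-boundedness of $\set{K^*p_n^\delta}$ follows from the $\bigo(1)$ bound on $\norm{\Reg^*_{\Gamma}(g_k)}$ in Theorem~\ref{summary:ratesest}~(ii) together with boundedness of $K^*$. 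A secondary subtlety is that Lemma~\eqref{bd_norm} requires $u^\dagger\neq 0$ and $u^\dagger\in\ell^{2(q-1)}$; the latter is guaranteed because $u^\dagger\in D(\partial J)=\ell^{2(q-1)}$ under the source condition, and the former is a harmless genericity assumption that I would note explicitly.
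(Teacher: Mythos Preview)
Your proposal is correct and follows essentially the same approach as the paper: apply the lower bound~\eqref{bd_norm} together with Theorem~\ref{morozov:goodest} for the primal rate, and the analogous power-$r$ lower bound on $D_{J^*}$ combined with Remark~\ref{summary:remonrates} for the dual rate. One small point: the paper cites \cite[Lemma~1.4.10]{ButIus00} for $D_{J^*}(\xi_2,\xi_1)\geq c_r\norm{\xi_2-\xi_1}_{\ell^r}^r$ as a global inequality valid for all $\xi_1,\xi_2\in\ell^r$ when $r\geq 2$, so your concern about restricting to bounded sets is unnecessary here---the power-$r$ uniform convexity holds without localization, which simplifies the dual argument.
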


\begin{proof} We apply inequality \eqref{bd_norm} for
$v=\Reg_{\Gamma(\delta_k,g_k)}(g_k)$ and $u=u^\dagger$ and obtain \[
D_J^{K^*p^\dagger}(\Reg_{\Gamma(\delta_k,g_k)}(g_k),u^\dagger)\geq
c_q\norm{\Reg_{\Gamma(\delta_k,g_k)}(g_k)-u^\dagger}_{\ell^q}^2, \] for $k$ sufficiently
large. This and Theorem \ref{morozov:goodest} imply the first assertion. 

In order to show the estimate for the subgradients, note that (see, e,g., 
\cite[Lemma 1.4.10]{ButIus00}) \[ D_{J^*}(\xi_2,\xi_1)\geq c_r\norm{\xi_2-\xi_1}_{\ell^r}^r \] for any
$\xi_1,\xi_2\in\ell^r$for some positive constant $c_r$ depending on $r\geq 2$.
 Consequently, it follows from Remark \ref{summary:remonrates} that  \[ \norm{K^* \Reg^*_{\Gamma(\delta_k,
  g_k)}(g_k)-K^* p^\dagger}_{\ell^r}=\bigo(\delta_k^{\frac{q-1}{q}})
\] and thus completes the proof.\qquad
\end{proof}

Now we turn to the case of sparse regularization for $q=1$. Here, 
one can derive improved convergence rates in case the solution $u^\dagger$ does
not only fulfill the source condition but also is indeed sparse. To be more
precise, we define for a given set $I\subset \N$ the projection $P_I$
by
\[
(P_I u)_k =
\begin{cases}
u_k,& k\in I\\
0, &  k\notin I.
\end{cases}
\]
and require the following

\begin{ass}\label{sparse:standing_assumption}
\begin{enumerate}[i)]
  \item The solutions $u^\dagger$ of \eqref{intro:constrained} satisfy the source
  condition~\eqref{defs:sceqn} with source element $p^\dagger$.\label{enu:SC}
  \item For $K^*p^\dagger = \xi$ and
  $I = \{k\ |\ \abs{\xi_k}=1\}$, one has that the
  quantity $\theta = \sup\{\abs{\xi_k}\ |\ k\notin I\}$ is strictly smaller than
  one. \label{enu:strict_sparsity_pattern} \item The operator $KP_I:\ell^1\to H$
  is injective in the sense that $P_Iu\neq P_Iv$ implies $KP_I u \neq KP_I
  v$.\label{enu:FBI}
\end{enumerate}
\end{ass}

We start with the following lemma which can
be traced back to~\cite{GraHalSch08} (see also \cite{GraHalSch10}). 

\begin{lemma}\label{lem:estimate_J_below}
Assume that Assumption \ref{sparse:standing_assumption} is satisfied. Then,
there exist constants $\beta_1,\beta_2>0$ such that
\[
J(u)-J(u^\dagger) \geq \beta_1 J(u-u^\dagger) - \beta_2\norm{K(u- u^\dagger)}.
\]
\end{lemma}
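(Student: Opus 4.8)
The plan is to prove the estimate
\[
J(u) - J(u^\dagger) \geq \beta_1 J(u - u^\dagger) - \beta_2 \norm{K(u - u^\dagger)}
\]
by exploiting the source condition together with the strict sparsity pattern in Assumption \ref{sparse:standing_assumption}. First I would start from the subgradient inequality for the convex functional $J$ at $u^\dagger$. Since $\xi = K^*p^\dagger \in \partial J(u^\dagger)$, the Bregman distance is nonnegative, which gives
\[
J(u) - J(u^\dagger) \geq \inner{\xi}{u - u^\dagger} = \inner{K^*p^\dagger}{u - u^\dagger} = \inner{p^\dagger}{K(u - u^\dagger)}.
\]
The right-hand side is bounded below by $-\norm{p^\dagger}\,\norm{K(u - u^\dagger)}$, which already produces a term of the form $-\beta_2\norm{K(u-u^\dagger)}$; the real work is to extract the positive $\beta_1 J(u - u^\dagger)$ term.

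To get the $J(u-u^\dagger)$ term I would decompose the difference $w := u - u^\dagger$ using the projections $P_I$ and $P_{I^c}$ onto the index set $I = \{k : \abs{\xi_k} = 1\}$ and its complement, writing $J(w) = J(P_I w) + J(P_{I^c} w) = \norm{P_I w}_{\ell^1} + \norm{P_{I^c} w}_{\ell^1}$. The key point is that on $I^c$ the subgradient entries satisfy $\abs{\xi_k} \leq \theta < 1$, so I can exploit the gap $1 - \theta$. The plan is to combine the subgradient inequality more carefully: writing $\inner{\xi}{w} = \sum_k \xi_k w_k$, on the coordinates in $I$ one has $\abs{\xi_k} = 1$, and on $I^c$ one has $\abs{\xi_k} \leq \theta$. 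The strategy is to estimate $J(u) - J(u^\dagger)$ from below so that the contribution of $P_{I^c} w$ is controlled with a coefficient bounded away from zero, namely something like $(1 - \theta)\norm{P_{I^c}w}_{\ell^1}$, because the penalty $J$ fully charges these coordinates while $\xi$ charges them only up to $\theta$. The contribution from $P_I w$ (finitely many coordinates, since $\sum_k \abs{\xi_k}^2 < \infty$ forces $I$ finite) is then handled using the injectivity hypothesis iii), which via a finite-dimensional equivalence of norms lets me bound $\norm{P_I w}_{\ell^1}$ by $C\norm{KP_I w}$, and in turn by $\norm{Kw}$ plus a term involving $\norm{KP_{I^c}w} \leq \norm{K}\norm{P_{I^c}w}_{\ell^1}$.

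The main obstacle I anticipate is controlling the $P_I$ part: the injectivity of $KP_I$ only yields an equivalence $\norm{P_I w}_{\ell^1} \leq C\norm{KP_I w}$ on the finite-dimensional range of $P_I$, and I must convert $\norm{KP_I w}$ into $\norm{Kw}$ at the cost of the $P_{I^c}$ term, ensuring the resulting coefficient in front of $\norm{P_{I^c}w}_{\ell^1}$ stays strictly positive after subtracting whatever is absorbed from the gap $1 - \theta$. This balancing — choosing $\beta_1 > 0$ small enough that the positive multiple of $\norm{P_{I^c}w}_{\ell^1}$ coming from the spectral gap dominates the negative contribution produced when replacing $KP_Iw$ by $Kw - KP_{I^c}w$, while simultaneously lower-bounding the finite block — is where the constants $\beta_1, \beta_2$ get pinned down. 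Once both blocks are assembled, collecting terms yields the claimed inequality with explicit constants depending on $\theta$, $\norm{p^\dagger}$, $\norm{K}$, and the equivalence constant for $KP_I$ on the finite set $I$.
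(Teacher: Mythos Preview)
Your proposal is correct and follows essentially the same route as the paper: decompose via $P_I$ and $P_{I^\complement}$, use the gap $1-\theta$ to control $\norm{P_{I^\complement}w}_{\ell^1}$ by the Bregman distance $J(u)-J(u^\dagger)-\inner{\xi}{u-u^\dagger}$, and use injectivity of $KP_I$ (on the necessarily finite set $I$) to control $\norm{P_Iw}_{\ell^1}$ by $\norm{K(u-u^\dagger)}$ plus a $P_{I^\complement}$ remainder. The paper organizes the argument by bounding $J(u-u^\dagger)$ from above rather than $J(u)-J(u^\dagger)$ from below, which makes the constants fall out directly and sidesteps the ``balancing'' obstacle you anticipate; no smallness of $\beta_1$ needs to be chosen.
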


\begin{proof}
  Due to Assumption \ref{sparse:standing_assumption}~\ref{enu:FBI}) the
  operator $KP_I$ is injective and hence, there exists $c$ such that
  $\norm{KP_Iu}{}\geq c\norm{P_Iu}_{\ell^1}$ for all $u\in\ell^1$. Now we estimate
  \begin{align*}
J(u- u^\dagger) & = \norm{u-u^\dagger}_{\ell^1} = \norm{P_I(u-u^\dagger)}_{\ell^1} +
\norm{P_{I^\complement} u}_{\ell^1}\\
& \leq \tfrac1c\norm{KP_I(u- u^\dagger)} + \norm{P_{I^\complement} u}_{\ell^1}\\
& \leq \tfrac1c\norm{K(u- u^\dagger)} +
(\norm{K}+1)\norm{P_{I^\complement} u}_{\ell^1}.
\end{align*}
Since $ u^\dagger_k=0$ for $k\notin I$,  Assumption
\ref{sparse:standing_assumption}~\ref{enu:SC})
and~\ref{enu:strict_sparsity_pattern}) implies 
\begin{align*}
\norm{P_{I^\complement}u}_{\ell^1} & = \sum_{k\notin I}\abs{u_k}\\
& \leq \frac{1}{1-\theta}\Bigl(\sum_{k\notin I}\abs{u_k} - \abs{u^\dagger_k} - \xi_k(u_k- u^\dagger_k)\Bigr)\\\
& \leq  \frac{1}{1-\theta}\Bigl(\sum_{k}\abs{u_k} - \abs{ u^\dagger_k} - \xi_k(u_k- u^\dagger_k)\Bigr)\\
& = \frac{1}{1-\theta}\Bigl(J(u) - J(u^\dagger) - \inner{\xi}{u-u^\dagger}\Bigr)\\
& \leq \frac{1}{1-\theta}\Bigl(J(u) - J(u^\dagger)
+\norm{p^\dagger}\norm{K(u-u^\dagger)}\Bigr).
\end{align*}
Combining both estimates gives
\[
J(u-u^\dagger) \leq \bigl(\tfrac1c +
\norm{p^\dagger}\frac{\norm{K}+1}{1-\theta}\bigr)\norm{K(u- u^\dagger)} +
\frac{\norm{K}+1}{1-\theta}(J(u)-J(u^\dagger))
\]
which yields the assertion with
\begin{equation*}
  \beta_1  = \frac{1-\theta}{\norm{K}+1},\qquad
  \beta_2 = \frac{1-\theta}{(\norm{K}+1)c} + \norm{p^\dagger}.
\end{equation*}
\end{proof}
\begin{remark}\upshape
  We remark on Assumption \ref{sparse:standing_assumption}:
  % Assumption~\ref{enu:SC}) is the usual source
  % condition~\eqref{defs:sceqn} which is frequently used to
  % obtain error estimates or convergence rates.
  Statement~\ref{enu:strict_sparsity_pattern}) is related to the
  notion of ``strict sparsity pattern'' in~\cite{BreLor08}. To get a
  practically relevant condition, one may replace this with the
  assumption that the range of $K^*$ is contained in some $\ell^p$
  with $p<\infty$ (since in this case the sequence $\xi$ has to tend
  to zero).  This also implies that $I$ is finite. Alternatively one
  may also work with $K:\ell^2\to H$ (which implies $K^*:H\to
  \ell^2$).

  Assumption~\ref{enu:FBI}) is a restricted injectivity
  condition. Since one needs to know the set $I$ to verify this in
  advance, one often uses the ``finite basis injectivity property''
  (FBI property) from~\cite{BreLor08,Lor08} which states that $KP_I$
  is injective for all finite sets $I$. This condition can be checked
  in advance and hence, it seems more practical.
\end{remark}

% In the following we will always assume that Assumption
% \ref{sparse:standing_assumption} holds. 

Now we treat the case of noisy data and show that the application of
Morozov's discrepancy principle leads to optimal convergence rates.
\begin{theorem}
  Let  $u^\dagger$ be a $J$-minimizing
  solution of $Ku=g$ and assume that
  $\Gamma$ is the parameter choice according to Morozov's discrepancy
  principle~\eqref{morozov:disc}. Then, one has
  \[
  \norm{\Reg_{\Gamma(\delta_k,g_k)}g_k-u^\dagger}_{\ell^1} =
  \bigo(\delta_k).
  \]
\end{theorem}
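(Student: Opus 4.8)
The plan is to combine the key descent inequality from Morozov's principle with the auxiliary lower bound for $J$ provided by Lemma~\ref{lem:estimate_J_below}, which is precisely designed to convert a Bregman-distance estimate into an $\ell^1$-norm estimate under the sparsity assumptions. The central observation is that, since $u^\dagger$ satisfies the source condition (Assumption~\ref{sparse:standing_assumption}~\ref{enu:SC})), Theorem~\ref{morozov:goodest} already furnishes the rate
\begin{equation*}
  D_J^{\text{sym}}(\Reg_{\Gamma(\delta_k,g_k)}(g_k), u^\dagger) = \bigo(\delta_k),
\end{equation*}
and that by construction of the stopping rule \eqref{morozov:disc} together with $\norm{g-g_k}\leq\delta_k$ one has $\norm{K\Reg_{\Gamma(\delta_k,g_k)}(g_k) - g} = \bigo(\delta_k)$. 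These two facts supply control on the two quantities that appear on the right-hand side of Lemma~\ref{lem:estimate_J_below}.

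First I would abbreviate $u_k := \Reg_{\Gamma(\delta_k,g_k)}(g_k)$ and $\xi = K^*p^\dagger \in \partial J(u^\dagger)$, and recall that $D_J^{\text{sym}}(u_k,u^\dagger) = D_J(u_k,u^\dagger) + D_J(u^\dagger,u_k) \geq D_J^\xi(u_k,u^\dagger)$ since both Bregman distances are non-negative; thus the symmetric estimate from Theorem~\ref{morozov:goodest} gives $D_J^\xi(u_k,u^\dagger) = \bigo(\delta_k)$. Now apply Lemma~\ref{lem:estimate_J_below} with $u = u_k$, which reads $J(u_k) - J(u^\dagger) \geq \beta_1 J(u_k - u^\dagger) - \beta_2 \norm{K(u_k - u^\dagger)}$. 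Since $J(u_k - u^\dagger) = \norm{u_k - u^\dagger}_{\ell^1}$ in the case $q=1$, rearranging yields
\begin{equation*}
  \beta_1 \norm{u_k - u^\dagger}_{\ell^1} \leq \bigl(J(u_k) - J(u^\dagger)\bigr) + \beta_2 \norm{K(u_k - u^\dagger)}.
\end{equation*}

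The remaining task is to bound the right-hand side by $\bigo(\delta_k)$. The second term is immediate: $\norm{K(u_k-u^\dagger)} = \norm{Ku_k - g} = \bigo(\delta_k)$ by the Morozov residual bound above. For the first term I would use the definition of the Bregman distance, $J(u_k) - J(u^\dagger) = D_J^\xi(u_k, u^\dagger) + \inner{\xi}{u_k - u^\dagger}$, where the Bregman part is $\bigo(\delta_k)$ from the step above, and the inner product is estimated by $\inner{\xi}{u_k - u^\dagger} = \inner{K^*p^\dagger}{u_k - u^\dagger} = \inner{p^\dagger}{K(u_k - u^\dagger)} \leq \norm{p^\dagger}\,\norm{Ku_k - g} = \bigo(\delta_k)$ using Cauchy--Schwarz. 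Collecting these gives $J(u_k) - J(u^\dagger) = \bigo(\delta_k)$, and hence $\norm{u_k - u^\dagger}_{\ell^1} = \bigo(\delta_k)$ after dividing by the positive constant $\beta_1$.

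The main subtlety I anticipate is ensuring that Lemma~\ref{lem:estimate_J_below} is genuinely applicable here, namely that the full Assumption~\ref{sparse:standing_assumption} (in particular the strict sparsity pattern \ref{enu:strict_sparsity_pattern}) and the restricted injectivity \ref{enu:FBI})) is in force, rather than only the bare source condition stated in the theorem; if the intended hypotheses are the weaker ones, the estimate $J(u_k)-J(u^\dagger) = \bigo(\delta_k)$ alone does not yield an $\ell^1$ rate, so the full assumption set must be invoked. A secondary point worth noting is that the constant $\beta_1 = (1-\theta)/(\norm{K}+1)$ is positive precisely because $\theta < 1$, which is exactly what \ref{enu:strict_sparsity_pattern}) guarantees, so the final division is legitimate and the rate $\bigo(\delta_k)$ is clean and does not degenerate.
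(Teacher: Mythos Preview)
Your proof is correct and follows essentially the same route as the paper: both combine Lemma~\ref{lem:estimate_J_below} with the symmetric Bregman-distance estimate from Theorem~\ref{morozov:goodest} and the residual bound from the discrepancy principle, handling the inner-product term $\inner{K^*p^\dagger}{u_k-u^\dagger}$ via Cauchy--Schwarz. Your remark about the hypotheses is also on point: the paper's theorem statement only mentions that $u^\dagger$ is $J$-minimizing, but the proof---like yours---relies on the full Assumption~\ref{sparse:standing_assumption} through Lemma~\ref{lem:estimate_J_below}, so this is a mild imprecision in the paper rather than a gap in your argument.
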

\begin{proof}
  We estimate the symmetric distance from below using
  Lemma~\ref{lem:estimate_J_below}. To this end, set $u_k =
  \Reg_{\Gamma(\delta_k, g_k)}(g_k)$ and observe that
  \begin{align*}
    D_J^{\text{sym}}(u_k ,u^\dagger) & \geq D_J(u_k ,u^\dagger)\\ & =
    J(u_k) - J(u^\dagger) + \inner{K^*p^\dagger}{u_k-u^\dagger}\\ & \geq \beta_1
    J(u_k-u^\dagger) - \beta_2\norm{Ku_k-g} -
    \inner{p^\dagger}{Ku_k-g}.
  \end{align*}
  Rearranging and using the Cauchy-Schwartz inequality leads to
  \[
  \beta_1 J(u_k-u^\dagger) \leq
  D_J^{\text{sym}}(u_k ,u^\dagger) +
  \left(\beta_2 +
  \norm{p^\dagger}\right)\norm{Ku_k-g}.
  \]
  From the definition of Morozov's discrepancy principle~\eqref{morozov:disc}
  and Theorem~\ref{morozov:goodest} we finally conclude the proof.\qquad
\end{proof}

\subsection{Convergence rate for $n\to\infty$ in the noisefree case}
\label{sec:conv-rate-ntoinfty}

Another consequence of our analysis of the ALM is that we can prove convergence
rates of the ALM iteration with noisefree data which are superior to previous results.
\begin{proposition}\label{sparse:conv_noisefree}
Let  $J$ be according to~\eqref{intro:sparse} with $q=1$, $u^\dagger$ be a $J$-minimizing
solution of $Ku=g$ and $p_0=0$. Then there exists a constant $C>0$
such that the iterates $u_n$ of the ALM fulfill
\[
\norm{u_n-u^\dagger}_{\ell^1} \leq \frac{C}{t_n}.
\]
\end{proposition}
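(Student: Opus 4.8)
The plan is to reduce everything to the noisefree specialization ($\delta=0$, $g^\delta=g$) of the a~priori estimates already collected in Section~\ref{summary}, and then to convert a rate for the symmetric Bregman distance into a rate for the $\ell^1$-norm by means of Lemma~\ref{lem:estimate_J_below}. Throughout I work under Assumption~\ref{sparse:standing_assumption}, which is in force in this section; in particular \ref{enu:SC}) supplies a source element $p^\dagger$ with $K^*p^\dagger\in\partial J(u^\dagger)$, and I use the hypothesis $p_0=0$.

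First I would extract the residual rate. Putting $\delta=0$ in Theorem~\ref{summary:ratesest2} and discarding the two nonnegative terms $D^{u^\dagger}_{J^*}(K^*p_n,K^*p^\dagger)$ and $\tfrac{\gamma-1}{2\gamma t_n}\norm{p_n-p^\dagger}^2$ (legitimate for any fixed $\gamma>1$) leaves $\tfrac{t_n}{4}\norm{Ku_n-g}^2\leq\tfrac{\norm{p^\dagger}^2}{2t_n}$, whence $\norm{Ku_n-g}\leq\sqrt{2}\,\norm{p^\dagger}/t_n=\bigo(t_n^{-1})$. Feeding $\delta=0$ and $p_0=0$ into Corollary~\ref{summary:ratesestcor}~ii) then gives $D^{\text{sym}}_J(u_n,u^\dagger)\leq\norm{Ku_n-g}\,\norm{p^\dagger}=\bigo(t_n^{-1})$.

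Second I would upgrade this symmetric Bregman rate to a rate for the bare functional gap. Expanding the one-sided distance at the subgradient $K^*p^\dagger$ and using $Ku^\dagger=g$ yields $J(u_n)-J(u^\dagger)=D^{K^*p^\dagger}_J(u_n,u^\dagger)+\inner{p^\dagger}{Ku_n-g}$. The first summand is dominated by $D^{\text{sym}}_J(u_n,u^\dagger)=\bigo(t_n^{-1})$, and the second is controlled by Cauchy--Schwarz and the residual rate, $\abs{\inner{p^\dagger}{Ku_n-g}}\leq\norm{p^\dagger}\,\norm{Ku_n-g}=\bigo(t_n^{-1})$, so that $J(u_n)-J(u^\dagger)=\bigo(t_n^{-1})$.

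Finally I would invoke Lemma~\ref{lem:estimate_J_below}, which, since $J(u_n-u^\dagger)=\norm{u_n-u^\dagger}_{\ell^1}$ for $q=1$, reads $\beta_1\norm{u_n-u^\dagger}_{\ell^1}\leq\bigl(J(u_n)-J(u^\dagger)\bigr)+\beta_2\norm{Ku_n-g}$; both terms on the right are $\bigo(t_n^{-1})$, and dividing by $\beta_1>0$ delivers the asserted bound $\norm{u_n-u^\dagger}_{\ell^1}\leq C/t_n$ for a suitable $C$. The only genuinely delicate point is the second step: the estimates of Section~\ref{summary} natively bound $D^{\text{sym}}_J(u_n,u^\dagger)$ rather than the plain difference $J(u_n)-J(u^\dagger)$ that Lemma~\ref{lem:estimate_J_below} consumes, and the discrepancy between them is precisely $\inner{p^\dagger}{Ku_n-g}$. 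This is why the sharp $\bigo(t_n^{-1})$ residual rate (available here because the source condition holds and $\delta=0$), rather than the generic $\bigo(t_n^{-1/2})$ of Theorem~\ref{summary:conv}, is indispensable.
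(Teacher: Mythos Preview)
Your proof is correct and follows essentially the same route as the paper: reduce to the noisefree estimates of Theorem~\ref{summary:ratesest2}, then feed the resulting $\bigo(t_n^{-1})$ bounds into Lemma~\ref{lem:estimate_J_below}. The only tactical difference is that the paper bounds $J(u_n)-J(u^\dagger)$ directly via the subgradient inequality at $u_n$, obtaining $J(u_n)-J(u^\dagger)\leq\norm{p_n}\,\norm{Ku_n-g}$ and using boundedness of $\norm{p_n}$, whereas you detour through Corollary~\ref{summary:ratesestcor}~ii) and the symmetric Bregman distance; both arrive at the same $\bigo(t_n^{-1})$ rate for $J(u_n)-J(u^\dagger)$.
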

\begin{proof}
  Since $K^*p_n\in\partial J(u_n)$, one has  
  \begin{align*}
J(u_n) - J(u^\dagger) & \leq -\inner{K^*p_n}{u^\dagger - u_n}\\
& = -\inner{p_n}{g - Ku_n}\\
& \leq \norm{p_n}\norm{g-Ku_n}
\end{align*}
Now we use Lemma~\ref{lem:estimate_J_below} to obtain
\begin{align*}
\beta_1 J(u_n-u^\dagger) & \leq J(u_n) - J(u^\dagger) + \beta_2\norm{Ku_n-g}\\
& \leq (\norm{p_n} + \beta_2)\norm{Ku_n-g}.
\end{align*}
Theorem~\ref{summary:ratesest2} (with $\delta = 0$) gives
\[
J(u_n - u^\dagger) \leq \frac{(\gamma + \beta_2)\norm{p^\dagger}}{\beta_1t_n}
\]
which proves the assertion.\qquad
\end{proof}

This proposition shows that the ALM can calculate approximate
solutions to the so-called Basis Pursuit
problem~\cite{chen1998basispursuit} of finding minimal $\ell^1$-norm solutions of underdetermined linear systems
and also gives an estimate on the speed of convergence of the objective
value.

\subsection{Implications for Compressed Sensing}
\label{sec:impl-compr-sens}

Finally we remark on the relation of our results to the theory of compressed sensing:
Linear convergence rates for the variational regularization with
$\ell^1$-norm has been shown in~\cite{GraHalSch08,GraHalSch10} under a source
condition and some assumptions on the operator $K$. A similar result
has been proven (see \cite{CanTao07}) in the finite dimensional setting
of compressed sensing, by using the restricted isometry property
condition. In the latter setting, \cite{GraHalSch10} established
the following connection between the above mentioned conditions - see
\cite[part of Proposition 5.3 and Theorem 4.7]{GraHalSch10} :

\begin{proposition}
Assume that $Ku^\dagger=g$. Assume that $K$ satisfies the $s$-restricted isometry property and let $u^\dagger$ be an $s$-sparse solution of the equation. Then $u^\dagger$ satisfies the source condition and $KP_{I}$ is injective, with  $I$ given by Lemma \ref{lem:estimate_J_below}.
\end{proposition}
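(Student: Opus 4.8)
The plan is to produce an explicit dual certificate $p^\dagger$ whose image $\xi = K^*p^\dagger$ realizes the sign pattern of $u^\dagger$ on its support and stays strictly below one in modulus off the support. In the finite-dimensional compressed-sensing setting the $s$-RIP is exactly what makes this construction possible and, simultaneously, what forces the injectivity of $KP_I$. First I would recall that for $J = \norm{\cdot}_{\ell^1}$ the subdifferential at $u^\dagger$ is $\partial J(u^\dagger) = \set{\xi : \xi_k = \sign(u^\dagger_k) \text{ for } k\in S,\ \abs{\xi_k}\leq 1 \text{ for } k\notin S}$, where $S := \set{k : u^\dagger_k \neq 0}$ is the support, with $\card(S)\leq s$ by $s$-sparsity. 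Thus the source condition \eqref{defs:sceqn} amounts to finding $p^\dagger$ with $K^*p^\dagger \in \partial J(u^\dagger)$.

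Before constructing it, I would extract from the $s$-RIP the constant $\delta\in(0,1)$ with $(1-\delta)\norm{x}^2 \leq \norm{Kx}^2 \leq (1+\delta)\norm{x}^2$ for every $s$-sparse $x$. Since $\card(S)\leq s$, this already shows that $KP_S$ is injective and, moreover, that the restriction $K_S^*K_S$ of $K^*K$ to sequences supported on $S$ is boundedly invertible with $\norm{(K_S^*K_S)^{-1}}\leq (1-\delta)^{-1}$ (its eigenvalues lie in $[1-\delta,1+\delta]$). Writing $K_S := KP_S$, this handles the injectivity claim in advance, once I have verified that $I=S$.

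The central step is the explicit construction. Letting $\sigma = (\sign(u^\dagger_k))_{k\in S}$ denote the sign vector on the support, I would set $p^\dagger := K_S(K_S^*K_S)^{-1}\sigma$. A direct computation gives $(K^*p^\dagger)_k = \sigma_k = \sign(u^\dagger_k)$ for every $k\in S$, so $\xi = K^*p^\dagger$ matches the required sign pattern on $S$. It remains to verify the strict off-support bound $\abs{\xi_k}<1$ for $k\notin S$. This is where the full strength of the RIP enters: I would estimate $\abs{\xi_k} = \abs{\inner{Ke_k}{K_S(K_S^*K_S)^{-1}\sigma}}$ by combining the restricted orthogonality implied by the RIP with $\norm{(K_S^*K_S)^{-1}}\leq (1-\delta)^{-1}$ and $\norm{\sigma} = \sqrt{\card(S)}\leq\sqrt{s}$, arriving at a bound $\abs{\xi_k}\leq c(\delta)<1$ provided $\delta$ is below the threshold recorded in the cited \cite{GraHalSch10}.

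Granting this strict estimate, the conclusion is immediate: the index set $I = \set{k : \abs{\xi_k}=1}$ from Lemma \ref{lem:estimate_J_below} coincides with $S$, so $KP_I = KP_S$ is injective by the preceding step, the quantity $\theta = \sup_{k\notin I}\abs{\xi_k}$ is strictly smaller than one, and $K^*p^\dagger \in \partial J(u^\dagger)$ is precisely the source condition. The main obstacle is exactly the strict off-support estimate: turning the RIP into the \emph{strict} inequality $\abs{\xi_k}<1$ requires controlling the cross terms $K_{S^\complement}^*K_S(K_S^*K_S)^{-1}\sigma$ uniformly over $k\notin S$ and extracting a margin below one, rather than merely the non-strict bound $\abs{\xi_k}\leq 1$.
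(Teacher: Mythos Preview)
The paper does not actually prove this proposition: it is stated there as a citation, namely ``part of Proposition~5.3 and Theorem~4.7'' in \cite{GraHalSch10}, with no argument supplied. So there is no in-paper proof to compare your proposal against; you are in effect reconstructing the argument of the cited reference.

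Your construction is the standard least-squares dual certificate $p^\dagger = K_S(K_S^*K_S)^{-1}\sigma$, and the logic is sound: RIP on $S$ gives invertibility of $K_S^*K_S$ and injectivity of $KP_S$; the construction automatically matches the signs on $S$; and the strict off-support bound $\abs{\xi_k}<1$ then forces $I=S$, so $KP_I$ is injective and $\theta<1$. This is exactly the route taken in the compressed-sensing literature and, as far as one can tell from the citation, in \cite{GraHalSch10} itself. One small point worth tightening: the off-support estimate $\abs{\inner{Ke_k}{K_S(K_S^*K_S)^{-1}\sigma}}<1$ for $k\notin S$ involves columns indexed by $S\cup\{k\}$, a set of size $\card(S)+1$, so you need restricted isometry (or the derived restricted orthogonality) of order at least $s+1$, not merely $s$; the precise order and threshold on the RIP constant are what the cited reference pins down. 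Apart from that bookkeeping, your outline is correct and complete.
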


Based on this result  and on the ones in  this section, one can immediately state the following:

\begin{proposition}
Assume that $K$ satisfies the $s$-restricted isometry property and let $u^\dagger$ be an $s$-sparse solution of the equation. Then  linear convergence rates  hold for Bregman iterations in the noisy-free case and in the noisy data case when the discrepancy principle is employed.
\end{proposition}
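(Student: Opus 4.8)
The plan is to treat this statement as a synthesis of the two $\ell^1$-convergence results established earlier in this section together with the recalled link between the restricted isometry property (RIP) and the structural hypotheses collected in Assumption~\ref{sparse:standing_assumption}. Concretely, I would first argue that the RIP together with the $s$-sparsity of $u^\dagger$ guarantees all three items of Assumption~\ref{sparse:standing_assumption}, and then invoke Proposition~\ref{sparse:conv_noisefree} for the noise-free iteration and the preceding $\ell^1$-rate theorem under Morozov's principle for the noisy case.

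First I would verify the hypotheses. The proposition recalled just above (quoting \cite[Proposition 5.3 and Theorem 4.7]{GraHalSch10}) already supplies that $u^\dagger$ satisfies the source condition~\eqref{defs:sceqn}, i.e.~Assumption~\ref{sparse:standing_assumption}~\ref{enu:SC}), and that $KP_I$ is injective for $I = \set{k~:~\abs{\xi_k}=1}$ with $\xi=K^*p^\dagger$, i.e.~Assumption~\ref{sparse:standing_assumption}~\ref{enu:FBI}). It remains to confirm the strict sparsity pattern~\ref{enu:strict_sparsity_pattern}), namely $\theta=\sup\set{\abs{\xi_k}~:~k\notin I}<1$. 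This is precisely the non-degeneracy of the dual certificate $\xi$ off the support of $u^\dagger$, which is exactly what the RIP-based construction of the source element delivers; I would therefore fold this strict bound into the conclusion of the cited proposition, or cite the corresponding non-degeneracy estimate in \cite{GraHalSch10} directly.

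With Assumption~\ref{sparse:standing_assumption} in force, Lemma~\ref{lem:estimate_J_below} is available and the two rate results apply verbatim. For the exact-data iteration, Proposition~\ref{sparse:conv_noisefree} (taken with $p_0=0$) yields a constant $C>0$ with $\norm{u_n-u^\dagger}_{\ell^1}\leq C/t_n$, which is the asserted linear rate in the reciprocal of the cumulative step sizes $t_n$. For perturbed data $g_k$ with $\norm{g-g_k}\leq\delta_k$, stopping the iteration by Morozov's rule~\eqref{morozov:disc} and applying the preceding $\ell^1$-rate theorem gives $\norm{\Reg_{\Gamma(\delta_k,g_k)}(g_k)-u^\dagger}_{\ell^1}=\bigo(\delta_k)$, the optimal linear rate in the noise level. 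Both estimates are thus immediate consequences of results already proved.

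The single step that requires care — and the main obstacle — is the verification of~\ref{enu:strict_sparsity_pattern}). As stated, the recalled proposition provides only the source condition and the injectivity of $KP_I$; the strict inequality $\theta<1$ is an additional non-degeneracy property of the dual certificate that is not literally part of that quotation. I would therefore either strengthen the quoted consequence of the RIP — appealing to the standard construction of a non-degenerate dual certificate, for which $\abs{\xi_k}$ is bounded strictly away from one off the support — or record $\theta<1$ as an explicit standing hypothesis. Once this point is settled, no further computation is needed and the proposition follows by direct citation of Proposition~\ref{sparse:conv_noisefree} and the Morozov-stopped $\ell^1$-rate theorem.
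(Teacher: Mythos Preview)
Your proposal is correct and mirrors the paper exactly: the proposition is stated there without proof, as an immediate consequence of the preceding RIP-to-source-condition proposition combined with Proposition~\ref{sparse:conv_noisefree} and the Morozov-stopped $\ell^1$-rate theorem. Your caveat about Assumption~\ref{sparse:standing_assumption}~\ref{enu:strict_sparsity_pattern}) is well-spotted---the paper's quotation of the result from \cite{GraHalSch10} is terse on this point---but the underlying construction there does produce a non-degenerate dual certificate with $\theta<1$, so no genuine gap arises.
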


\section{Conclusion}
\label{sec:conclusion}

In this work we showed that Morozov's discrepancy principle \eqref{morozov:disc}
applied to the Augmented Lagrangian Method (ALM) \ref{intro:ala} leads to a
regularization method for linear inverse problems $Ku=g$. This gives a
theoretical justification for the observation that the discrepancy principle
provides useful results in practical situations.

We used a dual characterization of the ALM in order to derive explicit error
bounds for the Bregman distance between the iterates and a true $J$-minimizing solution
$u^\dagger$ of $Ku=g$, if $u^\dagger$ satisfies the source condition
\begin{equation*}
  K^*p^\dagger \in \partial J(u^\dagger)
\end{equation*} 
for a source element $p^\dagger$. In this case, also error bounds for the Bregman distance (with respect to $J^*$)
between the dual iterates in the ALM and $p^\dagger$ were obtained. We also
showed that a \emph{sufficient condition} for the source condition to hold is
the existence of finite accumulation points in the sequence of stopping  indices chosen by the discrepancy principle. 

We applied our general results to particular situations which have a special
appeal for problems arising in imaging. 

Firstly, we considered the case of
\emph{total variation regularization} where we were able to show that the ALM
converges \emph{strictly} in $\BV$ and to establish convergence rates with respect to  an
equivalent metric. 

Secondly, we studied \emph{sparse regularization} on
$\ell^2$, more precisely when $J$ coincides with the $\ell^q$-norm ($q\in[1,2]$). Aside to
$\sqrt{\delta}$-rates in the $\ell^q$-norm for $q>1$, we were able to prove
linear convergence rates for the particular interesting case of $\ell^1$ (under
suitable regularity conditions on $u^\dagger$). The sequence of dual iterates
in the ALM in the latter case carries important information on the support of
the solution. The conjugate function $J^*$ of the $\ell^1$-norm, however,
degenerates to an indicator function. As a consequence, the general estimates
for the dual variables do not reveal much insight in their convergence
behavior. It is still an open issue whether one can obtain more relevant estimates for the dual
variables.

\subsection*{Acknowledgments} The authors thank Martin Burger (University
of M\"unster) for stimulating discussions on the total variation
section. K.F. is supported by the DFG-SNF Research
Group FOR916 \emph{Statistical Regularization} (Z-Project). E.R. acknowledges the
support by Austrian Science Fund, project FWF V82-N118 (Elise Richter
fellowship). D.L. acknowledges support by the DFG under grant LO 1436/2-1 within
the DFG priority program SPP 1324.

%%%%%%%%%%%%%%%%%%%%%%%%%%%%%%%%%%%%%%%%%%%%%%%%%%%%%%%%%%%%%%%%%%%%%%%%%%%%%%%%%%%%%%%%%%%%%%%%%%%%%%%%%%%%%

\bibliographystyle{siam}
\bibliography{./literature}

\end{document}